\newtheorem{thm}{Theorem}[section]
\newtheorem{lem}[thm]{Lemma}
\newtheorem{prop}[thm]{Proposition}
\newtheorem{coro}[thm]{Corollary}
\title[Irreducible recurrence and extremality]
 {Irreducible recurrence, ergodicity, and extremality of invariant measures for resolvents} 
\author{Lucian Beznea, Iulian C\^impean, and Michael R\"ockner}
\begin{document}
\maketitle

\begin{abstract}
We analyze the transience, recurrence, and irreducibility properties 
of general sub-Markovian resolvents of kernels and their duals, with respect to a fixed sub-invariant measure $m$. 
We give a unifying characterization of the invariant functions, 
revealing the fact that an $L^p$-integrable function is harmonic if and only if it is harmonic with respect to the weak dual resolvent.
Our approach is based on potential theoretical techniques for resolvents in weak duality. 
We prove the equivalence between the $m$-irreducible recurrence of the resolvent  
and the extremality of $m$ in the set of all invariant measures, and we apply this result to the extremality of Gibbs states. 
We also show that our results can be applied to non-symmetric Dirichlet forms, in general and in concrete situations.
A second application is the extension of the so called {\it Fukushima ergodic theorem} for symmetric Dirichlet forms 
to the case of sub-Markovian resolvents of kernels.
\end{abstract}

\section{Introduction}

Questions on recurrence, transience and irreducibility of Markov processes were treated in various frames and with specific tools, both from probabilistic and analytic view point: see \cite{ChFu11}, 
\cite{Ge80}, \cite{Os92}, \cite{St94}, \cite{Fu07}, \cite{FuOsTa11}, and \cite{MaUeWa12} for continuous time processes, as well as \cite{MeTw93} and \cite{No97} for Markov chains, and the references therein.

The purpose of this paper is twofold: first, we aim to clarify the connection between different definitions for transience, recurrence, and irreducibility, and unify various characterizations of these notions. 
Second, we want to analyze whether transience, recurrence, and irreducibility are stable when passing to the dual structure, i.e. the dual Markov process or the dual resolvent, respectively, with the underlying measure being a sub-invariant measure for the initial resolvent.
On the way, we also obtain a number of new results on the subject, based on potential theoretical techniques.

Motivated by relevant examples arising mainly in infinite dimensional settings, 
we present here an approach to this subject in an $L^p$-context,
for sub-Markovian resolvents. It turns out to be a unifying method, 
in particular, revealing  applications to invariant and Gibbs measures.

The structure and main results of this paper are as follows. 

In the first part of Section 2 we study different characterizations of transience, recurrence, and irreducibility of a sub-Markovian resolvent of kernels $\mathcal{U}$ on a Lusin measurable space $E$ with respect to a $\sigma$-finite sub-invariant measure $m$. 
We emphasize that  we do not assume any continuity of the resolvent and our proofs rely on the weak duality for the resolvent $\mathcal{U}$, and corresponding potential theoretical techniques, see (\ref{eq 2.1}) below, which is in contrast to the ones in \cite{Fu07} and \cite{FuOsTa11}, where main ingredients are {\it Hopf's maximal inequality} and the continuity of the transition function. 
When $\mathcal{U}$ is the resolvent of a right process, we show that $m$-transience and $m$-irreducible recurrence are respectively equivalent with the transience and recurrence of the process in the stronger sense of \cite{Ge80}, outside some $m${\it -inessential} set; see Propositions \ref{prop 2.1.14} and \ref{prop 2.1.18}.
This probabilistic counterpart was  studied in \cite{FuOsTa11} for $m$-symmetric Hunt processes.
Then, we give a characterization for invariant functions in $L^p(E, m)$, $1 \leq p \leq \infty$, unifying the approaches from stochastic processes, Dirichlet forms, positivity preserving semigroups, and ergodic theory (see Theorem \ref{thm 2.19}). 
Our results also cover and extend the ones in \cite{Sc04}, 
and we shall use it in Sections 4 and 5 to prove the equivalence of irreducibility and extremality 
of invariant measures, resp. extremality of Gibbs states. 
A second consequence of Theorem \ref{thm 2.19} states that an element $u$ from the kernel of the generator of an $L^p$-strongly continuous sub-Markovian resolvent of contractive operators also belongs to the kernel of the co-generator on $L^p$ induced by weak duality; see Corollary \ref{prop 3} and the discussion at the beginning of Section 2.
In Section 3 we apply the results of the previous one to prove the equivalence of irreducible recurrence and ergodicity, as stated in Proposition \ref{prop 3.5}, of a sub-Markovian resolvent of kernels with respect to a sub-invariant $\sigma$-finite measure, extending the so called {\it Fukushima ergodic theorem} for a (quasi)regular Dirichlet form; see \cite{FuOsTa11}, Theorem 4.7.3 and \cite{AlKoRo97a}, Theorem 4.6.
The key ingredient is Theorem \ref{thm 3.1} which states the strong convergence of an $L^p$-uniformly bounded resolvent family of continuous operators $(\alpha U_\alpha)_{\alpha > 0}$ to the projection on the kernel of $\mathcal{I} - \beta U_\beta$, as $\alpha$ tends to $0$, for one (hence for all) $\beta > 0$. 

The central result of Section 4 is Theorem \ref{thm 2.26} which states that the sub-Markovian resolvent of kernels $\mathcal{U}$ 
is $m$-recurrent and $m$-irreducible if and only if the measure $m$ is extremal in the set of all invariant probability measures for $\mathcal{U}$. This extends results from \cite{AlKoRo97a}, \cite{AlKoRo97b}, and \cite{DaZa96}, Section 3.1, concerning  the ergodicity  and extremality of invariant measures.

In Section 5 we apply the obtained results on irreducibility and extremality of invariant measures to the context of (non-symmetric) Dirichlet forms.
In Corollary \ref{coro 5.3} we give a characterization for the irreducibility of a  Dirichlet form. 
It improves the one in \cite{AlKoRo97a}, Proposition 2.3, where the forms are symmetric, recurrent, and given by a square field operator. 
We would like to point out another consequence of Corollary \ref{coro 5.3}, namely that both the recurrence and the irreducibility of a strongly sectorial (non-symmetric) Dirichlet form is equivalent to the respective property of its symmetric part. We illustrate this by a concrete example in infinite dimensions (see Corollary \ref{coro 5.8}).

The main results of this last section are given in a subsection on the extremality of Gibbs states. Recall that  in \cite{AlKoRo97a} the authors 
extend  classical results  of Holley and Stroock for the  Ising model, proving that a Gibbs state is extremal if and only if the corresponding Dirichlet form is irreducible (or equivalently ergodic), for classes of lattice models with non-compact, but linear spin space. 
In particular, numerous examples of irreducible Dirichlet forms on infinite dimensional state space are obtained. 
For applications to more general models see \cite{AlKoRo97b}. 
Our purpose is to recapture two of the main results in \cite{AlKoRo97a} as particular cases of Theorem \ref{thm 2.26} and thus to place the problem in a broader context. 
The key point is Theorem \ref{thm 5.5}, according to which the space of Gibbs measures which are absolutely continuous with respect to a fixed Gibbs measure $m$ coincide with the space of all $\mathcal{U}$-invariant probability measures which are absolutely continuous with respect to $m$; here, $\mathcal{U}$ is the resolvent of the Dirichlet form. 
Theorem \ref{thm 5.6} is the main result on the equivalence between the extremality of Gibbs states and irreducibility of the corresponding Dirichlet form.

In order to give a better overview of the paper, we summarize its structure and the main results in the following two diagrams.

\newgeometry{margin=0.3 cm}

\begin{landscape}

\begin{center}

\includegraphics[scale=0.8]{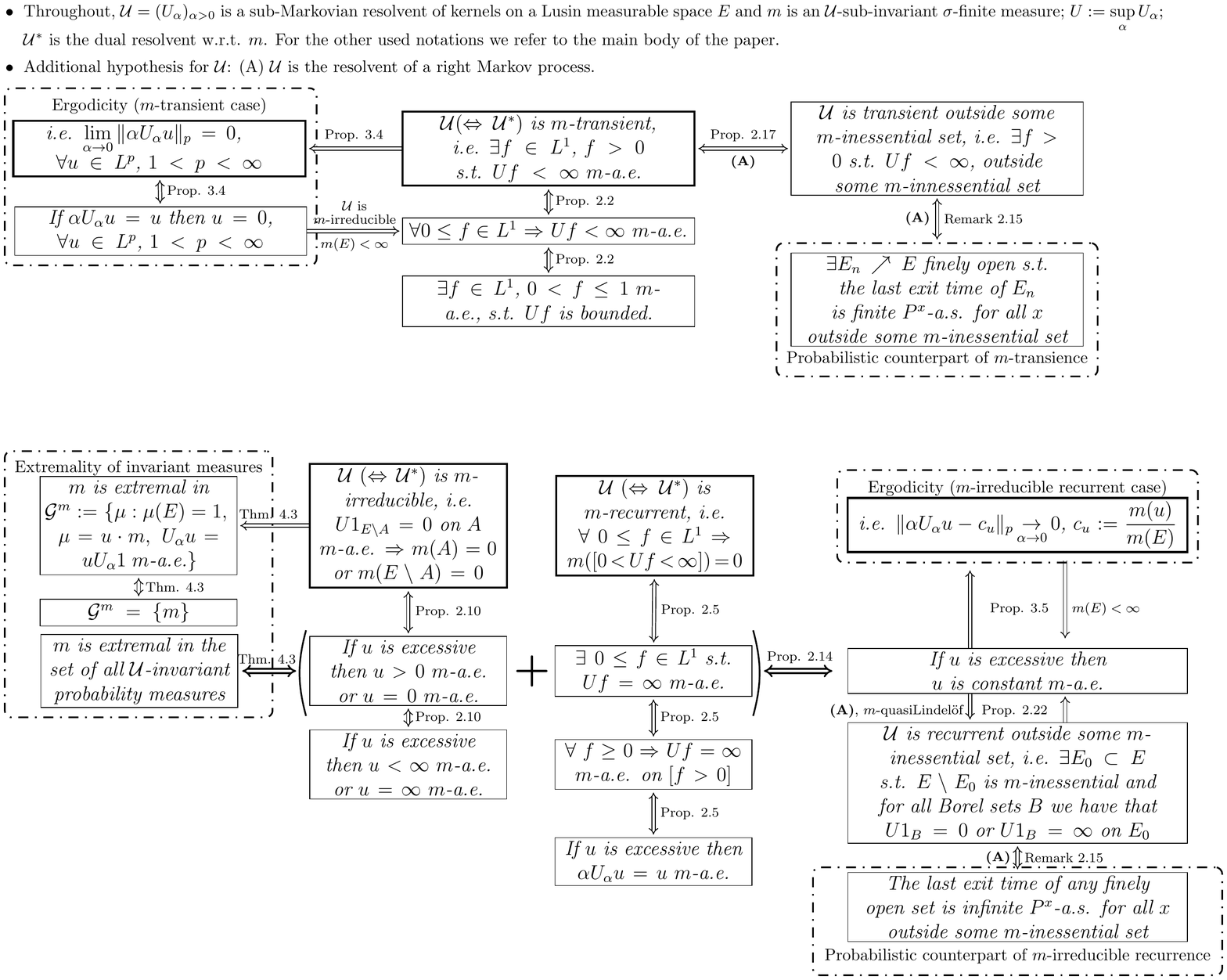}

\end{center}

\end{landscape}

\restoregeometry

Finally, we point out that the situation where we are given a sub-Markovian resolvent of contractive operators $\mathcal{V}=(V_\alpha)_{\alpha > 0}$ on $L^p(E, m)$, $p \in [1, \infty)$, where $m$ is a sub-invariant measure on $E$, is covered by our framework, since one can always construct a sub-Markovian resolvent of kernels $\mathcal{U}= (U_\alpha)_{\alpha > 0}$ on $(E, \mathcal{B})$ such that $U_\alpha = V_\alpha$, as operators on $L^p(E,m)$ for all $\alpha > 0$. We give more details on this in the beginning of the next section.

\section{Transience, recurrence, and irreducibility of a sub-Markovian resolvent of kernels}

\noindent
{\bf Preliminaries on resolvents of kernels, $L^p$-resolvents, and duality.}
Throughout we follow the terminology of \cite{BeBo04}. Let $(E, \mathcal{B})$ 
be a Lusin measurable space and $\mathcal{U} = (U_\alpha)_{\alpha > 0}$ be a sub-Markovian resolvent of kernels on $(E, \mathcal{B})$.  
Throughout, we denote by $p\mathcal{B}$ the space of all positive $\mathcal{B}$-measurable functions defined on $E$.
The {\it initial kernel} of $\mathcal{U}$ is defined as $U := \sup\limits_{\alpha \geq 0} U_\alpha = \sup\limits_{n} U_{\frac{1}{n}}$.  
Recall that a function $u \in p\mathcal{B}$ is called {\it $\mathcal{U}$-supermedian} if $\alpha U_{\alpha} u \leq u$ for all $\alpha > 0$; 
it is called {\it $\mathcal{U}$-excessive} if it is $\mathcal{U}$-supermedian and $\alpha U_{\alpha} u \nearrow u$ as $\alpha$ tends to infinity. 
We  denote by $\mathcal{S}(\mathcal{U})$ and $\mathcal{E}(\mathcal{U})$ the sets of all $\mathcal{U}$-supermedian (resp. $\mathcal{U}$-excessive) functions. 
A $\sigma$-finite measure $\mu$ is called {\it sub-invariant}  w.r.t.  $\mathcal{U}$ if $\mu \circ \alpha U_{\alpha} \leq \mu$, $\alpha > 0$.
For $\beta > 0$ we denote by $\mathcal{U}_{\beta}$ the $\beta$-order sub-Markovian resolvent of kernels associated to $\mathcal{U}$, that is $\mathcal{U}_{\beta} := (U_{\beta + \alpha})_{\alpha > 0}$.

If $m$ is a $\sigma$-finite sub-invariant measure, then by \cite{BeBo04}, Theorem 1.4.14, 
there exists a second sub-Markovian resolvent of kernels $\mathcal{U}^\ast = (U^\ast_\alpha)_{\alpha > 0}$ on $(E, \mathcal{B})$ such that 
\begin{equation} \label{eq 2.1}
\mathop{\int}\limits_{E} f U_{\alpha}g dm = \mathop{\int}\limits_{E} g U_{\alpha}^{\ast} f dm \; \mbox{for all} \; f,g \in p\mathcal{B} \; {\rm and} \; \alpha > 0.
\end{equation}
Such a sub-Markovian resolvent is uniquely determined $m$-a.e. and it is called the {\it adjoint} of $\mathcal{U}$ w.r.t. $m$. 

Using H\"older inequality and extending by linearity one can easily check that $\mathcal{U}$ 
becomes a sub-Markovian family of contractive operators on $L^p(E,m)$ for all $1 \leq p < \infty$. 
Conversely, if $\mathcal{V}:=(V_{\alpha})_{\alpha > 0}$ is a sub-Markovian 
resolvent of contractive operators on $L^p(E, m)$ for some $p \in [1, \infty)$, 
where $m$ is a $\sigma$-finite measure on $(E, \mathcal{B})$ such that $\int \alpha V_{\alpha}f dm \leq \int f dm$ for all $\alpha > 0$ and $f \in p\mathcal{B} \cap L^p(E, m)$, then 
by \cite{BeBo04}, Proposition 1.4.13 and Lemma A.1.9, 
there exist two sub-Markovian resolvents of kernels $\mathcal{U} = (U_{\alpha})_{\alpha >0}$ and 
$\mathcal{U}^{\ast} = (U_{\alpha}^{\ast})_{\alpha > 0}$ on $(E, \mathcal{B})$ such that:

$a)$ $U_{\alpha}=V_{\alpha}$ as operators on $L^p(E,m)$ for all $\alpha>0$;

$b)$ \; $\mathcal{U}$ and $\mathcal{U}^{\ast}$ are in {\it weak} duality with respect to $m$, that is (\ref{eq 2.1}) holds.

\noindent
Moreover, if $\mathcal{V}$ is strongly continuous, i.e. $\lim\limits_{\alpha \to \infty} \| \alpha U_\alpha f - f\|_{L^p}=0$ all $f \in L^p(E,m)$, 
$1 \leq p < \infty$, then by \cite{BeBoRo06a}, Remark 2.3, and Corollary 2.4, we also have

$c)$ \; $1 \in \mathcal{E}(\mathcal{U}_{\beta})\cap\mathcal{E}(\mathcal{U}^{\ast}_{\beta})$, 
$\sigma(\mathcal{E}(\mathcal{U}_{\beta})) = \mathcal{B} = \sigma(\mathcal{E}(\mathcal{U}^{\ast}_{\beta}))$;

$d)$ \; Every point of $E$ is a non-branch point for $\mathcal{U}$ and $\mathcal{U}^{\ast}$.\\

We note that, as in \cite{MaRo92}, Chapter II, Proposition 4.3, the strong continuity of $\mathcal{V}$ for $1 \leq p < \infty$ 
is satisfied if one can find a dense subset $D \subset L^p(E,m)$ such that $\alpha V_\alpha f \mathop{\longrightarrow}\limits_{\alpha \to \infty} f$ in $m$-measure for all $f \in D$.
A second approach to strongly continuity is given by the next result, 
for which we refer to \cite{BeBoRo06a}, Remark 2.3, and Corollary 2.4 and \cite{BeBo04}, Subsection 7.5.

\begin{prop} \label{prop 2.0} 
Let $\mathcal{U}$ be a sub-Markovian resolvent of kernels on $(E, \mathcal{B})$. 
If $\mathcal{E}(\mathcal{U}_\beta)$ is min-stable, contains the positive constant functions and generates $\mathcal{B}$ then $\mathcal{U}$ becomes a strongly continuous sub-Markovian resolvent of contractive operators on $L^p(E, m)$ for every $\sigma$-finite sub-invariant measure $m$ and $1 \leq p < \infty$.
\end{prop}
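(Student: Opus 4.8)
The plan is as follows. First, the contractivity part needs no work: since $m$ is sub-invariant, the computation with H\"older's inequality recalled above already shows that $\mathcal{U}$ acts on $L^p(E,m)$, $1\le p<\infty$, as a sub-Markovian resolvent of contractions, so the only thing to prove is the strong continuity $\|\alpha U_\alpha f-f\|_{L^p}\to 0$ as $\alpha\to\infty$ for every $f\in L^p(E,m)$. Because the operators $\alpha U_\alpha$ are uniformly bounded (they are contractions), it suffices to establish this for $f$ ranging over a set whose linear span is dense in $L^p(E,m)$. Moreover, the resolvent equation gives $\alpha U_\alpha-\alpha U_{\beta+\alpha}=\beta\,(\alpha U_\alpha)\,U_{\beta+\alpha}$, whence $\|\alpha U_\alpha f-\alpha U_{\beta+\alpha}f\|_{L^p}\le\beta\,\|U_{\beta+\alpha}f\|_{L^p}\le\frac{\beta}{\beta+\alpha}\|f\|_{L^p}\to 0$, so it is enough to prove $\|\alpha U_{\beta+\alpha}f-f\|_{L^p}\to 0$ for $f$ in such a dense set.

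Such a set is furnished by the bounded $\mathcal{U}_\beta$-excessive functions lying in $L^p(E,m)$: if $u\in\mathcal{E}(\mathcal{U}_\beta)$ is bounded and $u\in L^p(E,m)$, then $\alpha U_{\beta+\alpha}u\nearrow u$ by the definition of excessiveness, so $\|\alpha U_{\beta+\alpha}u-u\|_{L^p}\to 0$ by dominated convergence. To see that there are enough of them, fix $h\in L^1(E,m)\cap L^\infty$ with $h>0$ (possible since $m$ is $\sigma$-finite) and set $g_0:=U_\beta h$. Then $g_0$ is $\mathcal{U}_\beta$-excessive (since $\alpha U_{\beta+\alpha}g_0=g_0-U_{\beta+\alpha}h\nearrow g_0$), bounded ($\beta g_0\le\|h\|_\infty\,\beta U_\beta 1\le\|h\|_\infty$), and $m$-integrable ($\int g_0\,dm\le\beta^{-1}\int\beta U_\beta h\,dm\le\beta^{-1}\int h\,dm$ by sub-invariance), hence $g_0\in L^p(E,m)$; moreover $g_0>0$ everywhere, because the inclusion of the positive constants in $\mathcal{E}(\mathcal{U}_\beta)$ forces $\alpha U_{\beta+\alpha}1\nearrow 1$, so $U_\beta(x,\cdot)\ne 0$ for every $x\in E$. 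Now for arbitrary $u\in\mathcal{E}(\mathcal{U}_\beta)$, min-stability of $\mathcal{E}(\mathcal{U}_\beta)$, together with the fact that positive multiples of excessive functions are excessive, shows that $u\wedge n g_0\in\mathcal{E}(\mathcal{U}_\beta)$ is bounded, belongs to $L^p(E,m)$, and increases to $u$ as $n\to\infty$. Consequently the convex cone $\mathcal{C}:=\mathcal{E}(\mathcal{U}_\beta)\cap L^p(E,m)$ is stable under addition and under $\wedge$, contains the strictly positive function $g_0$, and generates $\mathcal{B}$ (every element of $\mathcal{E}(\mathcal{U}_\beta)$ being an increasing pointwise limit from $\mathcal{C}$).

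What remains — and this is the analytic core, the step I expect to be the main obstacle — is to deduce from these properties that the linear span of $\mathcal{C}$ is dense in $L^p(E,m)$; granting this, the two reductions of the first paragraph complete the proof. I would argue it by Hahn--Banach: a nonzero element $\varphi$ of the dual of $L^p(E,m)$ annihilating $\mathrm{span}\,\mathcal{C}$ satisfies $\int u\,\varphi\,dm=0$ for all $u\in\mathcal{C}$; testing against $u=U_\beta g$ with $g\in L^1(E,m)\cap L^\infty$, $g\ge 0$, and invoking the weak duality (\ref{eq 2.1}) with respect to $m$, this forces $U_\beta^\ast\varphi=0$ $m$-a.e., and hence $\varphi=0$, the point being that $\mathcal{C}-\mathcal{C}$ is a vector lattice which contains a strictly positive function and generates $\mathcal{B}$ modulo $m$-negligible sets, so that a monotone-class argument applies. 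Turning this outline into a fully rigorous proof — carrying out the lattice/monotone-class step and controlling the negligible sets in the required generality — is precisely what is done in \cite{BeBo04}, Subsection 7.5, together with \cite{BeBoRo06a}, Remark 2.3 and Corollary 2.4, on which I would rely for this last step.
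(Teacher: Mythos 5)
Your preliminary reductions are all correct and useful: contractivity from sub-invariance, reduction to a dense set by uniform boundedness of $(\alpha U_\alpha)_{\alpha>0}$, reduction to the $\beta$-order resolvent via the resolvent equation, $L^p$-convergence of $\alpha U_{\beta+\alpha}u$ for bounded $u\in\mathcal{E}(\mathcal{U}_\beta)\cap L^p(E,m)$, and the construction of the strictly positive, bounded, integrable excessive function $g_0=U_\beta h$ together with the truncations $u\wedge n g_0$. Note also that the paper itself gives no proof of Proposition \ref{prop 2.0}; it refers to \cite{BeBoRo06a}, Remark 2.3 and Corollary 2.4, and \cite{BeBo04}, Subsection 7.5, so your reliance on those references for the remaining step mirrors the paper's treatment.

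However, the one step you sketch yourself, the density of ${\rm span}\,\mathcal{C}$ in $L^p(E,m)$, is exactly where the hypotheses must do their work, and your sketch does not close it. The Hahn--Banach argument is circular: since every bounded $u\in\mathcal{E}(\mathcal{U}_\beta)\cap L^p$ is an increasing, dominated (hence $L^p$-convergent) limit of potentials, namely $\alpha U_{\beta+\alpha}u=U_\beta\bigl(\alpha(u-\alpha U_{\beta+\alpha}u)\bigr)$, and conversely $U_\beta g$ is approximated by $(U_\beta g)\wedge n g_0\in\mathcal{C}$, density of ${\rm span}\,\mathcal{C}$ is equivalent to density of the range of $U_\beta$ in $L^p$, i.e.\ to injectivity of $U_\beta^{\ast}$ on $L^{p'}$ --- which, for a contraction resolvent, is in turn equivalent to the strong continuity being proved. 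So the deduction ``$U_\beta^{\ast}\varphi=0$ $m$-a.e., hence $\varphi=0$'' assumes the conclusion. Moreover, the principle you invoke to justify it --- that a vector sublattice of $L^p$ containing a strictly positive function and generating $\mathcal{B}$ has dense span --- is false in general: on a two-point space with counting measure, the line spanned by $(1,2)$ is a vector lattice, contains a strictly positive function and generates the whole $\sigma$-algebra, yet is one-dimensional. What rescues the present situation is the truncation (Stone) property $u\wedge c\in\mathcal{C}$ for constants $c>0$, which is precisely where ``min-stable and contains the positive constants'' enters; with it one runs a genuine monotone-class argument weighted by $g_0$ (e.g.\ showing $g_0 1_A$ lies in the closed span for $A$ in the generating $\pi$-system of sets $[u>c\,g_0]$, then a Dynkin-system argument). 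That argument, not the Hahn--Banach step, is the analytic core, and it is the content of the cited results in \cite{BeBo04} and \cite{BeBoRo06a}; as an independent proof your proposal has a genuine gap at exactly this point, although the reduction to it is sound.
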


We would also like to stress out that if we deal with a strongly continuous sub-Markovian resolvent of contractions $\mathcal{U}$ on $L^p(E,m)$ then one can always find a larger Lusin topological space $E \subset E_1$, $E \in \mathcal{B}(E_1)$, $\mathcal{B} = \mathcal{B}(E_1)|_E$, 
and an $E_1$-valued right Markov process such that its resolvent $\mathcal{U}^1$ regarded on $L^p(E,\overline{m})$, 
coincides with $\mathcal{U}$ and $U^1_\alpha(1_{E_1 \setminus E}) = 0$, 
where $\overline{m}$ is the measure on $(E_1, \mathcal{B}(E_1))$ extending $m$ by zero on $E_1 \setminus E$; see \cite{BeBoRo06a}, Theorem 2.2, 
and also \cite{BeRo15} for the extension to $E_1$ of the adjoint  resolvent.
Taking into account that the properties of $m$-transience, $m$-recurrence, and $m$-irreducibility are preserved by modifying the initial space with some zero measure set, the results presented in this paper have a probabilistic counterpart.

As mentioned in Introduction, due to the above remarks 
we can assume without loss of generality that $\mathcal{U}$ is a sub-Markovian resolvent of kernels on $(E, \mathcal{B})$. 
We also fix a $\sigma$-finite sub-invariant measure $m$.

\begin{defi*}[(cf. \cite{Fu07})] 
i) The resolvent $\mathcal{U}$ is called {\it $m$-transient} provided there exists $f \in p\mathcal{B} \cap L^1(E, m)$ such that $m$-a.e. we have $f >0$ and $U f < \infty$.

\vspace{0.2cm}

ii) The resolvent $\mathcal{U}$ is called {\it $m$-recurrent} if for every $f \in p\mathcal{B}\cap L^1(E, m)$ we have $m(\{ x \in E : 0 < Uf(x) < \infty \}) = 0$.
\end{defi*}

\begin{prop} \label{prop2.1} 
The following assertions are equivalent

i) $\mathcal{U}$ is $m$-transient.

ii) $\mathcal{U}^{\ast}$ is $m$-transient.

iii) There exists $f_0 \in p\mathcal{B} \cap L^1(E, m)$, $0 < f_0 \leq 1$ $m$-a.e., such that $U f_0$ is bounded.

iv) For every $f \in p\mathcal{B} \cap L^1(E, m)$ we have $U f < \infty$ $m$-a.e.
\end{prop}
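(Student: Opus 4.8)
The plan is to prove the cycle of implications $(iii) \Rightarrow (iv) \Rightarrow (i) \Rightarrow$ [$(iii)$ and $(ii)$], together with the symmetric statement for $\mathcal{U}^\ast$, exploiting the weak duality (\ref{eq 2.1}) to pass between $\mathcal{U}$ and $\mathcal{U}^\ast$. The implications $(iii)\Rightarrow(iv)$ and $(iv)\Rightarrow(i)$ are the routine ones: $(iii)\Rightarrow(iv)$ follows because any $f\in p\mathcal{B}\cap L^1(E,m)$ satisfies $f \le c f_0$ on $\{f_0 \ge \varepsilon\}$ for suitable constants and, using $\sigma$-finiteness together with sub-invariance to control the set $\{f_0 = 0\}$ (which is $m$-negligible by hypothesis), one gets $Uf<\infty$ $m$-a.e.; and $(iv)\Rightarrow(i)$ is immediate by picking any strictly positive $f\in L^1(E,m)$, which exists since $m$ is $\sigma$-finite.

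The substantive step is $(i)\Rightarrow(iii)$, i.e. upgrading an integrable $f$ with $Uf<\infty$ $m$-a.e.\ and $f>0$ $m$-a.e.\ to an integrable $f_0$ with $Uf_0$ \emph{bounded}. Here I would use the standard potential-theoretic reduction: consider the $\mathcal{U}$-excessive function $v := Uf$, which is finite $m$-a.e., and replace $f$ by $f_0 := f \cdot \mathbf{1}_{\{v \le n\}} \wedge (\text{something})$ on the increasing sequence of sets $\{v \le n\}$; more precisely one takes a function of the form $f_0 = g f$ with $0<g\le 1$ chosen so that $Uf_0 = U(gf)$ is bounded — for instance using the resolvent identity and the fact that on $\{v<\infty\}$ one can extract, via the excessiveness of $v$ and a Borel exhaustion of $\{v<\infty\}$ by sets where $v$ is bounded, a strictly positive integrable minorant whose potential stays bounded. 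The key technical tool is that $\{Uf < \infty\}$ is absorbing-like for the potential kernel in the appropriate sense, so truncating $f$ to live on $\{Uf\le n\}$ does not destroy integrability (shrink further by an $L^1$-normalization if needed) while forcing the potential of the truncation below $n$; the $0<f_0\le 1$ normalization is then just rescaling.

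For the duality part $(i)\Leftrightarrow(ii)$ — equivalently $(iii)$ for $\mathcal{U}$ iff $(iii)$ for $\mathcal{U}^\ast$ — I would argue as follows. If $Uf_0$ is bounded with $0<f_0\le 1$ in $L^1(m)$, then for any $h\in p\mathcal{B}\cap L^1(m)$ we have by (\ref{eq 2.1}) (applied to $U_\alpha$ and passed to the limit $\alpha\downarrow 0$ via monotone convergence, since $U=\sup_\alpha U_\alpha$) that $\int (U^\ast h)\, f_0\, dm = \int h\, (U f_0)\, dm \le \|Uf_0\|_\infty \int h\, dm < \infty$, so $U^\ast h < \infty$ $f_0\,dm$-a.e., hence $m$-a.e.\ since $f_0>0$ $m$-a.e. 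This gives property $(iv)$ for $\mathcal{U}^\ast$, hence $(i)$ for $\mathcal{U}^\ast$ by the equivalences already established for the adjoint resolvent; the converse is symmetric.

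\textbf{The main obstacle} I anticipate is the rigorous execution of $(i)\Rightarrow(iii)$: passing from "$Uf$ finite a.e." to "$Uf_0$ bounded" requires carefully choosing the truncation/multiplier $g$ and verifying that the resulting potential is genuinely bounded rather than merely finite a.e., which is where one must lean on the structure of $\mathcal{U}$-excessive functions (the initial kernel $U$ and monotone approximation by $U_{1/n}$) and on $\sigma$-finiteness of $m$ to do the exhaustion. Everything else is a matter of bookkeeping with (\ref{eq 2.1}) and monotone convergence.
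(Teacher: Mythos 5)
Your overall architecture is the paper's (truncate on the sublevel sets of $Uf$, then transfer to $\mathcal{U}^{*}$ by weak duality, with iv) $\Rightarrow$ i) trivial), but the proposal has a genuine gap exactly at its crux, the implication i) $\Rightarrow$ iii), which you yourself flag as "the main obstacle" and do not resolve. Setting $A_n:=\{Uf\le n\}$, the truncation $f1_{A_n}$ satisfies $U(f1_{A_n})\le Uf\le n$ \emph{on} $A_n$, but off $A_n$ all you know a priori is $U(f1_{A_n})\le Uf$, which is merely finite $m$-a.e.; none of the tools you invoke (resolvent identity, excessiveness of $v=Uf$, an "absorbing-like" property of $\{Uf<\infty\}$) upgrades this to a global bound. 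The missing ingredient is the complete maximum principle for the initial kernel $U$ of a sub-Markovian resolvent: since the potential $U(f1_{A_n})$ is dominated by the constant $n$ on $\{f1_{A_n}>0\}\subset A_n$, it is $\le n$ everywhere. That is precisely how the paper argues; one then takes $f_0:=\inf\bigl(1,\sum_{n\ge 1}\tfrac{1}{n2^n}f1_{A_n}\bigr)$, so that $0<f_0\le 1$ $m$-a.e. and $Uf_0\le 1$ on all of $E$. Without this principle your construction only yields $Uf_0<\infty$ $m$-a.e., which is not assertion iii).

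A second, smaller problem: the direct argument you sketch for iii) $\Rightarrow$ iv) does not work. An arbitrary $f\in p\mathcal{B}\cap L^1(E,m)$ need not satisfy $f\le c f_0$ on $\{f_0\ge\varepsilon\}$ (it may be unbounded there), and even for bounded $f$ the exhaustion by $\{f_0\ge 1/n\}$ only gives $U(f1_{\{f_0\ge 1/n\}})\le n\|f\|_\infty\|Uf_0\|_\infty$ for each $n$, a bound that blows up, so you cannot pass to $Uf$. The correct route — which is the paper's and is in fact already contained in your duality paragraph, whose computation $\int g_0\,Uf\,dm=\int f\,U^{*}g_0\,dm<\infty$ with $g_0>0$ $m$-a.e. is right — is to obtain iv) for $\mathcal{U}$ from iii) for $\mathcal{U}^{*}$. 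So the equivalences close as i) $\Rightarrow$ iii) $\Rightarrow$ [iv) for $\mathcal{U}^{*}$] $\Rightarrow$ ii), together with the mirrored chain and the trivial implications iii) $\Rightarrow$ i), iv) $\Rightarrow$ i); the direct iii) $\Rightarrow$ iv) step should simply be deleted. With that repair the logical skeleton is fine, but as written the proposal is incomplete because i) $\Rightarrow$ iii) is not actually proved.
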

\begin{proof}
i) $\Rightarrow$ iii) Let $f \in p\mathcal{B} \cap L^1(E, m)$ be such that $m$-a.e. $f > 0$ and $U f < \infty$. 
For every $n \in \mathbb{N}^{\ast}$ let us put
$A_n := \{ x \in E : Uf(x) \leq n \}.$
Then clearly $m(E \setminus \mathop{\bigcup}\limits_{n} A_n) = 0$ and by the complete maximum principle we get
$U(f\cdot 1_{A_n}) \leq n.$
If we put
$f_0 := \inf (1, \mathop{\sum}\limits_{n = 1}^{\infty} \dfrac{1}{n\cdot 2^n} f \cdot 1_{A_n})
$
then $0 < f_0 \leq 1$ $m$-a.e. and $U f_0 \leq 1$ on $E$.

ii) $\Rightarrow$ iv) 
Applying i) $\Rightarrow$ iii) for $\mathcal{U}^{\ast}$ we get a function $g_0 \in p\mathcal{B} \cap L^1(E, m)$, $0 < g_0 \leq 1$ $m$-a.e. such that $U^{\ast}g_0$ is bounded.
If $f \in p\mathcal{B} \cap L^1(E, m)$ then $\int g_0 U f dm = \int fU^{\ast}g_0 dm < \infty$. Since $g_0 >0$ $m$-a.e. we deduce that $U f < \infty$ $m$-a.e.

The implication iv) $\Rightarrow$ i) is trivial and therefore we have i) $\Leftrightarrow$ ii).
\end{proof}

\begin{rem}  
a) For a different proof of equivalence i) $\Leftrightarrow$ iv), which makes heavy use of Hopf's maximal inequality and the continuity of the transition function, see \cite{Fu07}, Proposition 1.1, i) and \cite{FuOsTa11}, Lemma 1.5.1. 

b) If $\mathcal{U}$ is the resolvent of a right process and $m$ is a $\sigma$-finite sub-invariant measure, then by \cite{Fu07}, Section 3.1, $\mathcal{U}$ is $m$-transient if and only if there exists a sequence of Borel finely open sets $(B_n)_{n \geq 1}$ increasing to $E$ such that q.e. in $x \in E$ (i.e. for all $x \in E$ outside some $m$-polar set) the last exit time of $B_n$ is finite $P_x$-a.e. In particular, if q.e. in $x \in E$ the process has finite lifetime $P_x$-a.e., then $\mathcal{U}$ is $m$-transient.
\end{rem}

For the reader convenience we present the proof of the following essentially known result (see e.g. \cite{BeBo04}, Proposition 1.1.11).

\begin{prop} \label{prop 2.2} 
Let $v \in p\mathcal{B}$ be such that
$\alpha U_{\alpha} v \leq v \; m\mbox{-a.e. for all} \; \; \; \alpha > 0.$
Then there exists an $\mathcal{U}$-excessive function $v'$ such that
\[
v = v' \; m\mbox{-a.e.}
\]
Moreover, if $v$ is bounded then $v'$ can be chosen bounded. 
\end{prop}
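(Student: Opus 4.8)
The plan is to regularize $v$ by applying the resolvent and then take a limit. Set $v_n := n U_n v$; since $v \in p\mathcal{B}$, each $v_n$ is a well-defined positive $\mathcal{B}$-measurable function. The key computation is to show that the sequence $(n U_n v)_n$ is $m$-a.e. increasing in $n$: using the resolvent equation $U_\alpha - U_\beta = (\beta - \alpha) U_\alpha U_\beta$ together with the hypothesis $\alpha U_\alpha v \le v$ $m$-a.e., one checks in the standard way that $\alpha U_\alpha(\beta U_\beta v) \le \beta U_\beta v$ $m$-a.e. for all $\alpha,\beta>0$, and hence that $\alpha \mapsto \alpha U_\alpha v$ is nondecreasing $m$-a.e. (the monotonicity of $\alpha U_\alpha$ applied to a supermedian function). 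Therefore $v' := \sup_n n U_n v = \lim_n n U_n v$ exists pointwise on $E$ (as an increasing sup of measurable functions, with no exceptional set), is $\mathcal{B}$-measurable, and satisfies $v' \le v$ $m$-a.e.; and since each $n U_n v \le v$ $m$-a.e. we also get $v' \le v$ $m$-a.e., while conversely $\alpha U_\alpha v \nearrow v$ forces... — more precisely, from $v' \ge n U_n v$ and $n U_n v \to v$ in the $m$-a.e. increasing sense we get $v' = v$ $m$-a.e.

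Next I would verify that $v'$ is genuinely $\mathcal{U}$-excessive, i.e. $\alpha U_\alpha v' \le v'$ for all $\alpha>0$ and $\alpha U_\alpha v' \nearrow v'$ as $\alpha \to \infty$, as an honest pointwise inequality on all of $E$, not just $m$-a.e. For supermedianity: $\alpha U_\alpha v' = \alpha U_\alpha(\sup_n n U_n v) = \sup_n \alpha U_\alpha(n U_n v)$ by monotone convergence for the kernel $U_\alpha$, and each term satisfies $\alpha U_\alpha(n U_n v) = n U_n(\alpha U_\alpha v) \le n U_n v \le v'$ $m$-a.e.; but here I must be careful — I want a true pointwise bound. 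The cleaner route is to note $\alpha U_\alpha(n U_n v) \le \sup_k k U_k(n U_n v)$ and use that $n U_n v$ is itself supermedian-up-to-$m$-null-sets, so $\sup_k k U_k (nU_n v) \le$ the excessive regularization of $n U_n v$; iterating, one sees $v'$ is a fixed point. Concretely, the slick argument: $v'=\sup_n nU_nv$ and one shows directly $\alpha U_\alpha v'\le v'$ using $\alpha U_\alpha U_n v \le \frac{n}{n-\alpha}U_n v$-type identities from the resolvent equation — I would instead just cite the structure of \cite{BeBo04}, Proposition 1.1.11, that the supremum of $(nU_nv)_n$ for $v$ supermedian $m$-a.e. is automatically excessive. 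For the limit $\alpha U_\alpha v' \nearrow v'$: since $v' = \sup_n n U_n v$ and the family is increasing, $\beta U_\beta v' = \sup_\beta \beta U_\beta(\sup_n nU_n v) \ge \sup_n n U_n v = v'$ combined with supermedianity gives equality of the sup, hence convergence.

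For the boundedness addendum: if $v \le M$ $m$-a.e., then each $n U_n v \le n U_n(M\cdot 1) = M \cdot nU_n 1 \le M$ pointwise on $E$ by sub-Markovianity (here $nU_n v \le nU_n(M)$ needs $v \le M$ $m$-a.e. to pass through $U_n$, which is fine since $U_n$ respects $m$-a.e. inequalities when $m$ is sub-invariant... — actually to get a pointwise bound on $v'$ one can argue: $v'=v$ $m$-a.e. $\le M$ $m$-a.e., and $v'$ excessive with $v'\le M$ $m$-a.e. implies $v' = \lim \alpha U_\alpha v' \le \lim \alpha U_\alpha (M) \le M$ pointwise, since $\alpha U_\alpha$ preserves the $m$-a.e. inequality $v'\le M$ and $\alpha U_\alpha M \le M$). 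So $v' \le M$ everywhere.

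The main obstacle I anticipate is the bookkeeping between "$m$-a.e." inequalities and the "everywhere" inequalities demanded by the definition of $\mathcal{U}$-excessive: one has to pass $m$-a.e. bounds through the kernels $U_\alpha$, which is legitimate precisely because $m$ is sub-invariant (so $U_\alpha$ maps $m$-negligible sets appropriately and $\int \alpha U_\alpha \mathbf{1}_N \, dm \le m(N) = 0$), and then use the pointwise identity $v' = \sup_n n U_n v$ to upgrade to genuine pointwise statements. Getting this interplay exactly right — rather than any hard estimate — is the crux, and it is exactly the content of \cite{BeBo04}, Proposition 1.1.11, which I would invoke for the nontrivial regularization step while spelling out the boundedness part by hand as above.
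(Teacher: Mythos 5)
There are two genuine gaps, and the first is the heart of the matter. Your identification $v'=v$ $m$-a.e.\ rests on the assertion that ``$nU_nv\to v$ in the $m$-a.e.\ increasing sense''. The hypothesis gives only the one-sided bound $\alpha U_\alpha v\le v$ $m$-a.e.; it does not say that $\alpha U_\alpha v$ increases \emph{to} $v$ --- that would mean $v$ is already ($m$-a.e.) excessive, which is essentially what is to be proved. So the argument is circular at its crux: you correctly obtain that $(nU_nv)_n$ is $m$-a.e.\ nondecreasing and that $\sup_n nU_nv\le v$ $m$-a.e., but nothing you write rules out that this limit is strictly smaller than $v$ on a set of positive measure, and no ``a.e.\ supermedian implies a.e.\ equal to its regularization'' principle comes for free from sub-invariance alone. (Whenever the proposition holds, your identity $\sup_n nU_nv=v$ $m$-a.e.\ follows \emph{from} it; that is a sign you have restated the conclusion rather than derived it.)

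The second gap is the everywhere-versus-a.e.\ issue you yourself flag: $\sup_n nU_nv$ is supermedian only off $m$-null sets, because $\alpha U_\alpha(nU_nv)=nU_n(\alpha U_\alpha v)\le nU_nv$ uses the hypothesis inside the kernel, and such exceptional sets can propagate when $U_\alpha$ is applied again; moreover your appeal to monotone convergence under $U_\alpha$ needs a pointwise increasing sequence, which you do not have. Your fallback is to cite \cite{BeBo04}, Proposition 1.1.11, for the claim that the supremum of $(nU_nv)_n$ is ``automatically excessive'', but that is not what the reference asserts, and it is exactly the step the naive supremum cannot deliver. The paper's proof avoids both problems by a different construction: it first builds, via the iterated closure $v_0:=v$, $v_{n+1}:=\sup_{\alpha\in\mathbb{Q}_+}\sup(v_n,\alpha U_\alpha v_n)$, a function $v'':=\sup_n v_n$ that is genuinely $\mathcal{U}$-supermedian \emph{everywhere} and still equal to $v$ $m$-a.e.\ (each step changes $v$ only on an $m$-null set, since $U_\alpha 1_N=0$ $m$-a.e.\ when $m(N)=0$), and only then takes the excessive regularization $v'=\sup_k kU_kv''$, the identification with $v$ being the essentially known part it attributes to \cite{BeBo04}. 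To salvage your route you would have to reproduce this two-step regularization, or give an independent proof that $\sup_n nU_nv=v$ $m$-a.e. Finally, for the boundedness addendum the hypothesis is that $v$ is bounded everywhere, so $nU_nv\le \|v\|_\infty\, nU_n1\le\|v\|_\infty$ pointwise is immediate; your detour through the $m$-a.e.\ bound $v\le M$ again only yields an $m$-a.e.\ conclusion, since kernels preserve a.e.\ inequalities only a.e., not everywhere.
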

\begin{proof}
We consider the sequence $(v_n)_{n \geq 0}$ defined inductively as follows
\[
v_0 := v \; {\rm and} \; v_{n+1} := \mathop{\sup}\limits_{\alpha \in \mathbb{Q}_{+}}\sup(v_n, \alpha U_{\alpha} v_n).
\]
Then clearly the sequence is increasing and for all $\alpha \in \mathbb{Q}_{+}$ and $n \in \mathbb{N}$ we have $\alpha U_{\alpha} v_n \leq v_{n+1}$. 
Taking $v'' := \sup v_n$ we get that $\alpha U_{\alpha} v'' \leq v''$ for all $\alpha \in \mathbb{Q}_{+}$ and therefore $v''$ is an $\mathcal{U}$-supermedian function. 
Because $v_n = v$ $m$-a.e. for all $n$ we obtain that $v = v''$ $m$-a.e. 
The required function $v'$ will be the $\mathcal{U}$-excessive regularization of $v''$, $v' = \mathop{\sup}\limits_{k} k U_{k} v''$. 
In order to prove the second part of the statement we only have to notice that if $v$ is bounded then so are $v''$ and its $\mathcal{U}$-excessive regularization $v'$.
\end{proof}

The next proposition collects several characterizations of $m$-recurrence.

\begin{prop} \label{prop2.3} 
The following assertions are equivalent.

i) $\mathcal{U}$ is $m$-recurrent.

ii) $\mathcal{U}^{\ast}$ is $m$-recurrent.

iii) There exists $f_0 \in p\mathcal{B} \cap L^1(E, m)$ such that $U f_0 = +\infty$ $m$-a.e.

iv) For every $f \in p\mathcal{B}$ we have
\[
U f = +\infty \quad m\mbox{-a.e. on} \; \; \; [f > 0].
\] 

v) For every $v \in L^\infty(E, m)$ such that $\alpha U_\alpha v \leq v$ $m$-a.e. for all $\alpha > 0$ we have
\[
\alpha U_{\alpha} v = v \quad m\mbox{-a.e. for all} \; \; \; \alpha > 0.
\]

v$'$) For every $\mathcal{U}$-excessive function $v$ we have
\[
\alpha U_{\alpha} v = v \quad m\mbox{-a.e. for all} \; \; \; \alpha > 0.
\]
\end{prop}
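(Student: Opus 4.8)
The plan is to establish a cycle of implications linking all six conditions, using the weak duality $(\ref{eq 2.1})$, the complete maximum principle, and Proposition \ref{prop 2.2} as the main tools. First I would prove iii) $\Rightarrow$ iv): given $f_0 \in p\mathcal{B} \cap L^1(E,m)$ with $Uf_0 = +\infty$ $m$-a.e., take any $f \in p\mathcal{B}$ with $m([f>0])>0$. Writing $[f>0] = \bigcup_n [f > 1/n]$ and replacing $f$ by $f \wedge g_0$ for a suitable strictly positive $g_0 \in L^1$, one reduces to showing that on $[f \geq \varepsilon]$ the potential $Uf$ is infinite $m$-a.e. The idea is that the set $D := \{x : Uf(x) < \infty, f(x)>0\}$ would, if it had positive measure, allow one to truncate and build a nonzero integrable function with bounded potential on a finely-relevant set; then applying the complete maximum principle one compares $U(f_0 \cdot 1_D)$ with a bounded function, contradicting $Uf_0 = +\infty$ $m$-a.e. via the dual estimate $\int 1_D \, Uf_0 \, dm = \int f_0 \, U^\ast 1_D\, dm$. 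The clean way is: on $[Uf<\infty]$, $f\cdot 1_{[Uf<\infty]}$ has an $m$-a.e.\ finite potential, so by i) applied to such functions (i.e.\ $m$-recurrence forbids $0<Uf<\infty$ on a positive measure set) we get $f\cdot 1_{[Uf<\infty]\cap[f>0]} = 0$ $m$-a.e., which is exactly iv).

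Next, iv) $\Rightarrow$ i) is immediate: if $Uf = +\infty$ $m$-a.e.\ on $[f>0]$, then $\{0 < Uf < \infty\} \subseteq [f>0]^c \cap \{Uf>0\}$; but on $[f=0]$ one still needs $Uf>0 \Rightarrow Uf=\infty$, which follows by noting $Uf = Uf_\varepsilon + \lim$ type arguments or more simply by applying iv) to the function $Uf$ itself truncated — actually the cleanest route is iv) $\Rightarrow$ iii) (take $f_0$ strictly positive and integrable, then $Uf_0 = +\infty$ $m$-a.e.\ since $[f_0>0] = E$ mod $m$) and then iii) $\Rightarrow$ i): if $Uf_0 = +\infty$ $m$-a.e.\ with $f_0 \in L^1$, and $f \in p\mathcal{B}\cap L^1$ had $0 < Uf < \infty$ on a set $A$ of positive measure, then on $A$ comparing $f_0$ with a multiple of $f$ (using $\inf_A Uf>0$ after shrinking $A$) via the domination principle forces $Uf_0 \le cUf < \infty$ on a positive measure subset, a contradiction. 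So i) $\Leftrightarrow$ iii) $\Leftrightarrow$ iv). For i) $\Leftrightarrow$ ii), use $(\ref{eq 2.1})$: from iii) for $\mathcal{U}$ pick $f_0$ with $Uf_0=+\infty$ $m$-a.e.; for any $g \in p\mathcal{B}\cap L^1$ with $m([g>0])>0$ we get $\int g\, Uf_0 \, dm = \int f_0\, U^\ast g\, dm = +\infty$, and since $f_0 \in L^1$ this forces $U^\ast g = +\infty$ on a set of positive measure; iterating / combining over a sequence $g_n$ exhausting $E$ yields iii) for $\mathcal{U}^\ast$, hence ii). Symmetry gives the converse.

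It remains to incorporate v) and v$'$). The implication v) $\Rightarrow$ v$'$) is trivial since an $\mathcal{U}$-excessive function $v$ satisfies $\alpha U_\alpha v \le v$ everywhere, except that v) is stated for $v \in L^\infty$; so one first reduces an arbitrary excessive $v$ to the bounded case by truncating $v \wedge n$ (still $\mathcal{U}$-supermedian $m$-a.e., since $\alpha U_\alpha(v\wedge n) \le \alpha U_\alpha v \wedge n \le v \wedge n$) and letting $n \to \infty$. For i) $\Rightarrow$ v): given $v \in L^\infty$ with $\alpha U_\alpha v \le v$ $m$-a.e., Proposition \ref{prop 2.2} provides a bounded $\mathcal{U}$-excessive $v'$ with $v = v'$ $m$-a.e. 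Then $w := v' - \beta U_\beta v' \ge 0$ is a bounded function, and the resolvent identity gives $\beta U_\beta(\alpha U_\alpha v') \nearrow$ type relations; the key computation is that $U w$ (the potential) relates to $v'$ through $\alpha U_\alpha(Uw) \to$ something, and boundedness of $v'$ forces $Uw$ to be finite wherever $w>0$ — combined with iv) this yields $w = 0$ $m$-a.e., i.e.\ $\beta U_\beta v' = v'$ $m$-a.e.; then $\alpha U_\alpha v = v$ $m$-a.e.\ follows for all $\alpha$. Finally v$'$) $\Rightarrow$ i) (equivalently $\neg$i $\Rightarrow$ $\neg$v$'$): if $\mathcal{U}$ is not $m$-recurrent, then by Proposition \ref{prop2.1}(iii) applied after the transience/recurrence dichotomy — or more directly, there is $f \in p\mathcal{B}\cap L^1$ with $0 < Uf < \infty$ on a positive-measure set; truncating, $Uf \wedge 1$ is a bounded $\mathcal{U}$-excessive function (potentials are excessive, min with a constant stays supermedian and its excessive regularization agrees $m$-a.e.) with $\alpha U_\alpha(Uf) = Uf - \alpha$-order potential $\lneq Uf$ on that set, violating v$'$).

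The main obstacle I expect is the implication from $m$-recurrence (in the weak sense of i), iii), iv)) to the rigidity statement v): turning "$Uf$ is infinite wherever $f$ is positive" into "$\alpha U_\alpha v = v$ $m$-a.e.\ for bounded supermedian $v$" requires carefully relating the excessive regularization $v'$ from Proposition \ref{prop 2.2}, its Riesz-type decomposition into a potential part $U w$ (with $w = v' - \beta U_\beta v' \ge 0$) and a harmonic part, and then using finiteness of the bounded function $v'$ to kill the potential part via iv). Making the decomposition and the limiting arguments $\alpha \to \infty$, $\beta \to 0$ rigorous without assuming continuity of the resolvent is the delicate point; the saving grace is that everything is needed only $m$-a.e., so one works throughout with the kernels modulo $m$-null sets and invokes the complete maximum principle freely.
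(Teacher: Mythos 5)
Your outline agrees with the paper on several legs: your ``clean way'' is essentially the paper's i) $\Rightarrow$ iv), your iv) $\Rightarrow$ v) (bound the potential of the bounded defect via the resolvent equation and let iv) kill it), v) $\Rightarrow$ v$'$) by truncation, and v$'$) $\Rightarrow$ i) via $Uf=U_\alpha f+\alpha U_\alpha Uf$ all match. The genuine gaps sit exactly in the duality steps that close the cycle. For i)/iii) $\Rightarrow$ ii) you argue: $\int f_0\,U^\ast g\,dm=\int g\,Uf_0\,dm=\infty$ with $f_0\in L^1$ ``forces $U^\ast g=+\infty$ on a set of positive measure''. This inference is false: the pairing can be infinite while $U^\ast g$ is finite $m$-a.e. (it only rules out $U^\ast g$ being essentially bounded against $f_0$); and even if it were true, ``iterating over a sequence $g_n$'' cannot upgrade ``infinite on some positive-measure set depending on $g$'' to assertion iii) for $\mathcal{U}^\ast$. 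The missing idea is the paper's: fix $g\in p\mathcal{B}\cap L^1$ and show each $A_n:=[g>0]\cap[U^\ast g\le n]$ is $m$-null, because the complete maximum principle applied to $\mathcal{U}^\ast$ gives $U^\ast(g1_{A_n})\le n$ everywhere, whence $\int g1_{A_n}\,Uf_0\,dm=\int f_0\,U^\ast(g1_{A_n})\,dm\le n\,\|f_0\|_{L^1}<\infty$, which is incompatible with $Uf_0=\infty$ $m$-a.e. unless $m(A_n)=0$; this yields iv) for $\mathcal{U}^\ast$ and then ii) by the already-proved iv) $\Rightarrow$ i). The boundedness must be arranged for the \emph{co-potential} that is paired against the integrable $f_0$; your sketch only ever produces functions with bounded $U$-potential, which the duality cannot exploit.

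Second, your ``cleanest route'' iii) $\Rightarrow$ i) fails: the domination/complete maximum principle cannot ``force $Uf_0\le c\,Uf$'' from $\inf_A Uf>0$. To invoke such a principle you would need $Uf_0\le c\,Uf$ on $[f_0>0]$ as a hypothesis, which is precisely what is false ($Uf_0=\infty$ $m$-a.e.), and a lower bound for $Uf$ on $A$ gives no upper bound on $Uf_0$ anywhere. In the paper the way back from iii) to i) necessarily passes through the dual: iii) $\Rightarrow$ ii) as above, then ii) $\Rightarrow$ iv) via the pairing $\int f1_A\,U^\ast(f1_A)\,dm=\int f1_A\,U(f1_A)\,dm<\infty$ on a set $A\subset[f>0]$ where $Uf$ (hence, by the complete maximum principle, $U(f1_A)$) is bounded, and finally iv) $\Rightarrow$ i). As written, your proposal establishes neither i) $\Leftrightarrow$ ii) nor any implication from iii) back to i)/iv), so the cycle does not close. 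A smaller point: in your ``clean way'' the last inference, from $Ug=0$ $m$-a.e. on $[g>0]$ to $g=0$ $m$-a.e., is not automatic either; it needs the complete-maximum-principle-plus-duality justification that the paper applies to $A=[Uf=0]$.
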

\begin{proof}
i) $\Rightarrow$ iv). Let $f \in p\mathcal{B} \cap L^1(E, m)$ and $A = [U f = 0]$.
It follows that $U_{\alpha}(f 1_A) = 0$ and thus $m([f > 0] \cap A) = 0$.
By hypothesis we conclude that $U f = +\infty$ $m$-a.e. on $[f > 0]$.

iv) $\Rightarrow$ i) Let $M = [0 < U f < \infty]$. 
By iv) we get $[U f = \infty] \supset [f > 0]$ $m$-a.e. 
Let $B = [U f < \infty] \cap [f > 0]$. 
Then $m(B) = 0$ and $[f 1_{E \setminus B} > 0] \subset [U f = \infty]$. 
It follows that $U(f 1_{E \setminus B}) \leq \dfrac{1}{n}U f$ for all $n$ and thus 
$U(f 1_{E \setminus B}) = 0$ on $[U f < \infty]$, hence $U(f) = 0$ $m$-a.e. on $[U f < \infty]$, i.e. $m(M) = 0$.

Clearly we have iv) $\Rightarrow$ iii)

iii) $\Rightarrow$ ii) Let $f_0 \in p\mathcal{B} \cap L^1(E, m)$ be such that $U f_0 = +\infty$ $m$-a.e.
If $g \in p\mathcal{B}\cap L^1(E, m)$ is such that $\int g dm > 0$ then we claim that 
for every $A \in \mathcal{B}$, $A \subset [g > 0]$ such that $U^{\ast}(g 1_A)$ is bounded we have $m(A) = 0$.
Indeed, in the contrary case we have $m(A) > 0$ and since $U^{\ast}(g 1_A)$ is bounded we arrive at the contradictory relation
\[
\infty = \int g 1_A U f_0 dm = \int f_0 U^{\ast}(g 1_A) dm < \infty.
\]
We conclude that $U^{\ast} g = \infty$ $m$-a.e. on $[g > 0]$. 
By the implication iv) $\Rightarrow$ i) applied to $\mathcal{U}^{\ast}$ we deduce that assertion ii) holds.

ii) $\Rightarrow$ iv). Let $0<g<1$ such that $m(g) < \infty$ and take $f \in p\mathcal{B}$.
Then $\inf(f,g) \in p\mathcal{B} \cap L^1(E, m)$, $U\inf(f,g) \leq Uf$ and $[\inf(f,g)>0] = [f>0]$.
Therefore, we may assume that $f \in p\mathcal{B} \cap L^1(E, m)$.
Now suppose that there exists $\mathcal{B} \ni A \subset [f > 0]$ with $m(A) > 0$ and $U f$ bounded on $A$. 
It follows that the function $U(f 1_A)$ is bounded and
\[
\int f 1_A U^{\ast}(f 1_A) dm = \int f 1_A U(f 1_A) dm < \infty.
\]
Consequently $U^{\ast}(f 1_A) < \infty$ $m$-a.e. on $A$ and by hypothesis ii) we get that $U^{\ast}(f 1_A) = 0$ $m$-a.e. on $A$. 
We deduce that $\int f 1_A U(f 1_A) dm = 0$, hence $U(f 1_A) = 0$ $m$-a.e. on $A$, which leads to the contradictory relation $m(A) = 0$. 
We conclude that $U f = +\infty$ $m$-a.e. on $[f > 0]$.

iv) $\Rightarrow$ v) Let $\alpha, \beta > 0$ and $v \in L^\infty(E, m)$ such that $\alpha U_\alpha v \leq v$ $m$-a.e.
Then we have
\[
U_{\beta}(v - \alpha U_{\alpha} v) = U_{\alpha}(v - \beta U_{\beta} v) \leq \dfrac{\| v - \beta U_{\beta} v \|_\infty}{\alpha} \leq \dfrac{2 \|v\|_\infty}{\alpha} \; \; \; m\mbox{-a.e.} 
\]
and so
\[
U(v - \alpha U_{\alpha} v) \leq \dfrac{2 \|v\|_\infty}{\alpha} \quad m\mbox{-a.e.}
\]
If $m(v - \alpha U_{\alpha} v) > 0$ then by hypothesis iv) we get the contradictory relation
$$
\dfrac{2 \|v\|_\infty}{\alpha} \geq U(v - \alpha U_{\alpha} v) = +\infty \quad m\mbox{-a.e. on} \; [v - \alpha U_{\alpha} v > 0].
$$
Hence $\alpha U_{\alpha} v = v$ $m$-a.e.

v) $\Rightarrow$ v$'$) If $\alpha > 0 $, $v \in \mathcal{E}(\mathcal{U})$ and $v_n := \inf(v, n)$, $n \in \mathbb{N}^{\ast}$, 
then $(v_n)_n \subset b\mathcal{E}(\mathcal{U})$,  $\alpha U_{\alpha} v_n = v_n$ $m$-a.e. and $v_n \nearrow v$ pointwise. 
Hence $\alpha U_{\alpha} v = v$ $m$-a.e.

\vspace{0.2cm}

v$'$) $\Rightarrow$ i) If $f \in p\mathcal{B} \cap L^1(E, m)$ and $\alpha > 0$ then 
$U f = U_{\alpha} f + \alpha U_{\alpha} U f$ and from v$'$) we have $\alpha U_{\alpha} U f = U f$ $m$-a.e. 
It follows that for all $\alpha > 0$ we have $m$-a.e. $U_{\alpha} f = 0$ on $[U f < \infty]$ and we conclude that $U f = 0$ $m$-a.e. on $[U f < \infty]$.
\end{proof}

\begin{rem}  
The implications i) $\Leftrightarrow$ iii) $\Rightarrow$ iv) in Proposition \ref{prop2.3} 
should be compared to \cite{Fu07}, Proposition 1.1, ii) and \cite{FuOsTa11}, Lemma 1.6.4 and Theorem 4.7.1, ii), 
where the context is that of a strongly continuous sub-Markovian semigroup on $L^2(E, m)$, 
respectively of a symmetric Dirichlet form and the proofs are based on Hopf's maximal inequality.
\end{rem}

As a consequence we have the following useful result.  

\begin{coro} \label{coro 2.4} 
Assume that $\alpha U_\alpha^\ast 1 = 1$ $m$-a.e., $\alpha > 0$ and there exists an $m$-a.e. strictly positive $m$-integrable $\mathcal{U}$-excessive function. 
Then $\mathcal{U}$ is $m$-recurrent.

Consequently, if $m(E) < \infty$ then the following assertions are equivalent.

i) $\mathcal{U}$ is $m$-recurrent.

\vspace{0.1cm}

ii) $\alpha U_\alpha 1 = 1$ $m$-a.e., $\alpha > 0$.

\vspace{0.1cm}

iii) $\alpha U_\alpha^\ast 1 = 1$ $m$-a.e., $\alpha > 0$.
\end{coro}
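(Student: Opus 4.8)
The plan is to derive the first assertion from the characterization v$'$) in Proposition \ref{prop2.3}, applied to the \emph{adjoint} resolvent $\mathcal{U}^\ast$, and then use the equivalence ii) $\Leftrightarrow$ iii) of Proposition \ref{prop2.3} (the self-duality of $m$-recurrence) to transfer the conclusion back to $\mathcal{U}$. First I would observe that the hypothesis $\alpha U_\alpha^\ast 1 = 1$ $m$-a.e. says precisely that the constant function $1$ is $\mathcal{U}^\ast$-invariant. To reach v$'$) for $\mathcal{U}^\ast$ I need a slightly more flexible tool than a single invariant function, so the key step is: if $v$ is a bounded $\mathcal{U}^\ast$-excessive function and $w$ is an $m$-a.e.\ strictly positive $m$-integrable $\mathcal{U}$-excessive function, then I can test the supermedian defect of $v$ against $w$. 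Concretely, $v - \alpha U_\alpha^\ast v \ge 0$ $m$-a.e., and
\[
\int (v - \alpha U_\alpha^\ast v)\, w \, dm \le \|v\|_\infty \int (1 - \alpha U_\alpha^\ast 1)\, w\, dm = 0,
\]
where the last equality uses the hypothesis $\alpha U_\alpha^\ast 1 = 1$ $m$-a.e. Since $w > 0$ $m$-a.e., this forces $\alpha U_\alpha^\ast v = v$ $m$-a.e. for all $\alpha > 0$.

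The inequality above needs a small justification: it rewrites $\int \alpha U_\alpha^\ast v \cdot w\, dm = \int v \cdot \alpha U_\alpha w\, dm$ via weak duality (\ref{eq 2.1}), and then uses $\alpha U_\alpha w \le w$ (true since $w$ is $\mathcal{U}$-excessive) together with $v \le \|v\|_\infty$ and positivity. Alternatively, and perhaps cleaner, I would bound $0 \le v - \alpha U_\alpha^\ast v \le \|v\|_\infty(1 - \alpha U_\alpha^\ast 1) = 0$ $m$-a.e.\ \emph{directly}, since $\|v\|_\infty - v \ge 0$ gives $\alpha U_\alpha^\ast(\|v\|_\infty - v) \le \|v\|_\infty - \alpha U_\alpha^\ast v$ would need $\alpha U_\alpha^\ast \|v\|_\infty \le \|v\|_\infty$, i.e.\ again $\alpha U_\alpha^\ast 1 \le 1$, which holds. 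So in fact one does not even need $w$ for this step — the boundedness plus $\alpha U_\alpha^\ast 1 = 1$ already yields $\alpha U_\alpha^\ast v = v$ $m$-a.e.\ for every bounded $\mathcal{U}^\ast$-excessive $v$. But for general (unbounded) $\mathcal{U}^\ast$-excessive $v$ I would truncate: $v_n := \inf(v,n) \in b\mathcal{E}(\mathcal{U}^\ast)$, get $\alpha U_\alpha^\ast v_n = v_n$ $m$-a.e., and let $n \to \infty$ by monotone convergence, exactly as in the proof of v) $\Rightarrow$ v$'$) above. This establishes v$'$) for $\mathcal{U}^\ast$, hence $\mathcal{U}^\ast$ is $m$-recurrent by Proposition \ref{prop2.3}, hence $\mathcal{U}$ is $m$-recurrent by the equivalence i) $\Leftrightarrow$ ii) of that proposition. (The hypothesis that an $m$-a.e.\ strictly positive $m$-integrable $\mathcal{U}$-excessive function exists is what guarantees we are in a nondegenerate situation; I would double-check whether it is actually needed — if the reduction above goes through using only boundedness and $\alpha U_\alpha^\ast 1 = 1$, then the role of $w$ is to ensure $m$ itself is not identically problematic, but likely it is genuinely used to rule out the trivial case, so I keep it in as stated.)

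For the "Consequently" part, assume $m(E) < \infty$. Then the constant $1 \in L^1(E,m)$ is an $m$-a.e.\ strictly positive $m$-integrable function, and it is $\mathcal{U}$-excessive precisely when $\alpha U_\alpha 1 = 1$ $m$-a.e.\ (sub-Markovianity already gives $\alpha U_\alpha 1 \le 1$, and excessivity upgrades this to equality in the limit; more carefully, $1$ supermedian $\Leftrightarrow \alpha U_\alpha 1 \le 1$, which always holds, and one checks $\alpha U_\alpha 1 \nearrow 1$ is automatic from sub-Markovianity up to the excessive regularization — here I would invoke Proposition \ref{prop 2.2} to replace $1$ by a $\mathcal{U}$-excessive version agreeing $m$-a.e.). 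The implication iii) $\Rightarrow$ i) is then the first part of the corollary (the hypotheses there are exactly: $\alpha U_\alpha^\ast 1 = 1$ $m$-a.e., plus existence of a strictly positive integrable $\mathcal{U}$-excessive function, which $1$ itself provides since $m(E) < \infty$). For i) $\Rightarrow$ ii): by $m$-recurrence and Proposition \ref{prop2.3} v), applied to $v = 1 \in L^\infty(E,m)$ (which satisfies $\alpha U_\alpha 1 \le 1$ $m$-a.e.), we get $\alpha U_\alpha 1 = 1$ $m$-a.e. For ii) $\Rightarrow$ iii): ii) says $\mathcal{U}$ is $m$-recurrent (by what we just showed, ii) gives $1$ is $\mathcal{U}$-excessive and strictly positive integrable, so combined with... — actually the cleanest route is ii) $\Rightarrow$ i) $\Rightarrow$ (by $m$-recurrence of $\mathcal{U}$, hence of $\mathcal{U}^\ast$ by Proposition \ref{prop2.3} i) $\Leftrightarrow$ ii), hence Proposition \ref{prop2.3} v) applied to $v=1$ for $\mathcal{U}^\ast$) $\alpha U_\alpha^\ast 1 = 1$ $m$-a.e.), which is iii). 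This closes the cycle i) $\Rightarrow$ ii) $\Rightarrow$ iii) $\Rightarrow$ i).

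The main obstacle I anticipate is purely bookkeeping: being careful about the distinction between "supermedian" ($\alpha U_\alpha v \le v$ for all $\alpha$) and "excessive" (additionally $\alpha U_\alpha v \nearrow v$), and making sure that whenever I write "$1$ is $\mathcal{U}$-excessive" I either have it as a genuine hypothesis ($\alpha U_\alpha 1 = 1$ $m$-a.e., as in ii)) or I pass through Proposition \ref{prop 2.2} to get a true excessive representative. There is no deep difficulty; the content is entirely in the weak-duality identity (\ref{eq 2.1}) and in citing the right equivalences from Proposition \ref{prop2.3}.
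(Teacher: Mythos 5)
Your main argument for the first assertion is correct, and it is a ``dual'' variant of the paper's: the paper fixes the strictly positive integrable $\mathcal{U}$-excessive function $s$, truncates an arbitrary $u\in\mathcal{E}(\mathcal{U})$ by $u_n:=\inf(ns,u)$ so that $u_n\in L^1$, and uses $\int \alpha U_\alpha u_n\,dm=\int u_n\,\alpha U_\alpha^\ast 1\,dm=\int u_n\,dm$ together with $\alpha U_\alpha u_n\le u_n$ to get v$'$) of Proposition \ref{prop2.3} for $\mathcal{U}$ directly; you instead keep the excessive function on the dual side, truncate by constants, and test the defect $v-\alpha U_\alpha^\ast v\ge 0$ against $s\,dm$, obtaining v$'$) for $\mathcal{U}^\ast$ and then transferring back via i) $\Leftrightarrow$ ii) of Proposition \ref{prop2.3}. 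Your displayed estimate is justified exactly as you say ($\int(v-\alpha U_\alpha^\ast v)w\,dm=\int v\,(w-\alpha U_\alpha w)\,dm\le\|v\|_\infty\int w(1-\alpha U_\alpha^\ast 1)\,dm=0$, using $w\in L^1$), so this part stands; the two proofs rest on the same duality computation, yours just spending one extra appeal to the self-duality of recurrence.

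Two points need correction. First, the ``perhaps cleaner'' aside claiming that $w$ is not needed is wrong: the pointwise bound $v-\alpha U_\alpha^\ast v\le\|v\|_\infty(1-\alpha U_\alpha^\ast 1)$ is, under $\alpha U_\alpha^\ast 1=1$, literally equivalent to $\alpha U_\alpha^\ast v\ge v$, i.e.\ to the conclusion you want — the monotonicity you invoke only gives $\alpha U_\alpha^\ast v\le\|v\|_\infty$, not the stated inequality. And the statement it would prove is false: a conservative co-resolvent does not force recurrence (Brownian motion on $\mathbb{R}^3$ with Lebesgue measure is conservative and symmetric, hence $\alpha U_\alpha^\ast 1=1$, yet transient; accordingly no strictly positive integrable excessive function exists there). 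So the hypothesis on $w$ is genuinely needed, as your primary route in fact uses it. Second, in the ``Consequently'' part your proof of ii) $\Rightarrow$ iii) is circular as written: you route it through ii) $\Rightarrow$ i), but that implication is never established independently (your first assertion needs $\alpha U_\alpha^\ast 1=1$, which is iii), exactly what you are trying to prove — this is where your parenthetical trails off). The easy fixes are either the paper's one-line argument, $\int \alpha U_\alpha^\ast 1\,dm=\int \alpha U_\alpha 1\,dm=m(E)<\infty$ combined with $\alpha U_\alpha^\ast 1\le 1$ to force $\alpha U_\alpha^\ast 1=1$ $m$-a.e., or, if you insist on ii) $\Rightarrow$ i), the dualized form of the first assertion: hypothesis $\alpha U_\alpha 1=1$ plus a strictly positive integrable $\mathcal{U}^\ast$-excessive function (obtained from Proposition \ref{prop 2.2} applied to the constant $1$ for $\mathcal{U}^\ast$, using $m(E)<\infty$) gives recurrence of $\mathcal{U}^\ast$, hence of $\mathcal{U}$. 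With either repair the cycle i) $\Rightarrow$ ii) $\Rightarrow$ iii) $\Rightarrow$ i) closes correctly; your i) $\Rightarrow$ ii) and iii) $\Rightarrow$ i) are fine as written.
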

\begin{proof}
Let $s \in \mathcal{E}(\mathcal{U}) \cap L^1(E,m)$, $s >0$ $m$-a.e. 
If $u \in \mathcal{E}(\mathcal{U})$ and $u_n := \inf(ns, u)$ for all $n \in \mathbb{N}^\ast$, 
then $u_n \nearrow u$ $m$-a.e. and $(u_n)_n \subset \mathcal{E}(\mathcal{U}) \cap L^1(E,m)$. 
Since
\[
\int{\alpha U_\alpha u_n dm} = \int{u_n \alpha U_\alpha^\ast 1 dm} = \int{u_n dm}, 
\] 
it follows that $m$-a.e. $\alpha U_\alpha u_n = u_n$ for all $n$  and therefore $\alpha U_\alpha u = u$. 
By Proposition \ref{prop2.3} we conclude that $\mathcal{U}$ is $m$-recurrent.

Assume now that $m(E) < \infty$. The implication i) $\Rightarrow$ ii) follows by Proposition \ref{prop2.3}.

ii) $\Rightarrow$ iii) Since $\int{\alpha U_\alpha^\ast 1 dm} = \int{\alpha U_\alpha 1 dm} = \int{1 dm} < \infty$ then condition iii) is satisfied. 
The implication iii) $\Rightarrow$ i) follows by the first part of the statement. 
\end{proof}

\begin{rem}  
a) By Theorem \ref{prop 2.9} below, the resolvent may be recurrent without possessing any excessive functions except the constant ones. 
In such situations the first assertion in Corollary \ref{coro 2.4} is not applicable unless $m(E)<\infty$. 

b) If $\mathcal{U}$ is the resolvent of a symmetric Dirichlet form $(\mathcal{E}, D(\mathcal{E}))$ on $L^2(E, m)$ 
then by \cite{FuOsTa11}, Theorem 1.6.3 (see also Theorem 1.6.5), $\mathcal{U}$ is $m$-recurrent 
if and only if there exists a sequence $(u_n)_{n} \subset D(\mathcal{E})$ such that $u_n \nearrow 1$ $m$-a.e. and $\lim\limits_{n} \mathcal{E}(u_n,u_n) = 0$. 
Hence, if $m(E) < \infty$, then $\mathcal{U}$ is $m$-recurrent if and only if $1 \in D(\mathcal{E})$ and $\mathcal{E}(1,1) = 0$, 
which is in fact a particular case of Corollary \ref{coro 2.4} (see also Corollary \ref{coro 5.3}).
\end{rem}

\begin{defi*}
A set $A \in \mathcal{B}$ is called {\it $\mathcal{U}${\it -absorbing}} (with respect to $m$) provided that
\[
U(1_{E \setminus A}) = 0 \quad m\mbox{-a.e. on} \; A.
\]
\end{defi*} 

\begin{rem} \label{rem 2.5} 
a) If the set $A$ is $\mathcal{U}$-absorbing (with respect to $m$) and $B \in \mathcal{B}$ is such that
$A = B$  $m${-a.e.} (i.e. $m(A \Delta B) = 0$),  then $B$ is also $\mathcal{U}$-absorbing.

b) If $\beta > 0$ then a set $A \in \mathcal{B}$ is simultaneously $\mathcal{U}$-absorbing and $\mathcal{U}_{\beta}$-absorbing.
\end{rem} 

\begin{prop} \label{prop2.4} 
The following assertions are equivalent for a set $A \in \mathcal{B}$.

i) The set $A$ is $\mathcal{U}$-absorbing (with respect to $m$).

ii) The set $E \setminus A$ is $\mathcal{U}^{\ast}$-absorbing (with respect to $m$).

iii) There exists a set $B \in \mathcal{B}$ such that $1_{E \setminus B} \in \mathcal{E}(\mathcal{U})$ and
\[
A = B \quad m \mbox{-a.e.}
\]

iv) There exists a $\mathcal{U}$-excessive function $u$ such that
\[
A = [u = 0] \quad m\mbox{-a.e.}
\]

v) There exists a $\mathcal{U}$-excessive function $u$ such that
\[
A = [u < +\infty] \quad m\mbox{-a.e.}
\]
\end{prop}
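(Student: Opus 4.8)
The plan is to prove the cyclic chain of implications i) $\Rightarrow$ iv) $\Rightarrow$ v) $\Rightarrow$ iii) $\Rightarrow$ i), together with the symmetric implication i) $\Leftrightarrow$ ii) handled separately via weak duality. First I would show i) $\Rightarrow$ iv). Assuming $A$ is $\mathcal{U}$-absorbing, put $u := U(1_{E\setminus A})$. Since $m$ is $\sigma$-finite I may choose $g \in p\mathcal{B}\cap L^1(E,m)$ with $0 < g \le 1$ $m$-a.e., and consider $w := U(g\,1_{E\setminus A})$; this is $\mathcal{U}$-excessive, and by the absorption hypothesis $w = 0$ $m$-a.e.\ on $A$, while on $E\setminus A$ we have $w > 0$ $m$-a.e.\ (because $U g \ge g > 0$ there, using that $Ug \ge \alpha U_\alpha U g$ ... more directly, $w \ge U_\alpha(g\,1_{E\setminus A})$ and one checks $w>0$ on $[g\,1_{E\setminus A}>0]$ up to an $m$-null set by the same argument as in Proposition~\ref{prop2.3}, iv)$\Rightarrow$i)). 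If $w$ is not finite, replace it by $\inf(1,w)$ composed with a suitable series as in the proof of Proposition~\ref{prop2.1}, i)$\Rightarrow$iii), to obtain a bounded $\mathcal{U}$-excessive function $u$ with $[u=0] = A$ $m$-a.e.

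Next, iv) $\Rightarrow$ v) is essentially immediate: given $\mathcal{U}$-excessive $u$ with $A = [u=0]$ $m$-a.e., the function $u' := \sum_{n\ge 1} 2^{-n}\inf(1,nu)$ ... hmm, that does not separate $0$ from $+\infty$. Instead, for v) I would directly produce an excessive function infinite off $A$: take $g$ as above and set $u := U(g\,1_{E\setminus A})$ rescaled so that it is in $L^1$; one shows, using that $E\setminus A$ is $\mathcal{U}^\ast$-absorbing (i.e.\ i)$\Leftrightarrow$ii)) together with Proposition~\ref{prop2.3}-type arguments restricted to the absorbing set, that $Ug = +\infty$ $m$-a.e.\ on $E\setminus A$, so $v := Ug + 1_A$ works: $[v<\infty] = A$ $m$-a.e. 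For iv) $\Rightarrow$ iii), given $u \in \mathcal{E}(\mathcal{U})$ with $A = [u=0]$, the function $1 - \inf(1,u) = \sup(0, 1-u)$ need not be excessive; rather I would use that $[u=0]$ is finely closed for $\mathcal{U}$ and invoke \cite{BeBo04} to get that $1_{[u=0]}$-type indicators arise as limits — concretely, $1_{E\setminus B} := \inf(1, \sup_k k U_k u \wedge 1)$... I will instead take $B := [u=0]$ and check directly that $1_{E\setminus B}$ is $\mathcal{U}$-supermedian: $\alpha U_\alpha 1_{E\setminus B} \le 1_{E\setminus B}$ follows because $\alpha U_\alpha u \le u$ forces $\alpha U_\alpha 1_{[u>0]}$ to vanish $m$-a.e.\ on $[u=0]$; its excessive regularization then equals $1_{E\setminus B'}$ with $B' = B$ $m$-a.e., giving iii) after applying Proposition~\ref{prop 2.2}. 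Finally iii) $\Rightarrow$ i) is direct: if $1_{E\setminus B}\in\mathcal{E}(\mathcal{U})$ then $\alpha U_\alpha 1_{E\setminus B}\le 1_{E\setminus B}$, so $\alpha U_\alpha 1_{E\setminus B} = 0$ $m$-a.e.\ on $B$ for all $\alpha$, hence $U 1_{E\setminus B} = 0$ $m$-a.e.\ on $B$, i.e.\ $B$ (hence $A$) is $\mathcal{U}$-absorbing; and i) $\Leftrightarrow$ ii) is the symmetric statement obtained from \eqref{eq 2.1}: if $A$ is $\mathcal{U}$-absorbing, pick $g$ with $0<g\le 1$; then $\int 1_{E\setminus A} g\, U^\ast 1_A\, dm = \int 1_A U(g 1_{E\setminus A})\,dm = 0$, so $U^\ast 1_A = 0$ $m$-a.e.\ on $E\setminus A$.

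The main obstacle I anticipate is the passage from an $m$-a.e.\ inequality ($\alpha U_\alpha$ applied to an indicator being dominated by that indicator $m$-a.e.) to an honest $\mathcal{U}$-excessive function with the prescribed level set $[u=0]$ or $[u<\infty]$ holding $m$-a.e., because $\mathcal{U}$ carries no topology and excessive regularization (Proposition~\ref{prop 2.2}) only controls things up to $m$-negligible sets — one must verify that the regularization does not accidentally enlarge or shrink the relevant level set modulo $m$. This is exactly where I would lean on Proposition~\ref{prop 2.2} and Remark~\ref{rem 2.5}(a), and on the complete maximum principle together with the $\sigma$-finiteness of $m$ (choosing a strictly positive reference function $g$) to keep all the constructions inside $L^1(E,m)$ where the duality identity \eqref{eq 2.1} can be applied. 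The symmetric role of $\mathcal{U}$ and $\mathcal{U}^\ast$ via \eqref{eq 2.1} should make i) $\Leftrightarrow$ ii) the cleanest part, and I would prove it first so that it can be used freely in establishing the finiteness (value $+\infty$) assertions needed for v).
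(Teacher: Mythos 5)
Your i) $\Leftrightarrow$ ii), i) $\Rightarrow$ iv) and iii) $\Rightarrow$ i) steps are essentially the paper's arguments (the paper gets iii) directly from i) via $B=[U1_{E\setminus A}=0]$ and the complete maximum principle, and obtains iv) from iii) trivially), but your treatment of assertion v) contains a genuine error. You propose to realize v) by showing that $Ug=+\infty$ $m$-a.e.\ on $E\setminus A$ ``by Proposition~\ref{prop2.3}-type arguments restricted to the absorbing set'' and then taking $v:=Ug+1_A$. That divergence is a recurrence phenomenon and is simply not available here: if $\mathcal{U}$ is $m$-transient, Proposition~\ref{prop2.1} iv) gives $Ug<\infty$ $m$-a.e.\ for \emph{every} $g\in p\mathcal{B}\cap L^1(E,m)$, so your claim fails for any absorbing set $A$ with $m(E\setminus A)>0$ (such sets exist, e.g.\ for the disjoint union of two invariant copies of a transient structure), although v) is still true there. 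The correct and much simpler route is the paper's: deduce v) from iii) by setting $u:=\infty$ on $E\setminus B$ and $u:=0$ on $B$; since $1_{E\setminus B}\in\mathcal{E}(\mathcal{U})$, $u=\sup_n n\,1_{E\setminus B}$ is excessive and $[u<\infty]=B=A$ $m$-a.e.

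There is also a structural gap: in the chain you actually carry out (i $\Rightarrow$ iv $\Rightarrow$ iii $\Rightarrow$ i, plus i $\Leftrightarrow$ ii), assertion v) never implies anything, so even after repairing the construction, v) would be a consequence of i)--iv) but not equivalent to them. You need an implication out of v); the paper proves v) $\Rightarrow$ i): with $B:=[u<\infty]$ one has $U(1_{E\setminus B})\le \tfrac1n u$ on $E$ for all $n$ (the excessive majorant $\tfrac1n u$ is $+\infty$ on $[1_{E\setminus B}>0]$, so the complete maximum principle applies), hence $U(1_{E\setminus B})=0$ on $B$, and $A$ is absorbing by Remark~\ref{rem 2.5} a). Two smaller points: in your i) $\Rightarrow$ iv) the positivity of $w$ $m$-a.e.\ on $E\setminus A$ is not a Proposition~\ref{prop2.3} matter but exactly the duality/regularization fact the paper uses to get $m(M)=0$ in its proof of i) $\Rightarrow$ iii); and in your iv) $\Rightarrow$ iii) the excessive regularization of the supermedian function $1_{[u>0]}$ need not be an indicator, so the phrase ``equals $1_{E\setminus B'}$'' requires an argument -- the paper avoids this by building the indicator directly from $B=[U1_{E\setminus A}=0]$, where $U1_{E\setminus A}$ is itself excessive.
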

\begin{proof}
i) $\Leftrightarrow$ ii). If $U(1_{E \setminus A}) = 0$ on $A$ $m$-a.e. then $0 = \int 1_A U(1_{E \setminus A}) dm = \int 1_{E \setminus A} U^{\ast}(1_A) dm$, hence $U^{\ast}(1_A) = 0$ on $E \setminus A$ $m$-a.e.
Therefore the set $A$ is also $U^{\ast}$-absorbing.

i) $\Rightarrow$ iii) Let $B = [U(1_{E \setminus A}) = 0]$. By i) we have
\[
A \subset B \quad m\mbox{-a.e.}
\]
If we put $M := B\setminus A$ then $U(1_M) \leq U(1_{E \setminus A}) = 0$ on $M$ and by the complete maximum principle we deduce that $U(1_M) = 0$, $m(M) = 0$.
It follows that $B = A$ $m$-a.e. and since $U(1_{E\setminus A}) \in \mathcal{E}(\mathcal{U})$ we get also that $1_{E\setminus B} \in \mathcal{E}(\mathcal{U})$.

The implication iii) $\Rightarrow$ iv) is clear and iv) $\Rightarrow$ i) follows by assertion a) of Remark \ref{rem 2.5} since the set $[u = 0]$ is $\mathcal{U}$-absorbing.

iii) $\Rightarrow$ v). Let $B \in \mathcal{B}$ be such that $A = B$ $m$-a.e. and $1_{E \setminus B} \in \mathcal{E}(\mathcal{U})$. 
Then the function $u$ defined by
\[
u := \left\{\begin{array}{ll}
\infty & {\rm on} \; E \setminus B\\
0 & {\rm on} \; B
\end{array}
\right.
\]
is $\mathcal{U}$-excessive and clearly $B = [u < \infty]$.

v) $\Rightarrow$ i). Let $u \in \mathcal{E}(\mathcal{U})$ be such that $A = [u < +\infty]$ $m$-a.e. and put $B := [u < \infty]$. Then $U(1_{E \setminus B}) \leq \dfrac{1}{n}u$ on $E$ for all $n$, $U(1_{E \setminus B}) = 0$ on $B$.
Therefore $B$ is $\mathcal{U}$-absorbing.
\end{proof}

\begin{coro} \label{coro 2.7} 
If $(A_n)_{n}$ is a sequence of $\mathcal{U}$-absorbing sets then $\mathop{\bigcup}\limits_{n} A_n$ and $\mathop{\bigcap}\limits_{n} A_n$ are also $\mathcal{U}$-absorbing.
\end{coro}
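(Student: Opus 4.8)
The plan is to argue directly from the definition of a $\mathcal{U}$-absorbing set, using nothing beyond the monotonicity of the positive kernel $U$ and its countable additivity on sums of positive measurable functions. Proposition \ref{prop2.4} is not actually needed for this, although it furnishes a clean alternative route via excessive functions.

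For the union $A := \bigcup_n A_n$, I would start from $E \setminus A = \bigcap_n (E\setminus A_n)$, which gives $1_{E\setminus A} \le 1_{E\setminus A_n}$ pointwise for every $n$. Applying $U$ yields $U(1_{E\setminus A}) \le U(1_{E\setminus A_n})$, and since each $A_n$ is $\mathcal{U}$-absorbing the right-hand side vanishes $m$-a.e. on $A_n$; hence $U(1_{E\setminus A}) = 0$ $m$-a.e. on $A_n$ for every $n$. The set $\{x \in A : U(1_{E\setminus A})(x) > 0\}$ is the union over $n$ of the sets $\{x \in A_n : U(1_{E\setminus A})(x) > 0\}$, each $m$-negligible, and a countable union of $m$-null sets is $m$-null; therefore $U(1_{E\setminus A}) = 0$ $m$-a.e. on $A$, i.e. $A$ is $\mathcal{U}$-absorbing.

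For the intersection $A := \bigcap_n A_n$ one has instead $E\setminus A = \bigcup_n(E\setminus A_n)$, so $1_{E\setminus A}\le \sum_n 1_{E\setminus A_n}$ and, $U$ being a kernel, $U(1_{E\setminus A}) \le \sum_n U(1_{E\setminus A_n})$. Since $A \subseteq A_n$ for every $n$ and $A_n$ is $\mathcal{U}$-absorbing, each term $U(1_{E\setminus A_n})$ is $0$ $m$-a.e. on $A$; discarding the corresponding countable union of $m$-null sets, the whole series is $0$ $m$-a.e. on $A$, whence $U(1_{E\setminus A}) = 0$ $m$-a.e. on $A$. (Alternatively, one may choose $u_n \in \mathcal{E}(\mathcal{U})$ with $A_n = [u_n = 0]$ $m$-a.e. as in Proposition \ref{prop2.4}, replace $u_n$ by $u_n \wedge 1 \in \mathcal{E}(\mathcal{U})$, form the $\mathcal{U}$-excessive function $v := \sum_n 2^{-n}u_n$, note that $[v = 0] = \bigcap_n A_n$ $m$-a.e., and conclude by the equivalence iv)$\Leftrightarrow$i) there; the union case then follows by combining this with the duality i)$\Leftrightarrow$ii) of Proposition \ref{prop2.4} applied to $\mathcal{U}^\ast$, which is again a sub-Markovian resolvent of kernels for which $m$ is sub-invariant.)

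There is no genuine obstacle here; the statement is essentially a formal consequence of the definition together with the elementary properties of $U$ as a positive kernel. The only point requiring a little care — and it is routine — is the bookkeeping of the $m$-negligible exceptional sets attached to each ``$m$-a.e.'' assertion: in both cases one must check that the relevant countable union of such null sets is again null before concluding.
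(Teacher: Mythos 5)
Your main argument is correct, and it takes a genuinely different, more elementary route than the paper. The paper's proof goes through the excessive-function characterization: by Proposition \ref{prop2.4} it picks $u_n\in\mathcal{E}(\mathcal{U})$ with $A_n=[u_n=0]$ $m$-a.e., notes that $u:=\inf_n u_n$ satisfies $\alpha U_\alpha u\le u$ $m$-a.e., regularizes $u$ to an excessive $m$-version via Proposition \ref{prop 2.2}, and applies Proposition \ref{prop2.4} again to handle the union; the intersection is then obtained by duality, i.e. by the equivalence i) $\Leftrightarrow$ ii) of Proposition \ref{prop2.4} applied to $\mathcal{U}^\ast$. You instead argue straight from the definition: monotonicity of the initial kernel $U$ settles the union, countable subadditivity of $U$ settles the intersection, and in each case only a countable union of $m$-null sets is discarded. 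Your route buys brevity and generality: it uses neither $\mathcal{E}(\mathcal{U})$ nor the adjoint resolvent (hence not even the sub-invariance of $m$), so it works for any $\sigma$-finite reference measure; the paper's route has the merit of staying inside the excessive-function/duality machinery that the rest of Section 2 is built on. One caveat concerns only your parenthetical alternative: in this generality $u_n\wedge 1$ is merely $\mathcal{U}$-supermedian, not necessarily excessive (min-stability of $\mathcal{E}(\mathcal{U})$ is an extra hypothesis, cf. Proposition \ref{prop 2.0}), so $v=\sum_n 2^{-n}(u_n\wedge 1)$ is a priori only supermedian; either drop the truncation (a countable sum of excessive functions is excessive, values $+\infty$ being allowed) or invoke Proposition \ref{prop 2.2} to pass to an excessive $m$-version of $v$ before applying iv) $\Rightarrow$ i) of Proposition \ref{prop2.4}. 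This does not affect your main, direct proof.
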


\begin{proof}
By Proposition \ref{prop2.4} for every $n$ there exists $u_n \in \mathcal{E}(\mathcal{U})$ such that $A_n = [u_n = 0]$ $m$-a.e. 
Let $u := \mathop{\inf}\limits_{n} u_n$. 
Then $[u = 0] = \mathop{\bigcup}\limits_{n}[u_n = 0]$ and $\alpha U_{\alpha} u \leq u$ $m$-a.e. for all $\alpha > 0$.
From Proposition \ref{prop 2.2} and using again Proposition \ref{prop2.4} we conclude that $\mathop{\bigcup}\limits_{n}A_n$ is $\mathcal{U}$-absorbing.
The equivalence i) $\Leftrightarrow$ ii) in the above proposition implies now that $\mathop{\bigcap}\limits_{n} A_n$ is also $\mathcal{U}$-absorbing.
\end{proof}

\begin{defi*}
The resolvent $\mathcal{U} = (U_{\alpha})_{\alpha > 0}$ is named {\it $m$-irreducible} provided that there exists 
no nontrivial $\mathcal{U}$-absorbing set (with respect to $m$), i.e., if $A \in \mathcal{B}$ is $\mathcal{U}$-absorbing then either $m(A) = 0$ or $m(E \setminus A) = 0$.
\end{defi*}

By Proposition \ref{prop2.4} it follows that $\mathcal{U}$ and $\mathcal{U}^{\ast}$ are simultaneously $m$-irreducible.

The next result expresses the dichotomy of $\mathcal{U}$ under the assumption of irreducibility.

\begin{prop} \label{prop 2.5} 
Assume that $\mathcal{U}$ is $m$-irreducible.

Then the resolvent $\mathcal{U} = (U_{\alpha})_{\alpha > 0}$ is either $m$-transient or $m$-recurrent.
\end{prop}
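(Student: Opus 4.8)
The plan is to exhibit, from a single well-chosen reference function, one $\mathcal{U}$-absorbing set whose two possible ``sizes'' under $m$-irreducibility correspond precisely to $m$-transience and to $m$-recurrence. Since $m$ is $\sigma$-finite, I first fix a function $g \in p\mathcal{B} \cap L^1(E,m)$ with $0 < g \leq 1$ $m$-a.e.; concretely, writing $E = \bigcup_n E_n$ as a disjoint union of sets of finite measure, one may take $g := \sum_n 2^{-n}(1 + m(E_n))^{-1} 1_{E_n}$. The whole argument then revolves around the set
\[
A := [\, Ug < +\infty \,].
\]

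The first step is to show that $A$ is $\mathcal{U}$-absorbing (with respect to $m$). Directly from the resolvent equation one has $\alpha U_\alpha (Ug) = Ug - U_\alpha g \leq Ug$ for all $\alpha > 0$, so $Ug$ is $\mathcal{U}$-supermedian; by Proposition \ref{prop 2.2} there is a $\mathcal{U}$-excessive function $u$ with $u = Ug$ $m$-a.e., and hence $A = [\, u < +\infty \,]$ $m$-a.e. The equivalence (i) $\Leftrightarrow$ (v) of Proposition \ref{prop2.4}, combined with Remark \ref{rem 2.5}~a), then yields that $A$ is $\mathcal{U}$-absorbing. (If one wishes to bypass the excessive regularization, one could instead truncate $Ug$ on the level sets $[Ug \leq n]$ and invoke the complete maximum principle as in the proof of Proposition \ref{prop2.1}; but routing the argument through Propositions \ref{prop 2.2} and \ref{prop2.4} is cleaner.)

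Now I invoke the hypothesis that $\mathcal{U}$ is $m$-irreducible: since $A$ is $\mathcal{U}$-absorbing, either $m(A) = 0$ or $m(E \setminus A) = 0$. In the latter case $Ug < +\infty$ $m$-a.e., and since $0 < g \leq 1$ $m$-a.e. with $g \in p\mathcal{B} \cap L^1(E,m)$, assertion (iii) of Proposition \ref{prop2.1} shows that $\mathcal{U}$ is $m$-transient. In the former case $Ug = +\infty$ $m$-a.e., so $g$ witnesses assertion (iii) of Proposition \ref{prop2.3}, whence $\mathcal{U}$ is $m$-recurrent. These two cases are exhaustive, which proves the dichotomy.

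I expect the only genuinely delicate point to be the claim that $[Ug < +\infty]$ is $\mathcal{U}$-absorbing: one must be careful that $Ug$ is $\mathcal{U}$-supermedian and that passing to its excessive regularization via Proposition \ref{prop 2.2} leaves the set $[\,\cdot < +\infty\,]$ unchanged up to an $m$-null set. Once this is in place, the rest is just reading off the definitions of $m$-transience and $m$-recurrence through Propositions \ref{prop2.1} and \ref{prop2.3}.
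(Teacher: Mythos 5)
Your proof is correct, and it runs on the same engine as the paper's own argument --- namely that a set of the form $[Uh<\infty]$ is $\mathcal{U}$-absorbing (via Propositions \ref{prop 2.2} and \ref{prop2.4}) and that $m$-irreducibility forces such a set to be $m$-null or $m$-co-null --- but you organize it differently. The paper argues contrapositively: assuming $\mathcal{U}$ is not $m$-recurrent, it takes a witness $f$ with $m([0<Uf<\infty])>0$, uses the two absorbing sets $[Uf<\infty]$ and $[Uf=0]$, and concludes $m$-transience; you instead fix one strictly positive $g\in p\mathcal{B}\cap L^1(E,m)$ from the outset and read the whole dichotomy off the single absorbing set $[Ug<\infty]$. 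Your variant has the advantage that strict positivity of the transience witness is built in from the start: in the paper's proof the final assertion that $f>0$ $m$-a.e. does not literally follow from $m([Uf=0])=0$ (one only gets $Uf>0$ $m$-a.e.), whereas your fixed $g$ makes this a non-issue, at the modest cost of the explicit construction of $g$ from $\sigma$-finiteness. Two small corrections: in the transient case you should invoke the definition of $m$-transience directly (your $g$ satisfies exactly condition i) of that definition), not assertion iii) of Proposition \ref{prop2.1}, since you know $Ug<\infty$ $m$-a.e. but not that $Ug$ is bounded; and the supermedian inequality $\alpha U_\alpha(Ug)\le Ug$ is better derived from the identity $Ug=U_\alpha g+\alpha U_\alpha Ug$ (used in the paper in the proof of Proposition \ref{prop2.3}) rather than by subtraction, which is ill-defined on $[Ug=\infty]$. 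Neither point affects the validity of the argument.
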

\begin{proof}
Suppose that $\mathcal{U}$ is not $m$-recurrent, then there exists $f \in p\mathcal{B} \cap L^1(E, m)$ such that $m([0 < U f < \infty]) > 0$. 
Then $m([U f< \infty]) > 0$ and $m([U f > 0]) > 0$ and by Proposition \ref{prop2.4} the sets $[U f < +\infty]$ and $[U f = 0]$ are $m$-absorbing. 
Since $\mathcal{U}$ is $m$-irreducible, we deduce that $0 = m([U f = +\infty]) = m([U f = 0])$. 
Therefore we have $m$-a.e. $U f < \infty$ and $f > 0$, hence $\mathcal{U}$ is $m$-transient.
\end{proof}

\begin{rem}  
Proposition \ref{prop 2.5} was already proved in the case of a strongly continuous sub-Markovian resolvent on $L^2(E,m)$ and we refere to \cite{Fu07}, Theorem 1.1, i) and \cite{FuOsTa11}, Lemma 1.6.4 iii). 
We also recall that in the case of a convolution semigroup on $\mathbb{R}^d$ then by \cite{Fu07}, Theorem 1.2, the dichotomy still holds without requiring irreducibility.
\end{rem}

Recall that if $\beta > 0$ and $f \in p\mathcal{B}$ we may consider the ($\beta$-order) {\it reduced function} of $f$, defined by
\[
R_{\beta} f := \inf \{ v \in \mathcal{S}(\mathcal{U}_{\beta}) : v \geq f\}.
\]

Due to a result of Mokobodzki (see for example \cite{BeBo04}, Theorem 1.1.9) we have that $R_{\beta} f$ is $\mathcal{B}$-measurable and it is $\mathcal{U}_{\beta}$-supermedian. 
Notice that if $\beta' < \beta$ then $\mathcal{S}(\mathcal{U}_{\beta'})\subset \mathcal{S}(\mathcal{U}_{\beta})$ and consequently $R_{\beta} f \leq R_{\beta'} f$. 
Therefore if $f \in p\mathcal{B}$, we may consider $R_0 f$, the $0$-order reduced function of $f$, defined by
\[
R_0 f(x) := \mathop{\sup}\limits_{\beta} R_{\beta}f(x) = \mathop{\lim}\limits_{\beta \searrow 0} R_{\beta} f(x), \; x \in E.
\]
It follows that $R_0 f$ is an $\mathcal{U}$-supermedian function. 
If $u \in \mathcal{S}(\mathcal{U})$ and $A \in \mathcal{B}$ then $R_0 (1_{A}u)\in \mathcal{S}(\mathcal{U}) = \mathop{\bigcap}\limits_{\beta > 0} \mathcal{S}(\mathcal{U}_{\beta})$, it is dominated by $u$ and equal to $u$ on $A$. 
Therefore
\[
R_0 (1_Au) = \inf \{ v\in \mathcal{S}(\mathcal{U}) : v\geq u \; {\rm on} \; A \}.
\]

As we mentioned above, the resolvent may not possess $0$-order excessive functions other than the constant functions (with respect to $m$). 
This is the case if and only if the resolvent is irreducible recurrent and we express this fact in the next proposition (for equivalence i) $\Leftrightarrow$ ii) below see also \cite{Fu07}, Theorem 1.1, ii)). 
For a probabilistic approach (in terms of an $m$-symmetric right process) of implication i) $\Rightarrow$ iv) we refer to \cite{FuOsTa11}, Theorem 4.7.1, where condition iv) below holds q.e. (i.e. outside some $m$-polar set) and not only $m$-a.e.

\begin{prop} \label{prop 2.9} 
The following assertions are equivalent.

i) $\mathcal{U}$ is $m$-irreducible and $m$-recurrent.

\vspace{0.1cm}

ii) For every $f \in p\mathcal{B} \cap L^1(E, m)$ with $\int f dm > 0$ we have $U f = +\infty$ $m$-a.e.

\vspace{0.1cm}

iii) If $ f \in p\mathcal{B}$ then $m$-a.e. we have either $U f = 0$ or $U f = +\infty$.

\vspace{0.1cm}

iv) We have $m$-a.e. that every $\mathcal{U}$-excessive function is constant and $\alpha U_{\alpha} 1 = 1$, $\alpha > 0$.
\end{prop}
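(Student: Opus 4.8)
The plan is to prove the cycle i) $\Rightarrow$ ii) $\Rightarrow$ iii) $\Rightarrow$ i) and, separately, i) $\Leftrightarrow$ iv), using Propositions \ref{prop 2.2}, \ref{prop2.3} and \ref{prop2.4} throughout; condition iii) is read as the dichotomy that for each $f\in p\mathcal{B}$ either $Uf=0$ $m$-a.e.\ or $Uf=+\infty$ $m$-a.e. For i) $\Rightarrow$ ii), let $f\in p\mathcal{B}\cap L^1(E,m)$ with $\int f\,dm>0$: recurrence (condition iv) of Proposition \ref{prop2.3}) gives $Uf=+\infty$ $m$-a.e.\ on $[f>0]$, so $m([Uf=+\infty])>0$, while $Uf$ satisfies $\alpha U_\alpha(Uf)\le Uf$, hence agrees $m$-a.e.\ with a $\mathcal{U}$-excessive function by Proposition \ref{prop 2.2}, so $[Uf<+\infty]$ is $\mathcal{U}$-absorbing by Proposition \ref{prop2.4}; by irreducibility $m([Uf<+\infty])=0$, i.e.\ $Uf=+\infty$ $m$-a.e. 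For ii) $\Rightarrow$ iii), given $f\in p\mathcal{B}$ pick (using $\sigma$-finiteness of $m$) some $g_0\in p\mathcal{B}\cap L^1(E,m)$ with $0<g_0\le 1$ $m$-a.e.\ and set $g:=\inf(f,g_0)$; if $\int g\,dm>0$ then $Ug=+\infty$ $m$-a.e.\ by ii) and $Uf\ge Ug$, while if $\int g\,dm=0$ then $f=0$ $m$-a.e., so $\int U_\alpha f\,dm=\int f\,U^{\ast}_\alpha 1\,dm=0$ by (\ref{eq 2.1}) and thus $U_\alpha f=0$ $m$-a.e.\ for all $\alpha$, giving $Uf=0$ $m$-a.e.

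For iii) $\Rightarrow$ i), recurrence is immediate from the definition, since iii) forces $m([0<Uf<+\infty])=0$ for every $f\in p\mathcal{B}\cap L^1(E,m)$. For irreducibility, let $A$ be $\mathcal{U}$-absorbing; after replacing $A$ by an $m$-equivalent set we may assume $1_{E\setminus A}\in\mathcal{E}(\mathcal{U})$, by Proposition \ref{prop2.4}. Apply iii) to $f:=1_{E\setminus A}$. If $Uf=0$ $m$-a.e., then $U_\alpha f\le Uf=0$ $m$-a.e., hence $\alpha U_\alpha f=0$ $m$-a.e., and since $f$ is $\mathcal{U}$-excessive, $f=\lim_{\alpha\to\infty}\alpha U_\alpha f=0$ $m$-a.e., that is $m(E\setminus A)=0$. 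If $Uf=+\infty$ $m$-a.e., then $U(1_{E\setminus A})=+\infty$ $m$-a.e.\ on $A$, which contradicts $U(1_{E\setminus A})=0$ $m$-a.e.\ on $A$ (the defining property of an absorbing set) unless $m(A)=0$. In both cases $A$ is $m$-trivial, so $\mathcal{U}$ is $m$-irreducible.

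For i) $\Leftrightarrow$ iv): assuming iv), if $A$ is $\mathcal{U}$-absorbing, Proposition \ref{prop2.4} supplies a $\mathcal{U}$-excessive $u$ with $A=[u=0]$ $m$-a.e., and by iv) $u$ is $m$-a.e.\ constant, so $m(A)\in\{0,m(E)\}$; moreover every $\mathcal{U}$-excessive $v$ is $m$-a.e.\ constant, so $\alpha U_\alpha v=v\,\alpha U_\alpha 1=v$ $m$-a.e., and condition v$'$) of Proposition \ref{prop2.3} yields recurrence. Conversely, assuming i): the supermedian function $1$ has $\mathcal{U}$-excessive regularization $\widehat{1}:=\lim_{\alpha\to\infty}\alpha U_\alpha 1\le 1$, and recurrence gives $\alpha U_\alpha\widehat{1}=\widehat{1}$ $m$-a.e.; hence $w:=1-\widehat{1}\ge 0$ satisfies $\alpha U_\alpha w=\alpha U_\alpha 1-\widehat{1}\le w$ $m$-a.e., so by Proposition \ref{prop 2.2} $w$ is $m$-a.e.\ $\mathcal{U}$-excessive and, by recurrence, $\alpha U_\alpha w=w$ $m$-a.e.; comparing the two expressions for $\alpha U_\alpha w$ gives $\alpha U_\alpha 1=1$ $m$-a.e.\ for all $\alpha$. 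Consequently the nonnegative constants, and then $\inf(u,c)$ for any $\mathcal{U}$-excessive $u$ and constant $c$, are $m$-a.e.\ $\mathcal{U}$-excessive. If some $\mathcal{U}$-excessive $u$ were not $m$-a.e.\ constant, then, reducing first to the case $u<+\infty$ $m$-a.e.\ (otherwise $[u<+\infty]$ is absorbing and $m$-trivial), there would be $a<b$ with $m([u\le a])>0$ and $m([u\ge b])>0$; the bounded function $w:=\inf(u,b)-\inf(u,a)\ge 0$ then satisfies $\alpha U_\alpha w=w$ $m$-a.e.\ by condition v$'$) of Proposition \ref{prop2.3}, so by Propositions \ref{prop 2.2} and \ref{prop2.4} the set $[w=0]\supseteq[u\le a]$ is $\mathcal{U}$-absorbing of positive measure, hence of full measure by irreducibility, contradicting $w=b-a>0$ on $[u\ge b]$.

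The main obstacle I anticipate is recovering $m$-irreducibility in iii) $\Rightarrow$ i) and the constancy of $\mathcal{U}$-excessive functions in i) $\Rightarrow$ iv): both depend on the identification of $\mathcal{U}$-absorbing sets with the level sets $[u=0]$ and $[u<+\infty]$ of $\mathcal{U}$-excessive functions (Proposition \ref{prop2.4}), together with the fact, itself a byproduct of recurrence via Propositions \ref{prop 2.2} and \ref{prop2.3}, that any nonnegative function dominating $m$-a.e.\ its own $\alpha$-potentials is $m$-a.e.\ $\mathcal{U}$-excessive, which is what makes condition v$'$) of Proposition \ref{prop2.3} applicable; the recurring technical care is in handling the $\alpha$-dependent $m$-negligible exceptional sets.
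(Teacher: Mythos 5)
Your proposal is correct. Its overall skeleton (the cycle i) $\Rightarrow$ ii) $\Rightarrow$ iii) $\Rightarrow$ i) plus the separate equivalence i) $\Leftrightarrow$ iv)) and the arguments for i) $\Rightarrow$ ii), ii) $\Rightarrow$ iii), iii) $\Rightarrow$ i) and iv) $\Rightarrow$ i) essentially coincide with the paper's (your reading of iii) as the global dichotomy is also the one the paper's own proof of iii) $\Rightarrow$ i) uses, and getting recurrence directly from the definition is in fact cleaner than the paper's detour). The genuinely different part is i) $\Rightarrow$ iv): the paper proves constancy of $\mathcal{U}$-excessive functions via the zero-order reduced function $R_0(1_G u)$ (Mokobodzki's theorem) together with a Riesz-type decomposition $u = v + w$, $w \in b\mathcal{S}(\mathcal{U})$, and a supremum over thresholds $\alpha_0$; you instead truncate, use recurrence (Proposition \ref{prop2.3}, v) or v$'$)) to make $\alpha U_\alpha$ fix $\inf(u,a)$ and $\inf(u,b)$ $m$-a.e., and then exhibit $[\inf(u,b)-\inf(u,a)=0] \supseteq [u\le a]$ as a $\mathcal{U}$-absorbing set of positive measure, which irreducibility forces to be of full measure, contradicting $\inf(u,b)-\inf(u,a)=b-a$ on $[u\ge b]$. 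This avoids the reduced-function machinery entirely and mirrors the truncation/absorbing-level-set technique the paper itself uses in the proof of Theorem \ref{thm 2.19}, at the price of invoking Propositions \ref{prop 2.2} and \ref{prop2.4} once more. Two small points of hygiene: as literally written, ``$\alpha U_\alpha w = w$ by v$'$)'' applies v$'$) to a difference of excessive functions, which need not be excessive; the correct (and clearly intended) step is to apply v) or v$'$) to $\inf(u,b)$ and $\inf(u,a)$ separately and subtract, which your preceding sentence already prepares. Likewise, $\alpha U_\alpha 1 = 1$ $m$-a.e.\ follows at once from Proposition \ref{prop2.3}, v) applied to the bounded supermedian function $1$; your detour through the excessive regularization $\widehat{1}$ is correct but unnecessary.
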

\begin{proof}
i) $\Rightarrow$ ii). Let $f \in p\mathcal{B} \cap L^1(E, m)$ with $\int f dm > 0$ and $A := [f > 0]$, then $m(A) > 0$ and by Proposition \ref{prop2.3} (since $\mathcal{U}$ is $m$-recurrent) it follows that
\[
m([U f = +\infty]) \geq m([U f = +\infty] \cap A) > 0.
\]
The set $[U f < +\infty]$ is $\mathcal{U}$-absorbing (c.f. Proposition \ref{prop2.4}) and therefore, $\mathcal{U}$ being $m$-irreducible, we deduce that $m([U f < +\infty]) = 0$, hence $U f = +\infty$ $m$-a.e.

ii) $\Rightarrow$ iii). Let $g \in pb\mathcal{B} \cap L^1(E, m)$, $g > 0$. 
If $f \in p\mathcal{B}$ and $f_n := \inf(f, ng)$, $n \in \mathbb{N}^{\ast}$, then $(f_n)_n \subset p\mathcal{B} \cap L^1(E, m)$ and $f_n \nearrow f$ pointwise. 
If $f \in p\mathcal{B}$ and $m([U f > 0]) > 0$ then $m(U f) > 0$ and therefore $\int f dm > 0$. 
We consider $n_0 \in \mathbb{N}^{\ast}$ such that $\int f_{n_0} dm > 0$ and by hypothesis ii) we get $m$-a.e. $U f \geq U f_{n_0} = +\infty$.

iii) $\Rightarrow$ i). Let $f \in b\mathcal{B} \cap L^1(E, m)$, $f > 0$. Then $U f > 0$ and therefore by iii) we deduce that $U f = +\infty$ $m$-a.e.
From Proposition \ref{prop2.3} we conclude that $\mathcal{U}$ is $m$-recurrent.

Let now $A$ be $m$-absorbing. We may assume that $1_{E \setminus A} \in \mathcal{E}(\mathcal{U})$ (see Proposition \ref{prop2.4}). 
Therefore we have $U(1_{E \setminus A}) =0$ on $A$ and $U(1_{E \setminus A}) > 0$ on $E \setminus A$.
If $m(A) > 0$ then by hypothesis iii) we get $U(1_{E \setminus A}) = 0$ $m$-a.e. and therefore $m(E \setminus A) = 0$.

i) $\Rightarrow$ iv). Let $u \in \mathcal{E}(\mathcal{U})$ be such that $\int_{E} u dm > 0$. 
We may assume that $u \leq 1$ $m$-a.e. and notice that if $v \in \mathcal{S}(\mathcal{U})$ and $v \leq u$ $m$-a.e. then (cf. Proposition \ref{prop2.3}) there exists $w \in b\mathcal{S}(\mathcal{U})$ such that $u = v + w$ $m$-a.e.

Let $G \in \mathcal{B}$ such that $m(G) > 0$. We claim that 
\[
R_0(1_{G}u) = u \quad m\mbox{-a.e.},
\]
Indeed, if $w \in b\mathcal{S}(\mathcal{U})$ is such that $u = R_0(1_G u) + w$ $m$-a.e., because $R_0(1_G u) = u$ on $G$ we get that $[w = 0] \supset G$ $m$-a.e., hence $m([w = 0]) \geq m(G) > 0$. 
Since $\mathcal{U}$ is $m$-irreducible we conclude that $w = 0$ $m$-a.e.

For every $\alpha \in (0, 1]$ we consider the set $G_{\alpha} \in \mathcal{B}$ defined by
$G_{\alpha} := [u > \alpha].$
By the above considerations we deduce that if $m(G_{\alpha}) > 0$ then $m$-a.e. we have $R_0(1_{G_{\alpha}} u) = u$ and $R_0 1_{G_{\alpha}} = 1$. 
From $\alpha \leq u$ on $G_{\alpha}$ it follows that $\alpha \leq u$ $m$-a.e. on $E$.

Let further
$\alpha_0 := \sup\{ \alpha > 0 : m(G_{\alpha}) > 0 \}.$ 
Then $u \geq \alpha_0$ $m$-a.e. and $m(G_{\alpha}) = 0$, hence $u = \alpha_0$ $m$-a.e.

iv) $\Rightarrow$ i). By Proposition \ref{prop2.3} it follows clearly that $\mathcal{U}$ is $m$-recurrent. 
If $A$ is $m$-absorbing then by Proposition \ref{prop2.4} there exists $B \in \mathcal{B}$ such that $A = B$ $m$-a.e. and $1_{E \setminus B} \in \mathcal{E}(\mathcal{U})$.
Since by hypothesis the function $1_{E \setminus B}$ should be $m$-a.e. a constant, we get that either $m(B) = 0$ or $m(E \setminus B) = 0$. 
Therefore $\mathcal{U}$ is $m$-irreducible.
\end{proof}

\noindent
{\bf Transience, recurrence, and irreducibility of a right process.}
In this subsection $\mathcal{U}$ is the resolvent of a right (Markov) process $X=(\Omega, \mathcal{F}, \mathcal{F}_t, X_t, P^x)$ with values in $E$, and $m$ is a sub-invariant $\sigma$-finite measure.

\begin{defi*} (cf. \cite{Ge80}) 
i) The resolvent $\mathcal{U}$ (or the process $X$) is called {\it transient} provided there exists a strictly positive Borel measurable function $f$ such that $Uf <\infty$.

ii) The resolvent $\mathcal{U}$ (or the process $X$) is called {\it recurrent} if $U1_B=0$ or $U1_B=\infty$ for all $B \in \mathcal{B}$.
\end{defi*}

\begin{rem}   
i) By \cite{Ge80}, Proposition 2.2 and Proposition 2.4, the following probabilistic characterizations hold: i.1) $\mathcal{U}$ is transient if and only if there exists a sequence of Borel finely open sets $(B_n)_{n \geq 1}$ increasing to $E$ such that the last exit time of $B_n$ is finite $P_x$-a.e. for all $x \in E$.
i.2) $\mathcal{U}$ is recurrent if and only if any excessive function is constant, and furthermore, if and only if the last exit time of any finely open set is infinite almost surely. 

ii) Following the lines of \cite{FuOsTa11}, Lemma 4.8.1 one can show that recurrence as defined above is, as a matter of fact, equivalent with the (apparently stronger) so called Harris recurrence: 
$\int\limits^\infty_0 1_B(X_s)ds = \infty$ $P^x$-a.s. for all $x \in E$ whenever $B \in \mathcal{B}$ with $U(1_B) > 0$.
\end{rem}

Recall that a set $A \in \mathcal{B}$ is called {\it absorbing} if there exists an excessive function $v \in p\mathcal{B}$ such that $A=[v=0]$. 
We remark that $A$ is absorbing if and only if $1_{E \setminus A}$ is excessive, and if and only if there exists an excessive function $v \in p\mathcal{B}$ such that $A=[v < \infty]$. If $B \in \mathcal{B}$ is $m$-negligible  such that $E \setminus B$ is absorbing then the set $B$ is named $m${\it -inessential}.

As in \cite{BeRo11}, Section 3, if $A \in \mathcal{B}$ such that $E \setminus A$ is $m$-inessential, then we may consider the following two modifications of $\mathcal{U}$:

- the {\it restriction} $\mathcal{U}{'}$ of $\mathcal{U}$ on $A$, i.e. the sub-Markovian resolvent of kernels on $(A, \mathcal{B}|_A)$ defined as:
\[
U'_\alpha  f = U_\alpha \overline{f} |_A  \; \; \; \mbox{for all } f  \in p\mathcal{B}|_A,
\]
where $\overline{f} \in p\mathcal{B}$ is such that $\overline{f}|_A = f$. 

- the {\it ($1$-order) trivial modification of $\mathcal{U}$ on A}, namely the sub-Markovian resolvent
$\mathcal{U}^A = (U^A_\alpha)_{\alpha > 0}$ on $(E, \mathcal{B})$ defined by
\[
U^A_\alpha f = 1_A U_\alpha (f 1_A) + \frac{1}{1+\alpha} f 1_{E \setminus A} \; \; \; \alpha > 0, f \in p\mathcal{B}.
\]
Then both of the above resolvents induced by $\mathcal{U}$ and $A$ are the resolvents of some right processes with state spaces $(A, \mathcal{B}|_A)$, respectively $(E, \mathcal{B})$. 

\begin{rem} \label{rem 2.1.15}  
i) $\mathcal{U}^A$ is an $m$-version of $\mathcal{U}$, that is $U^A_\alpha f = U_\alpha f$, $\alpha > 0$, $m$-a.e. for all $f \in p\mathcal{B}$.

ii) $\mathcal{U}$ is $m$-transient, $m$-recurrent, or $m$-irreducible if and only if $\mathcal{U}{'}$, and hence $\mathcal{U}^A$, are $m$-transient, $m$-recurrent, or $m$-irreducible, respectively.
\end{rem}

\begin{prop} \label{prop 2.1.14}  
The following assertions are equivalent.

i) $\mathcal{U}$ is $m$-transient.

\vspace{0.1cm}

ii) There exists a Borel set $A$ such that $E \setminus A$ is $m$-inessential and the $1$-order trivial modification $\mathcal{U}^A$ is transient.
\end{prop}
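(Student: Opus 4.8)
The plan is to prove the two implications separately. The implication (ii)$\Rightarrow$(i) is a short reduction, whereas (i)$\Rightarrow$(ii) requires constructing the absorbing set $A$ explicitly, so as to upgrade the ``$m$-a.e.'' transience of $\mathcal{U}$ to the genuine (pointwise) transience of the process associated with $\mathcal{U}^A$.

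For (ii)$\Rightarrow$(i): assume $\mathcal{U}^A$ is transient for some Borel $A$ with $E\setminus A$ $m$-inessential, and pick a strictly positive $f\in p\mathcal{B}$ with $U^A f<\infty$ on $E$. Choosing a strictly positive $h\in p\mathcal{B}\cap L^1(E,m)$ (available since $m$ is $\sigma$-finite) and replacing $f$ by $\inf(f,h)$, the monotonicity of $U^A$ yields an $L^1$-witness of the $m$-transience of $\mathcal{U}^A$; by Remark \ref{rem 2.1.15}\,ii) (equivalently, since $\mathcal{U}^A$ is an $m$-version of $\mathcal{U}$), $\mathcal{U}$ is $m$-transient.

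For (i)$\Rightarrow$(ii): by the implication i) $\Rightarrow$ iii) of Proposition \ref{prop2.1} pick $f_0\in p\mathcal{B}\cap L^1(E,m)$ with $0<f_0\leq 1$ $m$-a.e.\ and $Uf_0$ bounded; replacing $f_0$ by $\inf(f_0,1)$ we may assume $0\leq f_0\leq 1$ on all of $E$. The delicate point is that $f_0$ may vanish --- and $Uf_0$ may be infinite --- on an $m$-null set that need not be inessential, so I would first repair $f_0$ there: set $N:=[f_0=0]\in\mathcal{B}$ (so $m(N)=0$) and $g:=f_0+1_N$, so that $g\in p\mathcal{B}$, $0<g\leq 2$ on all of $E$, and $g=f_0$ $m$-a.e. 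Then $Ug=Uf_0+U1_N$ is $\mathcal{U}$-excessive, and since $m$ is sub-invariant with $m(N)=0$ one has $\int\alpha U_\alpha 1_N\,dm\leq m(N)=0$, hence $U1_N=0$ $m$-a.e., so $Ug$ is bounded $m$-a.e. Therefore $A:=[Ug<\infty]$ is $\mathcal{U}$-absorbing (recall $[v<\infty]$ is absorbing for every excessive $v\in p\mathcal{B}$) and $m(E\setminus A)=0$, i.e.\ $E\setminus A$ is $m$-inessential.

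It remains to check that $\mathcal{U}^A$ is transient with this $A$. From the definition of $\mathcal{U}^A$ its initial kernel is $U^A f=1_A\,U(f1_A)+f1_{E\setminus A}$, hence $U^A g=U(g1_A)\leq Ug<\infty$ on $A$ and $U^A g=g\leq 2<\infty$ on $E\setminus A$; since $g$ is a strictly positive Borel function, $\mathcal{U}^A$ is transient, which is assertion (ii). I expect the only real obstacle to be the one addressed in the previous paragraph: bridging the gap between the measure-theoretic notion of $m$-transience and the pointwise notion of transience of a process. Removing the $m$-inessential set $E\setminus A$ and replacing the possibly vanishing $f_0$ by the repaired function $g$ are precisely what closes this gap, and some care is needed to ensure that $g$ is positive and $Ug$ finite everywhere on $A$, not merely $m$-a.e.
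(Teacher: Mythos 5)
Your proposal is correct and follows essentially the same route as the paper: for i)$\Rightarrow$ii) the paper also modifies $f_0$ to be strictly positive everywhere, takes $A:=[Uf_0<\infty]$ (absorbing since $Uf_0$ is excessive, with $m(E\setminus A)=0$), and checks transience of $\mathcal{U}^A$ via the explicit formula for its initial kernel, while ii)$\Rightarrow$i) is the same easy reduction. Your version merely spells out the $m$-null-set repair ($U1_N=0$ $m$-a.e.) and uses the bounded potential from Proposition \ref{prop2.1} iii) instead of the bare definition, which changes nothing essential.
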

\begin{proof}
Since the implication ii) $\Rightarrow$ i) is clear, we prove only the converse. 
Let $f_0 \in p\mathcal{B}$ such that $m$-a.e. we have that $f_0 > 0$ and $Uf_0 < \infty$.
Clearly, we may assume that $f_0 > 0$ on E.
If $A:= [Uf_0 < \infty]$ then $m(E \setminus A) = 0$ hence $E \setminus A$ is $m$-inessential.
Finally, if $p\mathcal{B} \ni f_1 = f_0$ on $A$ then $U^A f_1 = 1_A U(f_0 1_A) + \frac{1}{1+\alpha}f_{1}1_{E \setminus A} < \infty$ on $E$.
\end{proof}

We say that $\mathcal{U}$ is {\it irreducible} if  for any absorbing set $A \in \mathcal{B}$ we have either $A = \emptyset$ or $A = E$.

\begin{prop} \label{prop 2.1.16}  
The following assertions are equivalent.

i) $\mathcal{U}$ is irreducible.

\vspace{0.1cm}

ii) For every $\alpha$-sub-invariant measure $\mu$ we have that $\mu$ is a reference measure, $\alpha \geq 0$.

\vspace{0.1cm}

iii) $\mathcal{U}$ is $m$-irreducible and $m$ is a reference measure.
\end{prop}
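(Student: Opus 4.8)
The plan is to prove the cycle i) $\Rightarrow$ ii) $\Rightarrow$ iii) $\Rightarrow$ i), and I would first record two facts used throughout. \emph{Fact (1):} for every $B\in\mathcal{B}$ and $\lambda>0$ the set $[U_\lambda 1_B=0]$ is absorbing --- it equals $[u=0]$ for $u$ the $\mathcal{U}$-excessive regularization of $R_0(U_\lambda 1_B)$, using that the $\mathcal{U}_\lambda$-fine and $\mathcal{U}$-fine topologies coincide so that $[U_\lambda 1_B>0]$ is finely open --- and, by Proposition \ref{prop2.4} with an arbitrary $\sigma$-finite sub-invariant measure $\mu$ in place of $m$, a set is $\mathcal{U}$-absorbing w.r.t.\ $\mu$ iff it agrees $\mu$-a.e.\ with $[u'=0]$ for some $u'\in\mathcal{E}(\mathcal{U})$. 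Consequently, if $\mathcal{U}$ is irreducible it is $\mu$-irreducible for every such $\mu$, and (by the simultaneity noted after the definition of $m$-irreducibility, applied to $\mu$, together with Remark \ref{rem 2.5}(b)) so are $\mathcal{U}_\alpha$ and its $\mu$-adjoint. \emph{Fact (2):} for a right process $P^x(\zeta>0)=1$ for all $x$, hence $\lambda U_\lambda 1\nearrow1$, so $1\in\mathcal{E}(\mathcal{U})$ and $U_\lambda 1>0$ on $E$; and for a $\{0,1\}$-valued $v\in\mathcal{E}(\mathcal{U})$ the set $[v>0]$ is finely open, so $U_\lambda v>0$ on $[v>0]$.

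\medskip

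For i) $\Rightarrow$ ii), let $\mu\neq0$ be $\alpha$-sub-invariant and $B\in\mathcal{B}$ with $\mu(B)=0$. Pairing $1_B$ with the $\mu$-adjoint resolvent of $\mathcal{U}_\alpha$ (available by \cite{BeBo04}, Theorem 1.4.14) and using $\mu(B)=0$ gives $\int U_\lambda 1_B\,d\mu=0$ for $\lambda>\alpha$, so $U_\lambda 1_B=0$ $\mu$-a.e. By Fact (1), $[U_\lambda 1_B=0]$ is then an absorbing set of full $\mu$-measure, hence non-empty, hence $=E$ by irreducibility; the resolvent equation propagates $U_\lambda 1_B\equiv0$ to all $\lambda>0$. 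Conversely, if $U_\lambda 1_B\equiv0$, then for the $\mu$-adjoint $\mathcal{U}^\ast$ of $\mathcal{U}_\alpha$ one has $\int 1_B\,U^\ast_\lambda g\,d\mu=\int g\,U_\lambda 1_B\,d\mu=0$ for all $g\in p\mathcal{B}$, while $U^\ast_\lambda1>0$ $\mu$-a.e.\ (its zero set is $\mathcal{U}^\ast$-absorbing, $\mathcal{U}^\ast$ is $\mu$-irreducible, and it is not of full $\mu$-measure since $\int U^\ast_\lambda1\,d\mu=\int U_\lambda1\,d\mu>0$ by Fact (2)); hence $\mu(B)=0$. So $\mu$ is a reference measure.

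\medskip

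For ii) $\Rightarrow$ iii), taking $\alpha=0$, $\mu=m$ in ii) shows $m$ is a reference measure. For $m$-irreducibility, suppose for contradiction that some $A\in\mathcal{B}$ is $\mathcal{U}$-absorbing with $m(A)>0$ and $m(E\setminus A)>0$; by Proposition \ref{prop2.4} we may take $1_{E\setminus A}\in\mathcal{E}(\mathcal{U})$, so $U_\alpha 1_{E\setminus A}=0$ on $A$ and, by truncation, $U_\alpha(f1_{E\setminus A})=0$ on $A$ for all $f\in p\mathcal{B}$; therefore $m|_A$ is a non-zero $\sigma$-finite sub-invariant measure, hence a reference measure by ii). But $(m|_A)(E\setminus A)=0$, whereas $U_\lambda 1_{E\setminus A}>0$ on the non-empty set $E\setminus A$ by Fact (2) --- a contradiction. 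Finally, for iii) $\Rightarrow$ i) let $A$ be absorbing, so $1_{E\setminus A}\in\mathcal{E}(\mathcal{U})$ and $U1_{E\setminus A}=0$ on $A$, i.e.\ $A$ is $\mathcal{U}$-absorbing w.r.t.\ $m$; by $m$-irreducibility, $m(A)=0$ or $m(E\setminus A)=0$. If $m(E\setminus A)=0$, the reference property forces $U_\lambda 1_{E\setminus A}\equiv0$, impossible unless $E\setminus A=\emptyset$ by Fact (2); so $A=E$. If $m(A)=0$, the reference property forces $U_\lambda 1_A\equiv0$, but $U_\lambda 1_A=U_\lambda 1-U_\lambda 1_{E\setminus A}=U_\lambda 1>0$ on $A$ by Fact (2); so $A=\emptyset$. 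Hence $\mathcal{U}$ is irreducible.

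\medskip

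The step I expect to be the main obstacle is i) $\Rightarrow$ ii): this is where one must upgrade the $\mu$-a.e.\ identity $U_\lambda 1_B=0$ to the pointwise identity $U_\lambda 1_B\equiv0$ through irreducibility, handle correctly the adjoint of an $\alpha$-sub-invariant measure relative to the shifted resolvent $\mathcal{U}_\alpha$ (and transfer irreducibility from $\mathcal{U}$ to $\mathcal{U}_\alpha$ and to its adjoint), and --- depending on the exact form of the definition of reference measure in use --- also produce the reverse inclusion via $\mathcal{U}^\ast$. Once these are in place, the implications ii) $\Rightarrow$ iii) $\Rightarrow$ i) are comparatively short, the only delicate ingredient there being the sub-invariance of $m|_A$ for an absorbing $A$, which follows from $1_{E\setminus A}\in\mathcal{E}(\mathcal{U})$ by the truncation argument indicated above.
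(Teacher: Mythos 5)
Your argument follows the same overall route as the paper's: the cycle i) $\Rightarrow$ ii) $\Rightarrow$ iii) $\Rightarrow$ i), with i) $\Rightarrow$ ii) resting on upgrading the $\mu$-a.e.\ identity $U_\lambda 1_B=0$ to a pointwise one by applying irreducibility to the absorbing set $[U_\lambda 1_B=0]$, and iii) $\Rightarrow$ i) using the reference property of $m$ together with $\alpha U_\alpha 1\nearrow 1$ and the fine openness of $E\setminus A$ --- this is exactly what the paper does. The one genuinely different step is ii) $\Rightarrow$ iii): the paper applies ii) to the $\alpha$-sub-invariant measure $\delta_x\circ U_\alpha$ for a suitably chosen $x\in A$ and then uses the excessive $m$-version of $1_{E\setminus A}$ from Proposition \ref{prop 2.2}, whereas you apply ii) to the restriction $m|_A$, which you correctly check is ($0$-order) sub-invariant once $A$ is replaced by a version with $1_{E\setminus A}\in\mathcal{E}(\mathcal{U})$; both are short and valid, yours perhaps slightly more self-contained. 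Note also that the converse half you add inside i) $\Rightarrow$ ii) (deducing $\mu(B)=0$ from $U_\lambda 1_B\equiv 0$) is superfluous: the paper's definition of a reference measure only requires $\mu(f)=0\Rightarrow Uf=0$.

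The one point that needs repair is your justification of Fact (1). The fact itself --- that the zero set of the $\lambda$-excessive function $U_\lambda 1_B$ is the zero set of a $0$-order excessive function, hence absorbing --- is true for a right process, and it is precisely what the paper uses tacitly when it passes from ``$[U_\alpha 1_A=0]=E$ $\mu$-a.e.'' to $U_\alpha 1_A\equiv 0$ via irreducibility. But your construction does not prove it: for $u$ the excessive regularization of $R_0(U_\lambda 1_B)$ one only gets $u\geq U_\lambda 1_B$ (so $[u=0]\subset[U_\lambda 1_B=0]$), while the reverse inclusion, i.e.\ $u=0$ on $[U_\lambda 1_B=0]$, is exactly the absorption property you are trying to establish: a priori $R_0(U_\lambda 1_B)$ may be strictly positive on part of $[U_\lambda 1_B=0]$, and the fine openness of $[U_\lambda 1_B>0]$ does not by itself force the measures $kU_k(x,\cdot)$, for $x$ with $U_\lambda 1_B(x)=0$, to ignore that set. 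The clean argument is the standard one for right processes: $e^{-\lambda t}U_\lambda 1_B(X_t)$ is a right-continuous supermartingale, so the process started in $[U_\lambda 1_B=0]$ never leaves it; hence $1_{[U_\lambda 1_B>0]}$ is supermedian and, by fine openness of $[U_\lambda 1_B>0]$, excessive (alternatively, invoke the corresponding statement from \cite{BeBo04} that absorbing sets may equivalently be defined through $\beta$-excessive functions for any $\beta\geq 0$). With Fact (1) so justified, the rest of your proof goes through.
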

\begin{proof}
i) $\Rightarrow$ ii). Let $\mu$ be an $\alpha$-sub-invariant measure and $A \in \mathcal{B}$ an $\mu$-negligible set. 
Then $[U_\alpha 1_A = 0] = E$ $\mu$-a.e., and because $\mathcal{U}$ is irreducible we get that $U_\alpha 1_A = 0$.

ii) $\Rightarrow$ iii). Clearly, we only have to check that $\mathcal{U}$ is $m$-irreducible. 
If $A \in \mathcal{B}$ is $m$-absorbing such  that $m(A) > 0$ then there exists $x \in A$ such that $U_\alpha 1_{E \setminus A}(x) = 0$. 
Since the measure $\delta_x \circ U_\alpha$ is a reference measure (as an $\alpha$-sub-invariant measure) it follows that $U_\alpha 1_{E \setminus A} = 0$. 
But by Proposition \ref{prop 2.2} there exists an excessive $m$-version $v$ of $1_{E \setminus A}$. 
Consequently, $v= \sup\limits_{\alpha} \alpha U_\alpha v = \sup\limits_{\alpha} \alpha U_\alpha 1_{E \setminus A} = 0$, hence $m(E \setminus A) = 0$

iii) $\Rightarrow$ i). Let $v \in p\mathcal{B}$ be an excessive function and $A:=[v=0]$. 
In particular, we have that $A$ is $m$-absorbing. If $m(A)=0$ then $U_\alpha 1_A =0$, hence
$1_{E\setminus A} \geq \alpha U_\alpha 1_{E\setminus A}=\alpha U_\alpha 1 \nearrow 1$.
It follows that $A = \emptyset$.
Now assume that $m(E \setminus A)=0$ so that $U_\alpha 1_{E \setminus A} =0$. 
Because $E \setminus A$ is finely open, by \cite{BeBo04}, Proposition 1.3.2 we have that
$\liminf\limits_{\alpha \to \infty} \alpha U_\alpha 1_{E \setminus A} = 1$ on $E \setminus A$.
In conclusion, $E \setminus A= \emptyset$.
\end{proof}

Let $\mu$ be a $\sigma$-finite measure on $(E, \mathcal{B})$. 
As in \cite{BeBo97}, we say that the $\mu${\it -quasi-Lindel\"of property
holds} (for the {\it fine topology} on $E$, which is the coarsest topology on $E$ making continuous all $\alpha$-excessive functions) if: 
for any collection $\mathcal{G}$ of finely open Borel
subsets of $E$ there exists a countable subcollection $(G_k)_{k \in \mathbb{N}}$ such that the set
$\bigcup\limits_{G \in \mathcal{G}}G \setminus \bigcup\limits_{k \in \mathbb{N}} G_k$
is $\mu$-semipolar. 
If $\bigcup\limits_{k \in \mathbb{N}} G_k$ differs from $\bigcup\limits_{G \in \mathcal{G}}G$ by 
a semipolar set, then we say that the {\it quasi-Lindel\"of property} holds.

\begin{rem}  
It is known that the quasi-Lindel\"of property holds if and only if $\mathcal{U}$ posses a {\it reference} measure
(i.e. there exists a $\sigma$-finite measure $\lambda$ such that $Uf=0$ whenever $\lambda(f)=0$ for $f \in p\mathcal{B}$).
Also, the $m$-quasi-Lindel\"of property holds if and only if there exists a set $A \in \mathcal{B}$ such that $E \setminus A$
is $m$-inessential and the restriction of $m$ to $A$ is a reference measure for the restriction 
of $\mathcal{U}$
on $A$ (see \cite{BeBo97}, Section 3, and the references therein). 
We reiterate that $m$ is a sub-invariant measure, fixed at the beginning of this subsection.
\end{rem}

\begin{prop} \label{prop 2.1.17}  
The following assertions are equivalent.

i) The $m$-quasi-Lindel\"of property holds for $\mathcal{U}$ and $\mathcal{U}$ is $m$-irreducible.

\vspace{0.1cm}

ii) There exists a Borel set $A$ such that $E \setminus A$ is $m$-inessential and the restriction of $\mathcal{U}$ to $A$ is irreducible.
\end{prop}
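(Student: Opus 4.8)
The plan is to obtain both implications by chaining together three facts already recorded in the excerpt. The first is the characterization of the quasi-Lindelöf property recalled in the Remark immediately preceding the statement: the $m$-quasi-Lindelöf property holds for $\mathcal{U}$ if and only if there is a Borel set $A$ with $E\setminus A$ $m$-inessential such that the restriction of $m$ to $A$ is a reference measure for the restriction $\mathcal{U}'$ of $\mathcal{U}$ to $A$. The second is Remark~\ref{rem 2.1.15}(ii), by which $m$-irreducibility of $\mathcal{U}$ is equivalent to $m|_A$-irreducibility of $\mathcal{U}'$ whenever $E\setminus A$ is $m$-inessential. The third is Proposition~\ref{prop 2.1.16}, applied to the right process whose resolvent is $\mathcal{U}'$ on $(A,\mathcal{B}|_A)$ equipped with the sub-invariant measure $m|_A$: it gives that $\mathcal{U}'$ is irreducible if and only if $\mathcal{U}'$ is $m|_A$-irreducible and $m|_A$ is a reference measure for $\mathcal{U}'$.

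For the implication i)~$\Rightarrow$~ii), I would assume the $m$-quasi-Lindelöf property holds for $\mathcal{U}$ and that $\mathcal{U}$ is $m$-irreducible. Using the first fact, pick a Borel set $A$ with $E\setminus A$ $m$-inessential and $m|_A$ a reference measure for $\mathcal{U}'$. By the second fact, $\mathcal{U}'$ is $m|_A$-irreducible. Then the third fact yields that $\mathcal{U}'$, i.e.\ the restriction of $\mathcal{U}$ to $A$, is irreducible, which is precisely assertion ii).

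For ii)~$\Rightarrow$~i), starting from a Borel set $A$ with $E\setminus A$ $m$-inessential such that $\mathcal{U}'$ is irreducible, Proposition~\ref{prop 2.1.16} applied to $\mathcal{U}'$ gives simultaneously that $\mathcal{U}'$ is $m|_A$-irreducible and that $m|_A$ is a reference measure for $\mathcal{U}'$. The former, combined with Remark~\ref{rem 2.1.15}(ii), forces $\mathcal{U}$ to be $m$-irreducible; the latter, combined with the characterization of the quasi-Lindelöf property, forces the $m$-quasi-Lindelöf property to hold for $\mathcal{U}$. Together these are exactly assertion i).

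The only point demanding any care — and the place I expect the sole bit of friction — is verifying that Proposition~\ref{prop 2.1.16} may legitimately be invoked on $A$: one must recall, from the paragraph introducing the restriction (following \cite{BeRo11}), that $\mathcal{U}'$ really is the resolvent of a right process on $(A,\mathcal{B}|_A)$ and that $m|_A$ is a $\sigma$-finite sub-invariant measure for it, so that all the standing hypotheses of the subsection are met with $A$ in the role of the state space. Once this is noted, the argument is a purely formal composition of the quoted equivalences, requiring no further analysis.
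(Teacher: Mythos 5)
Your proof is correct and follows essentially the same route as the paper's: both directions rest on the characterization of the $m$-quasi-Lindel\"of property via a reference measure on a restriction (\cite{BeBo97}, Theorem 3.1, as recalled in the preceding Remark), combined with Remark \ref{rem 2.1.15} ii) and Proposition \ref{prop 2.1.16} applied to the restricted resolvent $\mathcal{U}'$ on $(A,\mathcal{B}|_A)$ with the sub-invariant measure $m|_A$. Your closing check that $\mathcal{U}'$ is indeed the resolvent of a right process with $m|_A$ sub-invariant is precisely the point the paper leaves implicit.
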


\begin{proof}
i) $\Rightarrow$ ii). If the $m$-quasi-Lindel\"of property holds for $\mathcal{U}$ then, by \cite{BeBo97}, Theorem 3.1, there exists a Borel set $A$ such that $E \setminus A$ is $m$-inessential and $m$ is a reference measure for $\mathcal{U}{'}$. 
But $\mathcal{U}{'}$ is $m$-irreducible so assertion ii) follows by Proposition \ref{prop 2.1.16}.

The implication ii) $\Rightarrow$ i) follows by Proposition \ref{prop 2.1.16} and \cite{BeBo97}, Theorem 3.1.
\end{proof}

The next result is a version of Lemma 2.1 from \cite{BeBoRo06a}.

\begin{lem} \label{lem 2.1.15}  
If $E_0 \in \mathcal{B}$ is finely closed and $m(E \setminus E_0) = 0$ then there exists a set $F \subset E_0$ such  that $E \setminus F$ is $m$-inessential.  
\end{lem}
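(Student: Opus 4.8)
The plan is to produce $F$ explicitly. Set $N:=E\setminus E_0$; since $E_0$ is finely closed, $N$ is finely open, and $m(N)=0$ by hypothesis. I would then take
\[
F:=[U1_N=0]
\]
and check that $F\subseteq E_0$, that $m(E\setminus F)=0$, and that $F$ is absorbing, so that $E\setminus F=E\setminus F$ (with complement $F$ absorbing and $m$-negligible) is $m$-inessential.

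First I would show $m(E\setminus F)=0$. Since $m$ is sub-invariant, $\int_E\alpha U_\alpha 1_N\,dm\leq\int_E 1_N\,dm=m(N)=0$ for every $\alpha>0$, hence $\alpha U_\alpha 1_N=0$ $m$-a.e. for each $\alpha$; taking the supremum over $\alpha\in\{1/n:n\geq 1\}$ (recall $U=\sup_n U_{1/n}$) gives $U1_N=0$ $m$-a.e., i.e. $m(E\setminus F)=m([U1_N>0])=0$. Next, $F$ is absorbing: the function $v:=U1_N$ belongs to $\mathcal{E}(\mathcal{U})$, because from the resolvent equation $v=U_\alpha 1_N+\alpha U_\alpha v$ one gets $\alpha U_\alpha v=v-U_\alpha 1_N\leq v$, and since $\alpha\mapsto U_\alpha 1_N$ is decreasing with $U_\alpha 1_N\leq 1/\alpha\to 0$, one has $\alpha U_\alpha v\nearrow v$ as $\alpha\to\infty$. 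As $F=[v=0]$ with $v\in\mathcal{E}(\mathcal{U})\subset p\mathcal{B}$, the set $F$ is absorbing by the definition of an absorbing set.

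It remains to see $F\subseteq E_0$, i.e. $U1_N>0$ on $N$. Here is where fine-closedness of $E_0$ enters: since $N$ is finely open, \cite{BeBo04}, Proposition 1.3.2, gives $\liminf_{\alpha\to\infty}\alpha U_\alpha 1_N=1$ on $N$, so $U1_N\geq U_\alpha 1_N>0$ on $N$ for $\alpha$ large; hence $[U1_N=0]\cap N=\emptyset$, that is $F\subseteq E\setminus N=E_0$. Combining the three points yields $F\subseteq E_0$, $m(E\setminus F)=0$, and $F$ absorbing, so $E\setminus F$ is $m$-inessential. The only slightly delicate ingredients are the $\mathcal{U}$-excessiveness of $U1_N$ and its strict positivity on the finely open set $N$; the latter is the genuine use of the hypothesis and is the same mechanism already exploited in the proof of Proposition \ref{prop 2.1.16}, so I do not expect a real obstacle.
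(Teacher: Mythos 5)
Your argument is correct, but it takes a genuinely different route from the paper. You exhibit $F$ in one step as the zero set $[U1_N=0]$ of a single potential: $m(E\setminus F)=0$ follows from sub-invariance exactly as you say, $F$ is absorbing because $U1_N$ is excessive (your verification via $U1_N=U_\alpha 1_N+\alpha U_\alpha U1_N$ and $U_\alpha 1_N\le 1/\alpha$ is fine, since $1_N$ is bounded), and $F\subset E_0$ comes from the fine openness of $N$ through \cite{BeBo04}, Proposition 1.3.2 --- the same fact the paper itself invokes in the proof of Proposition \ref{prop 2.1.16}, so its use here is legitimate. The paper proceeds differently: it iterates $E_{n+1}=E_n\cap[U(1_{E\setminus E_n})=0]$ and takes $F=\bigcap_n E_n$, so that $U1_{E\setminus F}=0$ on $F$ and $F$ is finely closed (an intersection of finely closed sets); then the supermedian, finely lower semicontinuous function $1_{E\setminus F}$ is upgraded to an excessive function via \cite{BeBo04}, Corollary 1.3.4. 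Your construction is shorter and makes the absorbing set completely explicit, using the definition of absorbing sets via zero sets of excessive functions rather than excessivity of $1_{E\setminus F}$; the paper's iteration buys a slightly stronger structural output (a finely closed $F$ with $1_{E\setminus F}$ itself excessive) and avoids the pointwise statement $\liminf_{\alpha\to\infty}\alpha U_\alpha 1_N=1$ on $N$, replacing it by the regularization result Corollary 1.3.4. Both are correct proofs of the lemma as stated.
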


\begin{proof}
Let $(E_n)_{n \geq 1} \subset \mathcal{B}$ be the sequence defined inductively by $E_{n+1} = E_n \cap [U(1_{E \setminus E_n}) = 0]$ if
$n \geq 0$. If $F := \bigcap\limits_n E_n$ then $\mathcal{B} \ni F \subset E_0$, $m(E \setminus F) = 0$, and $U1_{E \setminus F}=0$ on $F$.
Moreover, $F$ is finely closed, as an intersection of finely closed sets. 
Therefore, the function $1_{E \setminus F}$ is supermedian and finely lower semicontinuous. 
By \cite{BeBo04}, Corollary 1.3.4 we get that $1_{E \setminus F}$ is excessive. 
Clearly, $E \setminus F$ is $m$-inessential.
\end{proof}

\begin{prop} \label{prop 2.1.18} 
The following assertions are equivalent.

i) The $m$-quasi-Lindel\"of property holds for $\mathcal{U}$ and $\mathcal{U}$ is $m$-recurrent and $m$-irreducible.

\vspace{0.1cm}

ii) There exists a Borel set $A$ such that $E \setminus A$ is $m$-inessential and the restriction of $\mathcal{U}$ to $A$ is recurrent.
\end{prop}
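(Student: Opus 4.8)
The plan is to obtain both implications from the probabilistic tools already available --- Proposition~\ref{prop 2.1.16}, Proposition~\ref{prop 2.1.17}, Proposition~\ref{prop 2.9}, Lemma~\ref{lem 2.1.15}, Remark~\ref{rem 2.1.15} --- together with the classical equivalence (see \cite{Ge80}) that a right process is recurrent precisely when all its excessive functions are constant.

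For the implication ii)~$\Rightarrow$~i), set $\mathcal{U}':=\mathcal{U}|_A$, so that $E\setminus A$ is $m$-inessential and $\mathcal{U}'$ is recurrent. Since every $\mathcal{U}'$-excessive function is constant, every $\mathcal{U}'$-absorbing set is trivial, hence $\mathcal{U}'$ is irreducible; by Proposition~\ref{prop 2.1.16} this gives that $\mathcal{U}'$ is $m$-irreducible and that $m|_A$ is a reference measure for $\mathcal{U}'$, so the $m$-quasi-Lindel\"of property holds for $\mathcal{U}$ by the characterization recalled before Proposition~\ref{prop 2.1.17} (see \cite{BeBo97}, Theorem~3.1). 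Approximating an arbitrary $f\in p\mathcal{B}$ from below by simple functions and using that $U'1_B$ is identically $0$ or identically $+\infty$ for every $B$, one sees that $U'f\in\{0,+\infty\}$, so $\mathcal{U}'$ is $m$-recurrent as well; by Remark~\ref{rem 2.1.15}~ii) it follows that $\mathcal{U}$ is $m$-recurrent and $m$-irreducible, which is i).

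For the converse i)~$\Rightarrow$~ii), the implication i)~$\Rightarrow$~ii) of Proposition~\ref{prop 2.1.17} supplies a Borel set $A$ with $E\setminus A$ $m$-inessential and $\mathcal{U}|_A$ irreducible; by Remark~\ref{rem 2.1.15}~ii) $\mathcal{U}|_A$ is moreover $m$-recurrent, and by Proposition~\ref{prop 2.1.16} $m|_A$ is a reference measure for it. Since the composition of two $m$-inessential reductions is again one, it suffices to prove: \emph{if $\mathcal{U}$ is irreducible, $m$-recurrent and $m$ is a reference measure, then there is $F\in\mathcal{B}$ with $E\setminus F$ $m$-inessential such that $\mathcal{U}|_F$ is recurrent}. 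To this end let $h:=\sup_k kU_k1$ be the $\mathcal{U}$-excessive regularization of the supermedian function $1$; then $h\le 1$ and $h\in\mathcal{E}(\mathcal{U})$, and since $\alpha U_\alpha1=1$ $m$-a.e. by Proposition~\ref{prop 2.9}~iv) (applicable as $\mathcal{U}$ is $m$-irreducible and $m$-recurrent) we get $h=1$ $m$-a.e. The set $[h=1]=[h\ge1]$, being the preimage of a closed set under the finely continuous function $h$, is finely closed and of full $m$-measure, so Lemma~\ref{lem 2.1.15} yields $F\subset[h=1]$ with $E\setminus F$ $m$-inessential. On $F$ one has $U^F_\alpha1=(U_\alpha1)|_F$, hence $\sup_k kU^F_k1=h|_F=1$, so every constant function is $\mathcal{U}|_F$-excessive; moreover $\mathcal{U}|_F$ is $m$-recurrent and $m$-irreducible by Remark~\ref{rem 2.1.15}~ii), and $m|_F$ is a reference measure for $\mathcal{U}|_F$ because $m$ is one for $\mathcal{U}$. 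Now take any $w\in\mathcal{E}(\mathcal{U}|_F)$; by Proposition~\ref{prop 2.9}~iv) there is a constant $c$ with $w=c$ $m$-a.e. As $m|_F$ is a reference measure, each kernel $U^F_\alpha(x,\cdot)$ is absolutely continuous with respect to $m|_F$, so $\alpha U^F_\alpha w=\alpha U^F_\alpha c$ everywhere; letting $\alpha\to\infty$, and using that both $w$ and $c$ are $\mathcal{U}|_F$-excessive, gives $w=c$ everywhere. Hence all $\mathcal{U}|_F$-excessive functions are constant, i.e. $\mathcal{U}|_F$ is recurrent, which is ii).

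The main obstacle is exactly this last point: recurrence of a right process is a pointwise notion, whereas Proposition~\ref{prop 2.9} only delivers that excessive functions are constant $m$-a.e. Bridging the gap is what forces the reference measure into the picture --- first via Proposition~\ref{prop 2.1.17}, to locate an absorbing set carrying a reference measure, and then via Lemma~\ref{lem 2.1.15} together with the regularization $h$ of $1$, to ensure that on the final set $F$ the constants are genuinely $\mathcal{U}|_F$-excessive, so that absolute continuity of the kernels $U^F_\alpha(x,\cdot)$ with respect to $m|_F$ upgrades the $m$-a.e. identity of excessive functions to an everywhere identity. The remaining verifications --- that restricting a reference measure to an absorbing subset again gives a reference measure, and that $m$-inessential reductions compose --- are routine and follow directly from the definitions.
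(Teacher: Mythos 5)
Your ii) $\Rightarrow$ i) is correct and essentially the paper's argument (the paper transfers ``all excessive functions constant'' through Remark \ref{rem 2.1.15} and Proposition \ref{prop 2.9}, while you get $m$-recurrence directly from the dichotomy $U'1_B\in\{0,\infty\}$ and irreducibility from triviality of absorbing sets; both are fine, and your route even avoids having to check the condition $\alpha U_\alpha 1=1$ $m$-a.e.\ in Proposition \ref{prop 2.9} iv)). For i) $\Rightarrow$ ii) you genuinely diverge from the paper. The paper, after Proposition \ref{prop 2.1.17} produces $A$ with $\mathcal{U}':=\mathcal{U}|_A$ irreducible and $m$-recurrent, fixes $B$, gets $U'1_B=0$ or $=\infty$ $m$-a.e.\ from Proposition \ref{prop 2.9}, notes that the corresponding level set is finely closed of full measure, applies Lemma \ref{lem 2.1.15} to find a nonempty absorbing subset of it, and concludes by irreducibility that the set is all of $A$; recurrence then holds by definition. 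You instead upgrade ``excessive $\Rightarrow$ constant $m$-a.e.'' to an everywhere statement via absolute continuity of the kernels with respect to the reference measure, and then invoke the equivalence ``recurrent $\Leftrightarrow$ all excessive functions constant''.

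That last invocation is the one weak point: the direction you need (constancy of all excessive functions implies recurrence) is the nontrivial one, and as stated without a conservativeness hypothesis it is false --- for the process on a one-point space killed at an exponential time every excessive function is constant, yet $U1_{\{x\}}=1\notin\{0,\infty\}$. The paper only quotes this characterization in a remark and its own proof of i) $\Rightarrow$ ii) deliberately does not use that direction. Fortunately the gap closes with tools you already deploy: either observe that $\alpha U^F_\alpha 1=1$ $m$-a.e.\ (Proposition \ref{prop 2.9}) upgrades to an identity on $F$ by the very same reference-measure argument, so $\mathcal{U}|_F$ is conservative and the Getoor equivalence applies; or argue directly that for $B\in\mathcal{B}|_F$ the excessive function $U^F1_B$ is a constant $c$, and, writing $B$ as an increasing union of sets of finite $m$-measure, $m$-recurrence of $\mathcal{U}|_F$ forces $c\in\{0,\infty\}$. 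Two minor remarks: your detour through $h=\sup_k kU_k1$ is harmless but unnecessary, since for the resolvent of a right process one has $h\equiv 1$, so constants are automatically $\mathcal{U}|_F$-excessive; and the ``routine'' facts you defer (composition of $m$-inessential reductions, restriction of a reference measure) do indeed check out, using that $U_\alpha 1_{E\setminus F}=0$ on $F$ when $1_{E\setminus F}$ is excessive.
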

\begin{proof}
i) $\Rightarrow$ ii). By Proposition \ref{prop 2.1.17} and Remark \ref{rem 2.1.15} there exists a Borel set $A$ such that $E \setminus A$ is $m$-inessential and the restriction $\mathcal{U}{'}$ is irreducible and $m$-recurrent.
Therefore, if $B \in \mathcal{B}|_A$ then $U{'} 1_B = 0$ or $U{'} 1_B = \infty$, $m$-a.e.
Let $E_0:=[U{'} 1_B = 0]$ such that $m(A \setminus E_0)=0$.
From Lemma \ref {lem 2.1.15} there exists a non-empty absorbent set $F \subset E_0$. Consequently, $E_0 = A$. 
The other case is similar.   

ii) $\Rightarrow$ i). Since $m(E \setminus A) = 0$, by Remark \ref{rem 2.1.15} it follows that all $\mathcal{U}$-excessive functions are constant $m$-a.e., and by Proposition \ref{prop 2.9} we obtain that $\mathcal{U}$ is $m$-recurrent and $m$-irreducible. 
The $m$-quasi-Lindel\"of property follows by Proposition \ref{prop 2.1.17}.
\end{proof}

\vspace{0.2cm}

\noindent
{\bf Irreducibility and invariance.} 
As in \cite{AlKoRo97a} a real-valued function $v \in \bigcup\limits_{1 \leq p \leq \infty} L^p(E, m)$ is called {\it $\mathcal{U}$-invariant} (with respect to $m$) provided that for all $\alpha > 0$ and $f \in bp\mathcal{B}$ we have 
\[
U_{\alpha}(vf) = v U_{\alpha} f \quad m\mbox{-a.e.} 
\]

A set $A \in \mathcal{B}$ is called {\it $\mathcal{U}$-invariant} if the function $1_A$ is $\mathcal{U}$-invariant. 
It is easy to check that the collection of all $\mathcal{U}$-invariant sets is a $\sigma$-algebra.

\begin{rem} \label{rem 2.14} 
Let $v$ be a $\mathcal{U}$-invariant function.

i) If $u$ is a $\mathcal{B}$-measurable real-valued function and $u = v$ $m$-a.e. then $u$ is also $\mathcal{U}$-invariant.

ii) If $v \geq 0$ then there exists a $\mathcal{U}$-excessive function $u$ such that $u = v$ $m$-a.e. 
If in addition $\alpha U_{\alpha} 1 = 1$ $m$-a.e. then $\alpha U_{\alpha} v = v$ $m$-a.e. 
Indeed, the assertion follows since $\alpha U_{\alpha} v = v\alpha U_{\alpha} 1 \leq v$ $m$-a.e.
\end{rem}

For every $p \in [1, \infty]$ let $\mathcal{A}_p$ be the set of all $\mathcal{U}$-invariant functions from $L^p(E, m)$.

\begin{prop} \label{prop 2.15} 
The set $\mathcal{A}_p$, $1 \leq p \leq \infty$ is a vector lattice with respect to the pointwise order relation.
\end{prop}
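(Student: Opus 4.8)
The plan is to verify the three closure properties defining a vector lattice: closure under addition, closure under scalar multiplication, and closure under the lattice operations $v \mapsto v^+$ (equivalently, stability under $\max$ and $\min$). Linearity is immediate from the defining identity $U_\alpha(vf) = vU_\alpha f$ $m$-a.e.: if $v_1, v_2 \in \mathcal{A}_p$ and $\lambda \in \mathbb{R}$, then for every $f \in bp\mathcal{B}$ and $\alpha > 0$ we have $U_\alpha((v_1 + \lambda v_2)f) = U_\alpha(v_1 f) + \lambda U_\alpha(v_2 f) = v_1 U_\alpha f + \lambda v_2 U_\alpha f = (v_1 + \lambda v_2)U_\alpha f$ $m$-a.e., and $v_1 + \lambda v_2 \in L^p(E,m)$; so $\mathcal{A}_p$ is a vector subspace. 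Since in a vector space closure under $v \mapsto v^+$ already implies closure under $\vee$ and $\wedge$ (via $v_1 \vee v_2 = v_2 + (v_1 - v_2)^+$ and $v_1 \wedge v_2 = v_1 + v_2 - v_1 \vee v_2$), it suffices to show that $v \in \mathcal{A}_p$ implies $v^+ \in \mathcal{A}_p$.

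For the lattice part I would first reduce to the case of a nonnegative invariant function. Writing $v = v^+ - v^-$ with $v^+, v^- \geq 0$ and $v^+ \wedge v^- = 0$, it is enough to show each of $v^+, v^-$ is $\mathcal{U}$-invariant; since $v^- = (-v)^+$ and $-v \in \mathcal{A}_p$, showing $w^+ \in \mathcal{A}_p$ for all $w \in \mathcal{A}_p$ handles everything. So fix $v \in \mathcal{A}_p$ and consider $v^+$. The key tool is Remark \ref{rem 2.14}(ii): a nonnegative invariant function agrees $m$-a.e.\ with a $\mathcal{U}$-excessive function — but here $v^+$ need not be known invariant yet, so instead I work directly with kernels. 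Fix $f \in bp\mathcal{B}$, $\alpha > 0$, and apply invariance of $v$ to $f$: $U_\alpha(vf) = vU_\alpha f$ $m$-a.e. The plan is to split $f$ according to the sign of $v$. Let $A := [v \geq 0]$. On the set where we want to evaluate, I would test against arbitrary nonnegative bounded measurable $g$ and use the weak duality \eqref{eq 2.1}, or more simply use that $v$ being invariant means $1_A$-type truncations interact well with $U_\alpha$; concretely, apply the invariance identity with $f$ replaced by $f 1_A$ and note $v 1_A = v^+$. This gives $U_\alpha(v^+ f) = U_\alpha(v 1_A f) = v U_\alpha(1_A f)$ $m$-a.e.

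The remaining point — and the main obstacle — is to identify $v U_\alpha(1_A f)$ with $v^+ U_\alpha f$ $m$-a.e., i.e.\ to show $U_\alpha(1_A f) = U_\alpha f$ $m$-a.e.\ on $[v \neq 0]$, or equivalently that $A$ (up to $m$-null sets, intersected with $[v\ne 0]$) is $\mathcal{U}$-absorbing in the relevant sense. This should follow because the $\sigma$-algebra of $\mathcal{U}$-invariant sets is exactly what controls such truncations: one shows $[v > 0]$ and $[v \geq 0]$ are $\mathcal{U}$-invariant sets whenever $v$ is a $\mathcal{U}$-invariant function. For that I would take a sequence $\varphi_n \colon \mathbb{R} \to [0,1]$ of bounded Lipschitz functions increasing pointwise to $1_{(0,\infty)}$, observe that $\varphi_n \circ v$ is invariant (invariance of $v$ passes to bounded continuous — indeed to affine, then by approximation to Lipschitz — functions of $v$, using that $U_\alpha$ is a positive contraction and dominated convergence), pass to the limit $\varphi_n \circ v \nearrow 1_{[v>0]}$ to conclude $1_{[v>0]}$ is invariant, and similarly for $1_{[v\ge 0]}$. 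Granting this, $1_{[v>0]}f$ satisfies $U_\alpha(1_{[v>0]}f) = 1_{[v>0]}U_\alpha f$ $m$-a.e., hence $v U_\alpha(1_A f) = v U_\alpha(1_{[v>0]}f) = v 1_{[v>0]}U_\alpha f = v^+ U_\alpha f$ $m$-a.e.\ (using $v 1_A = v 1_{[v>0]} = v^+$). Combining, $U_\alpha(v^+ f) = v^+ U_\alpha f$ $m$-a.e.\ for all $\alpha > 0$ and $f \in bp\mathcal{B}$; moreover $v^+ \in L^p(E,m)$ since $|v^+| \leq |v|$. Thus $v^+ \in \mathcal{A}_p$, and together with the vector space structure this proves $\mathcal{A}_p$ is a vector lattice under the pointwise order.
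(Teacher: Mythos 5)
Your setup (the vector-space part, the reduction to showing $w^{+}\in\mathcal{A}_p$, and the identity $U_{\alpha}(v^{+}f)=U_{\alpha}(v\,1_{[v>0]}f)=v\,U_{\alpha}(1_{[v>0]}f)$ obtained by testing the invariance of $v$ against $1_{[v>0]}f\in bp\mathcal{B}$) is fine and coincides with the first line of the paper's proof. The genuine gap is in the step you yourself flag as the main obstacle: you need $1_{[v>0]}$ (and $1_{[v\ge 0]}$) to be $\mathcal{U}$-invariant, and your justification --- ``invariance of $v$ passes to affine, then by approximation to Lipschitz functions of $v$'' --- does not work. Affine images $av+b$ of $v$ are invariant by linearity, but the class of affine functions is closed under pointwise and uniform limits, so no approximation scheme gets you from affine to a genuinely nonlinear Lipschitz $\varphi_n$. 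The natural intermediate class, piecewise-linear functions, is generated from affine ones precisely by $\max$ and $\min$, i.e.\ by the lattice stability you are trying to prove, so that route is circular. A polynomial (Weierstrass) approximation would require $v$ bounded (to make $\varphi(v)$-type test functions and uniform convergence on the range of $v$ available), and reducing to bounded $v$ by replacing $v$ with $\inf(v,c)$ is again the lattice operation in question (the paper only records invariance of truncations as a \emph{consequence}, in Remark \ref{rem 2.16}). There is also a smaller slip: you first take $A=[v\ge 0]$ and later silently replace $U_{\alpha}(1_A f)$ by $U_{\alpha}(1_{[v>0]}f)$, which needs an extra argument about $[v=0]$; working with $[v>0]$ throughout avoids this.

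The statement you want ($[v>0]$ invariant for $v$ invariant) is true, but in the paper it comes \emph{after} this proposition (via Corollary \ref{coro 2.17} and the proof of Theorem \ref{thm 2.19}), so it cannot be invoked here. The paper closes the argument much more cheaply, with a two-sided inequality: from $U_{\alpha}(u^{+}f)=uU_{\alpha}(1_{[u>0]}f)$ one only deduces the inequality $U_{\alpha}(u^{+}f)\le u^{+}U_{\alpha}f$ (since $u\le u^{+}$, $u^{+}\ge 0$ and $U_{\alpha}(1_{[u>0]}f)\le U_{\alpha}f$), similarly $U_{\alpha}(u^{-}f)\le u^{-}U_{\alpha}f$, hence $U_{\alpha}(|u|f)\le |u|U_{\alpha}f$; the reverse inequality follows from $\pm uU_{\alpha}f=U_{\alpha}(\pm uf)\le U_{\alpha}(|u|f)$. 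Thus $|u|\in\mathcal{A}_p$, and $u^{+}=\tfrac12(u+|u|)$ is invariant by the vector-space structure. If you replace your invariant-set argument by this inequality trick, your proof is complete; as written, the key step is unproved and the proposed way of proving it is circular.
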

\begin{proof}
It is clear that $\mathcal{A}_p$ is a vector space. If $u \in \mathcal{A}_p$, $\alpha > 0$ and $f \in bp\mathcal{B}$ then we have $m$-a.e.
\[
U_{\alpha}(u^{+}f) = U_{\alpha}(1_{[u>0]}u f) = uU_{\alpha}(1_{[u > 0]}f) \leq u^{+}U_{\alpha} f.
\]
Consequently we have also $U_{\alpha}(u^{-} f) \leq u^{-}U_{\alpha} f$ and therefore
\[
U_{\alpha}(|u|f) \leq |u|U_{\alpha} f \quad m\mbox{-a.e.}
\]
On the other hand we have $m$-a.e.
\[
\pm u U_{\alpha} f = U_{\alpha} (\pm u f) \leq U_{\alpha}(|u|f),
\]
and thus $|u|U_{\alpha}f \leq U_{\alpha}(|u|f)$, hence $|u|\in \mathcal{A}_p$.
\end{proof}

\begin{prop} \label{prop 2.16} 
The following assertions are equivalent for a real-valued function $u \in L^p(E, m)$.

i) $u$ is $\mathcal{U}$-invariant.

\vspace{0.1cm}

ii) $u$ is $\mathcal{U}^{\ast}$-invariant.

\vspace{0.1cm}

iii) For all $f, g\in bp\mathcal{B} \cap L^{p'}(E, m)$ and $\alpha > 0$ we have
\[
\int f u U_{\alpha}^{\ast} g dm = \int g u U_{\alpha} f dm.
\]
\end{prop}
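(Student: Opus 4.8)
The plan is to prove the two equivalences i) $\Leftrightarrow$ iii) and ii) $\Leftrightarrow$ iii) separately (which together give also i) $\Leftrightarrow$ ii)), all of them resting on the weak duality relation (\ref{eq 2.1}). Two elementary preliminary facts will be used repeatedly. First, taking one of the functions in (\ref{eq 2.1}) to be $1$ and using $\alpha U_\alpha 1 \leq 1$ gives $\int \alpha U_\alpha^\ast f\, dm = \int f\,\alpha U_\alpha 1\, dm \leq \int f\, dm$, so $m$ is sub-invariant for $\mathcal{U}^\ast$ as well; hence, exactly as for $\mathcal{U}$, each $U_\alpha^\ast$ is a bounded operator on every $L^q(E,m)$, $1\leq q\leq\infty$. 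Second, splitting a function into its positive and negative parts and invoking linearity together with the finiteness supplied by Hölder's inequality and the $L^p$-boundedness of $U_\alpha$, the identity (\ref{eq 2.1}) extends to
\[
\int k\, U_\alpha h\, dm = \int h\, U_\alpha^\ast k\, dm \qquad \text{for all } h \in L^p(E,m),\ k \in bp\mathcal{B}\cap L^{p'}(E,m),
\]
and likewise with $\mathcal{U}$ and $\mathcal{U}^\ast$ interchanged.

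Granting these, i) $\Rightarrow$ iii) and ii) $\Rightarrow$ iii) are direct computations. If $u$ is $\mathcal{U}$-invariant then, for $\alpha>0$ and $f,g\in bp\mathcal{B}\cap L^{p'}(E,m)$,
\[
\int g\, u\, U_\alpha f\, dm = \int g\, U_\alpha(uf)\, dm = \int (uf)\, U_\alpha^\ast g\, dm = \int f\, u\, U_\alpha^\ast g\, dm,
\]
which is iii); the first equality is invariance and the second is the extended duality applied to $h=uf\in L^p(E,m)$. If $u$ is $\mathcal{U}^\ast$-invariant, the symmetric chain $\int f\, u\, U_\alpha^\ast g\, dm = \int f\, U_\alpha^\ast(ug)\, dm = \int (ug)\, U_\alpha f\, dm = \int g\, u\, U_\alpha f\, dm$ again yields iii).

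For iii) $\Rightarrow$ i) I would rewrite the right-hand side of iii) by duality, $\int f\, u\, U_\alpha^\ast g\, dm = \int (uf)\, U_\alpha^\ast g\, dm = \int g\, U_\alpha(uf)\, dm$, so that iii) becomes $\int g\,[\,u\,U_\alpha f - U_\alpha(uf)\,]\, dm = 0$ for all $g\in bp\mathcal{B}\cap L^{p'}(E,m)$. The bracketed function lies in $L^p(E,m)$ (since $U_\alpha f$ is bounded and $u\in L^p(E,m)$, while $uf\in L^p(E,m)$ and $U_\alpha$ is bounded on $L^p$), so testing with $g=1_A$, $m(A)<\infty$, and using the $\sigma$-finiteness of $m$ forces $u\,U_\alpha f = U_\alpha(uf)$ $m$-a.e., for every $f\in bp\mathcal{B}\cap L^{p'}(E,m)$. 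To reach the full definition of $\mathcal{U}$-invariance, which ranges over all $f\in bp\mathcal{B}$, I would truncate: fix $E_n\uparrow E$ with $m(E_n)<\infty$, apply the identity to $f_n := f1_{E_n}\in bp\mathcal{B}\cap L^{p'}(E,m)$, and pass to the limit, using that $U_\alpha f_n \nearrow U_\alpha f \leq \|f\|_\infty/\alpha$ pointwise and that $U_\alpha$ commutes with increasing pointwise limits (applied to the positive and negative parts of $uf$), so $u\,U_\alpha f_n \to u\,U_\alpha f$ and $U_\alpha(uf_n)\to U_\alpha(uf)$ $m$-a.e. The implication iii) $\Rightarrow$ ii) is completely symmetric: rewrite instead the left-hand side of iii) as $\int g\, u\, U_\alpha f\, dm = \int (ug)\, U_\alpha f\, dm = \int f\, U_\alpha^\ast(ug)\, dm$, concluding $U_\alpha^\ast(ug) = u\,U_\alpha^\ast g$ $m$-a.e. first for $g\in bp\mathcal{B}\cap L^{p'}(E,m)$ and then, by the same truncation, for all $g\in bp\mathcal{B}$.

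I expect the genuinely delicate points to be purely measure-theoretic rather than conceptual: the signed, $L^p$–$L^{p'}$ extension of (\ref{eq 2.1}) together with the boundedness of $U_\alpha^\ast$ on $L^p(E,m)$, and the final truncation step. The latter is where the $\sigma$-finiteness of $m$ is indispensable, and it is needed precisely because the definitions of $\mathcal{U}$- and $\mathcal{U}^\ast$-invariance quantify over all bounded positive measurable functions, not merely the $m$-integrable ones. The algebraic heart of the argument — moving the factor $u$ across $U_\alpha$ through the duality — is immediate once this bookkeeping is in place.
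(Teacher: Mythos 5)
Your proof is correct and follows essentially the same route as the paper: everything rests on the weak duality relation (\ref{eq 2.1}) together with the observation that $\mathcal{U}$-invariance is equivalent to the integral identity tested against $f,g\in bp\mathcal{B}\cap L^{p'}(E,m)$, whose symmetry in $\mathcal{U}$ and $\mathcal{U}^{\ast}$ yields i) $\Leftrightarrow$ ii). The only difference is that you spell out the measure-theoretic bookkeeping (the signed $L^p$--$L^{p'}$ extension of (\ref{eq 2.1}), testing with indicators, and the truncation from $f\in bp\mathcal{B}\cap L^{p'}$ to all $f\in bp\mathcal{B}$) that the paper's two-line proof leaves implicit.
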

\begin{proof}
Notice that $u \in \mathcal{A}_p$ if and only if for all $f,g \in bp\mathcal{B} \cap L^{p'}(E, m)$ we have
\[
\int g U_{\alpha}(uf) dm = \int gu U_{\alpha} f dm.
\] 
The equivalence i) $\Leftrightarrow$ iii) follows now since 
\[
\int g U_{\alpha}(uf) dm = \int f u U_{\alpha}^{\ast} g dm.
\]

We have also i) $\Leftrightarrow$ ii) since property iii) is the same for $\mathcal{U}$ and $\mathcal{U}^{\ast}$.
\end{proof}

\begin{coro} \label{coro 2.17} 
If $A \in \mathcal{B}$ then the following assertions are equivalent.

i) The function $1_A$ is $\mathcal{U}$-invariant.

\vspace{0.1cm}

ii) The sets $A$ and $E \setminus A$ are both of them $\mathcal{U}$-absorbing.

\vspace{0.1cm}

iii) There exists a function $s \in \mathcal{E}(\mathcal{U}) \cap \mathcal{E}(\mathcal{U^\ast}$) such that $A = [s=0]$  $m$-a.e.

\vspace{0.1cm}

iv) There exists an $\mathcal{U}$-invariant function $s$ such that $A = [s=0]$  $m$-a.e.
\end{coro}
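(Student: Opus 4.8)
The plan is to prove i)~$\Leftrightarrow$~ii) directly from the definitions, and then to close the chain of equivalences through iii) and iv), the genuinely delicate point being ii)~$\Rightarrow$~iii).

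\emph{i)~$\Rightarrow$~ii).} Testing the $\mathcal{U}$-invariance of $1_A$ against $f=1_{E\setminus A}\in bp\mathcal{B}$ gives $1_A\,U_\alpha(1_{E\setminus A})=U_\alpha(1_A 1_{E\setminus A})=0$ $m$-a.e.\ for every $\alpha>0$; taking the supremum over $\alpha=\tfrac1n$ yields $U(1_{E\setminus A})=0$ $m$-a.e.\ on $A$, i.e.\ $A$ is $\mathcal{U}$-absorbing. Since $U_\alpha f<\infty$ whenever $f\in bp\mathcal{B}$, the identity $U_\alpha(1_{E\setminus A}f)=U_\alpha f-U_\alpha(1_A f)=(1-1_A)U_\alpha f$ $m$-a.e.\ shows $1_{E\setminus A}$ is $\mathcal{U}$-invariant too, so $E\setminus A$ is $\mathcal{U}$-absorbing by the same argument. \emph{ii)~$\Rightarrow$~i).} If $A$ and $E\setminus A$ are both $\mathcal{U}$-absorbing then, for $f\in bp\mathcal{B}$, $U_\alpha(1_A f)\le\|f\|_\infty U(1_A)=0$ $m$-a.e.\ on $E\setminus A$ and $U_\alpha(1_{E\setminus A}f)\le\|f\|_\infty U(1_{E\setminus A})=0$ $m$-a.e.\ on $A$; combining with $U_\alpha(1_A f)=U_\alpha f-U_\alpha(1_{E\setminus A}f)$ gives $U_\alpha(1_A f)=1_A U_\alpha f$ $m$-a.e., i.e.\ $1_A$ is $\mathcal{U}$-invariant.

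For the remaining, easier, links: \emph{iii)~$\Rightarrow$~ii)} follows from Proposition~\ref{prop2.4}: applied to $\mathcal{U}$ and to $\mathcal{U}^\ast$, the hypothesis $A=[s=0]$ $m$-a.e.\ with $s\in\mathcal{E}(\mathcal{U})\cap\mathcal{E}(\mathcal{U}^\ast)$ shows $A$ is both $\mathcal{U}$- and $\mathcal{U}^\ast$-absorbing, and the equivalence i)~$\Leftrightarrow$~ii) of Proposition~\ref{prop2.4}, with $\mathcal{U}$ and $\mathcal{U}^\ast$ interchanged, upgrades the second statement to ``$E\setminus A$ is $\mathcal{U}$-absorbing''. \emph{i)~$\Rightarrow$~iv)} is immediate: take $s:=1_{E\setminus A}$, which is $\mathcal{U}$-invariant by the computation above and satisfies $A=[s=0]$. \emph{iv)~$\Rightarrow$~i):} by Propositions~\ref{prop 2.15} and~\ref{prop 2.16}, $|s|$ is nonnegative and both $\mathcal{U}$- and $\mathcal{U}^\ast$-invariant, with $A=[|s|=0]$ $m$-a.e.; the argument of Remark~\ref{rem 2.14}~ii), applied to $\mathcal{U}$ and to $\mathcal{U}^\ast$, then gives $u\in\mathcal{E}(\mathcal{U})$ and $u^\ast\in\mathcal{E}(\mathcal{U}^\ast)$ with $u=|s|=u^\ast$ $m$-a.e., so by Proposition~\ref{prop2.4} and its dual $A$ is both $\mathcal{U}$- and $\mathcal{U}^\ast$-absorbing; as in iii)~$\Rightarrow$~ii) this makes $A$ and $E\setminus A$ $\mathcal{U}$-absorbing, hence i) holds.

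This leaves \emph{ii)~$\Rightarrow$~iii)}, where one must exhibit a \emph{single} function that is excessive for both $\mathcal{U}$ and $\mathcal{U}^\ast$ and vanishes exactly on $A$ modulo an $m$-negligible set; the subtlety is that the $\mathcal{U}$- and $\mathcal{U}^\ast$-excessive versions of $1_{E\setminus A}$ furnished separately by Remark~\ref{rem 2.14}~ii) need not be the same function. My plan is to run the construction from the proof of i)~$\Rightarrow$~iii) of Proposition~\ref{prop2.4} simultaneously for the two resolvents: put $B_0:=A$ and $B_{n+1}:=B_n\cap[U(1_{E\setminus B_n})=0]\cap[U^\ast(1_{E\setminus B_n})=0]$. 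Using ii) together with the duality of Proposition~\ref{prop2.4} (so that $A$ and $E\setminus A$ are absorbing for \emph{both} $\mathcal{U}$ and $\mathcal{U}^\ast$), one checks inductively that $B_n=A$ $m$-a.e., and that $B:=\bigcap_n B_n$ satisfies $B=A$ $m$-a.e.\ and $U(1_{E\setminus B})=0=U^\ast(1_{E\setminus B})$ on $B$; this forces $1_{E\setminus B}$ to be simultaneously $\mathcal{U}$-supermedian and $\mathcal{U}^\ast$-supermedian. The required $s$ is then obtained by passing to the appropriate excessive regularization. I expect this last passage to be the main obstacle: one must verify that taking the $\mathcal{U}$- (resp.\ $\mathcal{U}^\ast$-) excessive regularization neither enlarges the zero set modulo $m$ nor destroys the supermedian property for the other resolvent, and it is here that the standing structural properties of $\mathcal{U}$ recorded in the Preliminaries (such as $1\in\mathcal{E}(\mathcal{U}_\beta)\cap\mathcal{E}(\mathcal{U}^\ast_\beta)$ and $\sigma(\mathcal{E}(\mathcal{U}_\beta))=\mathcal{B}$) should be invoked to exclude the degenerate situation in which that zero set genuinely grows.
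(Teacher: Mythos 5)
Your treatment of i)~$\Leftrightarrow$~ii), iii)~$\Rightarrow$~ii), i)~$\Rightarrow$~iv) and iv)~$\Rightarrow$~i) is correct, and it uses exactly the ingredients the paper intends (the paper states the corollary without proof, as a direct consequence of Propositions \ref{prop 2.2}, \ref{prop2.4}, \ref{prop 2.15}, \ref{prop 2.16} and Remark \ref{rem 2.14}); note also that i)~$\Rightarrow$~iii) could be started more directly, since by Proposition \ref{prop 2.16} the function $1_{E\setminus A}$ is invariant for both $\mathcal{U}$ and $\mathcal{U}^\ast$, so that $\alpha U_\alpha 1_{E\setminus A}\le 1_{E\setminus A}$ and $\alpha U^\ast_\alpha 1_{E\setminus A}\le 1_{E\setminus A}$ $m$-a.e. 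The problem is the implication ii)~$\Rightarrow$~iii), which you flag yourself and do not complete; this is a genuine gap, not a loose end.

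Concretely, your iteration does yield $B=A$ $m$-a.e.\ with $U(1_{E\setminus B})=U^\ast(1_{E\setminus B})=0$ on $B$, so $w:=1_{E\setminus B}$ is supermedian for both resolvents; but iii) demands a single $s\in\mathcal{E}(\mathcal{U})\cap\mathcal{E}(\mathcal{U}^\ast)$, and the passage from ``supermedian for both'' to ``excessive for both'' is exactly what is missing. If you take the $\mathcal{U}$-excessive regularization $\hat w=\sup_k kU_k w$, the only estimate available is $\alpha U^\ast_\alpha \hat w\le \alpha U^\ast_\alpha w\le w$, not $\le\hat w$, because $U^\ast_\alpha$ and $U_k$ do not commute; so regularizing with respect to one resolvent may destroy supermedianity with respect to the other, and alternating the two regularizations just runs in a circle. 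The structural facts you invoke ($1\in\mathcal{E}(\mathcal{U}_\beta)\cap\mathcal{E}(\mathcal{U}^\ast_\beta)$, $\sigma(\mathcal{E}(\mathcal{U}_\beta))=\mathcal{B}$) control only the $m$-a.e.\ picture: they give $\hat w=w$ $m$-a.e.\ and show that the set where excessivity of $w$ (for either resolvent) can fail is $m$-negligible, but excessivity is a pointwise requirement and an $m$-null exceptional set cannot be removed by measure-theoretic arguments alone. To close the argument one needs pointwise input from the standing framework, for instance property d) of the Preliminaries (every point is a non-branch point for $\mathcal{U}$ and $\mathcal{U}^\ast$), or, when $\mathcal{U}$ is the resolvent of a right process, a fine-topology version of your construction in the spirit of Lemma \ref{lem 2.1.15} carried out simultaneously for $\mathcal{U}$ and $\mathcal{U}^\ast$: arranging $B$ to be finely closed for both structures makes $1_{E\setminus B}$ finely lower semicontinuous and supermedian for each, hence excessive for each by \cite{BeBo04}, Corollary 1.3.4. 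As written, the decisive implication of the corollary is asserted but not proved.
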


The next main theorem collects several characterizations of invariance and also shows that, like absorbance, invariance is determined by only one operator $U_\alpha$. 

Let
\[
\mathcal{I}_p : = \{ u \in L^p(E, m) : \alpha U_\alpha u = u \; m\mbox{-a.e.}, \; \alpha > 0 \}.  
\]

\begin{thm} \label{thm 2.19} 
Let $u \in L^p(E, m)$, $1 \leq p < \infty $ and consider the following conditions.

\vspace{0.1cm}

i) $\alpha U_{\alpha}u = u$ $m$-a.e. for one (and therefore for all) $\alpha > 0$.

\vspace{0.2cm}

ii) $\alpha U_{\alpha}^\ast u = u$ $m$-a.e., $\alpha > 0$. 

\vspace{0.1cm}

iii) The function $u$ is $\mathcal{U}$-invariant.

\vspace{0.1cm}

iv) $U_\alpha u = u U_\alpha 1$ and $ U_\alpha^\ast u = u U_\alpha^\ast 1$ $m$-a.e. for one (and therefore for all) $\alpha > 0$.

\vspace{0.1cm}

v) The function $u$ is measurable w.r.t. the $\sigma$-algebra of all $\mathcal{U}$-invariant sets.

\vspace{0.1cm}

Then $\mathcal{I}_p$ is a vector lattice w.r.t. the pointwise order relation and i) $\Leftrightarrow$ ii) $\Rightarrow$ iii) $\Leftrightarrow$ iv) $\Leftrightarrow$ v).

If $\alpha U_\alpha 1 = 1$ or $\alpha U^\ast_\alpha 1 = 1$ $m$-a.e. then assertions i) - v) are equivalent.

If $m(E) < \infty$ and $p = \infty$ then all of the statements above are still true.
\end{thm}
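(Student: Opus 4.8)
\textit{Proof plan.} I would prove the assertions in the order: the resolvent‑equation bootstrap behind the ``for one, hence for all'' clauses; the vector‑lattice property of $\mathcal{I}_p$; then $\mathrm{i)}\Rightarrow\mathrm{iii)}$; then $\mathrm{i)}\Rightarrow\mathrm{ii)}$ using $\mathrm{iii)}$; then $\mathrm{iii)}\Leftrightarrow\mathrm{iv)}\Leftrightarrow\mathrm{v)}$; and finally the two conditional statements and the case $m(E)<\infty$, $p=\infty$. First, if $\alpha_0 U_{\alpha_0}u=u$ $m$-a.e., then commutativity of the $U_\alpha$ and the resolvent equation give $\beta U_\beta u=u$ $m$-a.e. for every $\beta>0$, so under $\mathrm{i)}$ one has $\alpha U_\alpha u=u$ for all $\alpha$. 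For the lattice property fix $c>0$ and note $(u-c)^{+}+c=\max(u,c)\ge u$ with $\max(u,c)-u=(c-u)^{+}\in p\mathcal{B}$; applying $\alpha U_\alpha$ and using $\alpha U_\alpha u=u$, $\alpha U_\alpha 1\le 1$ and $\alpha U_\alpha(c-u)^{+}\ge 0$ yields $\alpha U_\alpha(u-c)^{+}\ge (u-c)^{+}$ $m$-a.e. A H\"older estimate on $[u>c]$ shows $(u-c)^{+}\in L^{1}(E,m)\cap L^{p}(E,m)$, so sub‑invariance of $m$ upgrades this to $\alpha U_\alpha(u-c)^{+}=(u-c)^{+}$ $m$-a.e., i.e.\ $(u-c)^{+}\in\mathcal{I}_p$; letting $c\downarrow 0$ and using monotone convergence gives $u^{+}\in\mathcal{I}_p$, whence $\mathcal{I}_p$ is a vector lattice. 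The same computation applied to $v\in\mathcal{I}_p$, $v\ge 0$, gives $v\wedge c\in\mathcal{I}_p$.

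Next, $\mathrm{i)}\Rightarrow\mathrm{iii)}$: we may assume $u\ge 0$. Applying the truncation remark to $n(u-c)^{+}$ and letting $n\to\infty$ gives $\alpha U_\alpha 1_{[u>c]}=1_{[u>c]}$ $m$-a.e.\ for every $c>0$; subtracting this from $\alpha U_\alpha 1\le 1$ yields $U1_{[u\le c]}=0$ on $[u>c]$ and $U1_{[u>c]}=0$ on $[u\le c]$, so both $[u>c]$ and its complement are $\mathcal{U}$-absorbing, and by Corollary \ref{coro 2.17} the function $1_{[u>c]}$ is $\mathcal{U}$-invariant. The layer‑cake identity $u=\int_{0}^{\infty}1_{[u>c]}\,dc$, Tonelli for the kernels $U_\alpha$, and a Fubini argument in $c$ then give $U_\alpha(uf)=uU_\alpha f$ $m$-a.e.\ for all $f\in bp\mathcal{B}$, which is $\mathrm{iii)}$.

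Now $\mathrm{i)}\Rightarrow\mathrm{ii)}$. By $\mathrm{iii)}$ and Proposition \ref{prop 2.16}, $u$ is $\mathcal{U}^{\ast}$-invariant, hence $\alpha U_\alpha^{\ast}u=u\,\alpha U_\alpha^{\ast}1$ $m$-a.e.; since also $\alpha U_\alpha u=u$, it suffices to show $\alpha U_\alpha^{\ast}1=1$ $m$-a.e.\ on $[u>0]=\bigcup_{n}[u>1/n]$. Fix $c>0$ and put $A_c:=[u>c]$. Then $A_c$ is absorbing for both $\mathcal{U}$ and $\mathcal{U}^{\ast}$ (Corollary \ref{coro 2.17}, Proposition \ref{prop2.4}), and $\mu_c:=(u-c)^{+}\cdot m$ is a \emph{finite} measure; because $(u-c)^{+}$ is $\mathcal{U}$- and $\mathcal{U}^{\ast}$-invariant, identity (\ref{eq 2.1}) holds for $\mathcal{U}$, $\mathcal{U}^{\ast}$ with $m$ replaced by $\mu_c$, and this passes to the restrictions to $A_c$. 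From $\alpha U_\alpha(u-c)^{+}=(u-c)^{+}$ one gets $U(u-c)^{+}=+\infty$ on $A_c$ with $(u-c)^{+}\in L^{1}$, so the restriction of $\mathcal{U}$ to $A_c$ is recurrent by Proposition \ref{prop2.3}, $\mathrm{iii)}\Rightarrow\mathrm{i)}$; since $\mu_c(A_c)<\infty$, Corollary \ref{coro 2.4} yields $\alpha U_\alpha^{\ast}1=1$ $\mu_c$-a.e., hence $m$-a.e., on $A_c$. Taking $c=1/n\to 0$ proves $\mathrm{i)}\Rightarrow\mathrm{ii)}$, and $\mathrm{ii)}\Rightarrow\mathrm{i)}$ is the symmetric statement with $\mathcal{U}$ and $\mathcal{U}^{\ast}$ interchanged.

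Finally, $\mathrm{iii)}\Rightarrow\mathrm{iv)}$ follows by taking $f=1$ and applying Proposition \ref{prop 2.16}; for $\mathrm{iv)}\Rightarrow\mathrm{iii)}$, the two inequalities contained in $\mathrm{iv)}$ give $U_\alpha(u-c)^{+}\ge(u-c)^{+}U_\alpha 1$ and its $\mathcal{U}^{\ast}$-analogue, while weak duality forces $\int [U_\alpha(u-c)^{+}-(u-c)^{+}U_\alpha 1]\,dm=-\int [U_\alpha^{\ast}(u-c)^{+}-(u-c)^{+}U_\alpha^{\ast}1]\,dm$ with both integrands nonnegative, so both vanish; running the truncation level and then $c\downarrow 0$ makes each $[u>c]$ absorbing, and one concludes by layer‑cake as in $\mathrm{iii)}$, the ``for one, hence for all'' clause being again a resolvent‑equation argument. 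The equivalence $\mathrm{iii)}\Leftrightarrow\mathrm{v)}$ is Corollary \ref{coro 2.17} together with the same layer‑cake/monotone‑class reasoning. For the conditional statements: if $\alpha U_\alpha 1=1$ $m$-a.e.\ then $\mathrm{iii)}\Rightarrow\mathrm{i)}$ since $\alpha U_\alpha u=\alpha U_\alpha(u\cdot 1)=u\,\alpha U_\alpha 1=u$ (Remark \ref{rem 2.14}), while if $\alpha U_\alpha^{\ast}1=1$ $m$-a.e.\ the same reasoning gives $\mathrm{iii)}\Rightarrow\mathrm{ii)}\Rightarrow\mathrm{i)}$; and when $m(E)<\infty$, $p=\infty$ one has $L^{\infty}(E,m)\subseteq L^{1}(E,m)$, so all the integrability used above is automatic and every step goes through unchanged. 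The step I expect to require the most care is $\mathrm{i)}\Rightarrow\mathrm{ii)}$ — the self‑duality of the harmonic space — where the device of passing, on each super‑level set, to the finite sub‑invariant measure $(u-c)^{+}\cdot m$ and invoking the finite‑mass case of Corollary \ref{coro 2.4} seems unavoidable; the technical heart is checking that weak duality is inherited by $\mu_c$ and survives restriction to $A_c$.
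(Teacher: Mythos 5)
Your proposal is correct in substance and, for most of the theorem, follows the paper's own architecture: truncations $(u-c)^{+}$, absorbing level sets via Proposition \ref{prop2.4} and Corollary \ref{coro 2.17}, approximation of $u$ by functions built from its level sets, and a duality integral identity to upgrade the inequalities in iv) to equalities. Two steps are genuinely different. For the lattice property and i) $\Rightarrow$ iii) you replace the paper's H\"older trick with $p$-th powers (which yields $\alpha U_\alpha(u-c)^{+}=(u-c)^{+}$ directly, even at $c=0$) by the simpler observation that for $c>0$ one has $(u-c)^{+}\in L^{1}(E,m)$, so $\alpha U_\alpha(u-c)^{+}\ge(u-c)^{+}$ plus sub-invariance of $m$ forces equality, and $c\downarrow 0$ recovers $u^{+}\in\mathcal{I}_p$; this is a valid and slightly more elementary route. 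More substantially, your i) $\Rightarrow$ ii) changes the measure to $\mu_c=(u-c)^{+}\cdot m$, checks that weak duality (\ref{eq 2.1}) is inherited by $\mu_c$ (this is exactly the $\mathcal{U}^{\ast}$-invariance of $(u-c)^{+}$), proves recurrence with respect to $\mu_c$, and invokes the finite-mass case of Corollary \ref{coro 2.4}; the paper instead works directly on the slices $A=[b<u\le c]$, using that $\alpha U_\alpha 1=1$ $m$-a.e.\ on $[u>0]$ and the one-line computation $\int 1_A\,dm\ge\int\alpha U^{\ast}_\alpha 1_A\,dm=\int 1_A\,\alpha U_\alpha 1\,dm=\int 1_A\,dm$, which forces $\alpha U^{\ast}_\alpha 1_A=1_A$. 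Your route makes the conceptual point that on each super-level set one is in the finite (sub-)invariant-measure situation; the paper's is shorter and needs only the integrability of $1_A$.

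Two details in your i) $\Rightarrow$ ii) need repair, both easy. First, to get $\mu_c$-recurrence from Proposition \ref{prop2.3} iii) you test with $f_0=(u-c)^{+}$ and say it is ``in $L^{1}$''; what is needed is $(u-c)^{+}\in L^{1}(\mu_c)$, i.e.\ $((u-c)^{+})^{2}\in L^{1}(m)$, which can fail when $1\le p<2$. Take instead $f_0=(u-c)^{+}\wedge 1$: it is in $L^{1}(\mu_c)$ since $\mu_c$ is finite, it lies in $\mathcal{I}_p$ by your own truncation step, hence $Uf_0=+\infty$ $m$-a.e.\ (so $\mu_c$-a.e.) on $[u>c]$. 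Second, the ``restriction to $A_c$'' is unnecessary and, as stated, delicate: $A_c$ is absorbing only modulo $m$, so the restricted kernels satisfy the resolvent equation only $m$-a.e.\ unless you first replace $A_c$ by a version $B$ with $1_{E\setminus B}$ excessive (Proposition \ref{prop2.4} iii)). Since $\mu_c$ is carried by $A_c$, you can simply apply Proposition \ref{prop2.3} and Corollary \ref{coro 2.4} to $\mathcal{U}$ on all of $E$ with reference measure $\mu_c$ (finite, sub-invariant, and with $\mathcal{U}^{\ast}$ an adjoint for it by your duality check), conclude $\alpha U^{\ast}_\alpha 1=1$ $\mu_c$-a.e., i.e.\ $m$-a.e.\ on $[u>c]$, and let $c\downarrow 0$. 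With these fixes, and spelling out the reduction to $u\ge 0$ in iv) $\Rightarrow$ iii) (let $c\downarrow 0$ in the equalities to see that $u^{+}$, hence $u^{-}$, satisfies iv)), the argument is complete; the remaining parts (iii) $\Leftrightarrow$ iv) $\Leftrightarrow$ v), the conditional equivalences, and the case $p=\infty$, $m(E)<\infty$) coincide with the paper's proof up to routine rearrangement.
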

\begin{proof}
i) $\Leftrightarrow$ ii) $\Rightarrow$ iii) and $\mathcal{I}_p$ is a vector lattice. 
It is clear that $\mathcal{I}_p$ is a vector space. If $u \in \mathcal{I}_p$ and $c$ is a positive real number then $m$-a.e. $\alpha U_\alpha (u-c)^+ \geq \alpha U_\alpha (u-c) \geq u - c$, hence $\alpha U_\alpha (u-c)^+ \geq (u-c)^+$ and by H\"older inequality we get $m$-a.e.
\[
\alpha U_\alpha ((u-c)^+)^p \geq \alpha^p U_\alpha ((u-c)^+)^p (U_\alpha 1)^{p-1} \geq (\alpha U_\alpha (u-c)^+)^p \geq ((u-c)^+)^p.
\]
Since $(u-c)^+ \in L^p(E, m)$ we have
\[
\int{((u-c)^+)^p dm} \leq \int{\alpha U_\alpha ((u-c)^+)^p dm} = \int{((u-c)^+)^p \alpha U_\alpha^\ast 1 dm} \leq \int{((u-c)^+)^p dm},
\]
therefore $m$-a.e.
\begin{equation} \label{eq 2.2}
\alpha U_\alpha ((u-c)^+)^p = ((u-c)^+)^p, \alpha U_\alpha (u-c)^+ = (u-c)^+ \;{\rm and} \; \alpha U_\alpha (u-c)^- \leq (u-c)^-.
\end{equation}
If we take $c = 0$ in the second relation in (\ref{eq 2.2}) it yields that $\mathcal{I}_p$ is a vector lattice hence we may assume that $u$ is positive. 
Furthermore, by Proposition \ref{prop2.4} it follows that the sets $[u \leq c] = [(u-c)^+ = 0]$ and $[ u \geq c] = [(u-c)^- = 0]$ are $\mathcal{U}$-absorbing for any $c \in \mathbb{R}_+$. 
Because $[u > c] = \bigcap\limits_{n=1}^\infty [u \geq c + \frac{1}{n}]$ from Corollary \ref{coro 2.7} and Corollary \ref{coro 2.17} we obtain that $1_{[u \leq c]}$ and consequently $1_{[b < u \leq c]}$  are $\mathcal{U}$-invariant for avery $b, c \in \mathbb{R}_+^\ast$. 
By approximating $u$ with linear combinations of functions of type $1_{[b < u \leq c]}$ and using monotone convergence we deduce that $u$ is $\mathcal{U}$-invariant and the implication i) $\Rightarrow$ iii) is proved. We continue by showing that i) implies ii) (the converse follows by duality). 
For simplicity, let us generically  write $A : = [b < u \leq c] \subset [u > 0]$, $b, c \in \mathbb{R}_+^\ast$ and recall that $u \in \mathcal{I}_p$ and $1_A$ are $\mathcal{U}$-invariant. 
In particular, we have $m$-a.e. that $U_\alpha u = u U_\alpha 1$, $\alpha U_\alpha 1 = 1$ on $[u > 0] \supset A$, $\alpha U^\ast_\alpha 1_A \leq 1_A$ and the function $1_A$ is integrable. 
Then
\[
\int{1_A dm} \geq \int{\alpha U_\alpha^\ast 1_A dm} = \int{1_A \alpha U_\alpha 1 dm} = \int{1_A dm},
\]
hence $\alpha U_\alpha^\ast 1_A = 1_A$ $m$-a.e. and again by approximating with step functions we conclude that $\alpha U_\alpha^\ast u = u$ $m$-a.e.

\vspace{0.2cm}

Clearly iii) implies iv).

iv) $\Rightarrow $ v). Assume that $u$ satisfies iv) for one $\alpha > 0$. 
If $c$ is a positive real number then $(u-c)^+ \in L^p(E, m)$ and $U_\alpha (u-c)^+ \geq U_\alpha (u-c) = (u-c) U_\alpha 1$, hence $U_\alpha (u-c)^+ \geq (u-c)^+ U_\alpha 1$ $m$-a.e. Moreover, since $U_\alpha ((u-c)^+)^p (U_\alpha 1)^{p-1} \geq (U_\alpha (u-c)^+)^p \geq ((u-c)^+)^p (U_\alpha 1)^p$ we have that $U_\alpha ((u-c)^+)^p \geq ((u-c)^+)^p U_\alpha 1$ $m$-a.e. 
Analogously, we get $U_\alpha^\ast (u-c)^+ \geq (u-c)^+ U_\alpha^\ast 1$ and $U_\alpha^\ast ((u-c)^+)^p \geq ((u-c)^+)^p U_\alpha^\ast 1$ $m$-a.e. 
Then
\[
\int{U_\alpha ((u-c)^+)^p +U_\alpha^\ast ((u-c)^+)^p dm} = \int{((u-c)^+)^p (U_\alpha^\ast 1 + U_\alpha 1) dm} \leq
\]
\[
\leq \int{U_\alpha ((u-c)^+)^p +U_\alpha^\ast ((u-c)^+)^p dm}
\]
which implies $m$-a.e.
\begin{equation} \label{eq 2.3}
U_\alpha (u-c)^+ = (u-c)^+ U_\alpha 1 \; {\rm and} \; U_\alpha^\ast (u-c)^+ = (u-c)^+ U_\alpha^\ast 1.
\end{equation}
Then $U_\alpha \inf (n(u-c)^+, 1) \leq \inf ( n(u-c)^+, 1) U_\alpha 1$ $m$-a.e. and letting $n$ tend to infinity we get $ U_\alpha 1_{[u > c]} \leq 1_{[u > c]} U_\alpha 1$ and analogously, $ U^\ast_\alpha 1_{[u > c]} \leq 1_{[u > c]} U^\ast_\alpha 1$ $m$-a.e. 
By Remark \ref{rem 2.5}, b) it follows that $[u \leq c]$ is $\mathcal{U}$-invariant. 
Taking $c = 0$ in (\ref{eq 2.3}) we obtain that the set of functions satisfying condition iv) is a vector lattice w.r.t. the pointwise order relation so we may assume that $u$ is positive. 
It follows that condition v) holds.

v) $\Rightarrow$ iii). Since $\mathcal{A}_p$ is a lattice we may assume that $u$ is positive. 
If $u$ satisfies v) then it can be approximated by an increasing sequence of invariant simple functions and by monotone convergence we conclude that $u$ is $\mathcal{U}$-invariant.

Finally, if $\alpha U_\alpha 1 = 1$ or $\alpha U^\ast_\alpha 1 = 1$ $m$-a.e. then $u \in \mathcal{A}_p$ if and only if $u \in \mathcal{I}_p$ and all of the assertions are equivalent. 
\end{proof} 

\begin{rem} \label{rem 2.16} 
i)  Similar characterizations for invariance as in Theorem \ref{thm 2.19}, but in the recurrent case and for functions which are bounded or integrable with bounded negative parts, as well as the fact that, in terms of semigroups (assuming a strong analyticity assumption), absorbance and invariance are determined by only one operator, were already obtained in \cite{Sc04}.

ii)  If $u \in L^p(E, m)$, $1 \leq p < \infty$ is in $\mathcal{I}_p$ (resp. is $\mathcal{U}$-invariant) then $\inf{(u, c)}$ is in $\mathcal{I}_p$ (resp. is $\mathcal{U}$-invariant) for all positive real numbers $c$. 
This is true by relation $(\ast)$ (resp. $(\ast \ast)$) ( see the proof of Theorem \ref{thm 2.19}) and the fact that $\inf{(u, c)} = u - (u - c)^+$.

iii) A set $A \in \mathcal{B}$ is $\mathcal{U}$-invariant if and only of  $U_\alpha 1_A = 1_A U_\alpha 1$ since the last equality implies that $U_\alpha 1_{E \setminus A} = 1_{E \setminus A} U_\alpha 1$ $m$-a.e. hence $A$ and $E \setminus A$ are $\mathcal{U}$-absorbing. 
However, if $u \in L^p(E, m)$, $1 \leq p \leq \infty$ we do not know if $U_\alpha u = u U_\alpha 1$ $m$-a.e. (without assuming $ U_\alpha^\ast u = u U_\alpha^\ast 1$ $m$-a.e. as in condition v) of the above theorem ) is enough for $u$ to be $\mathcal{U}$-invariant.
\end{rem}

If $p=\infty=m(E)$ then we have the following version of Theorem \ref{thm 2.19}.

\begin{prop} \label{prop 2.17} 
If $\mathcal{U}$ is $m$-recurrent then the following assertions are equivalent for a function $u \in L^\infty (E, m)$.

i) $\alpha U_\alpha u = u$ $m$-a.e., $\alpha > 0$.

\vspace{0.1cm}

ii) $\alpha U^\ast_\alpha u = u$ $m$-a.e., $\alpha > 0$.

\vspace{0.1cm}

iii) The function $ u $ is $\mathcal{U}$-invariant.

\vspace{0.1cm}

iv) The function $ u $ is measurable w.r.t. the $\sigma$-algebra of all $\mathcal{U}$-invariant sets.
\end{prop}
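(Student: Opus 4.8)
The strategy is to bypass the $L^p$-integration step used in the proof of Theorem~\ref{thm 2.19} --- which is unavailable when $p=\infty$ and $m(E)=\infty$ --- and instead extract the required equalities directly from the recurrence criterion of Proposition~\ref{prop2.3} v), applied to suitable truncations of $u$. As a preliminary I would record two facts: $\mathcal{U}^\ast$ is also $m$-recurrent (Proposition~\ref{prop2.3}, i)~$\Leftrightarrow$~ii)); and since $1\in L^\infty(E,m)$ satisfies $\alpha U_\alpha 1\le 1$ by sub-Markovianity, Proposition~\ref{prop2.3} v) forces $\alpha U_\alpha 1 = 1$ $m$-a.e. for all $\alpha>0$, and likewise $\alpha U^\ast_\alpha 1 = 1$ $m$-a.e.

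The heart of the argument is i)~$\Rightarrow$~v) (= iv) in this proposition). Let $u\in L^\infty(E,m)$ with $\alpha U_\alpha u = u$ $m$-a.e. for all $\alpha>0$, and fix $c\in\mathbb{R}$. From $\alpha U_\alpha 1 = 1$ one gets $\alpha U_\alpha(u-c) = u-c$ $m$-a.e., hence $\alpha U_\alpha(u-c)^+\ge (u-c)^+$ $m$-a.e. by positivity of $U_\alpha$. Writing $\inf(u,c) = u-(u-c)^+$ this yields
\[
\alpha U_\alpha \inf(u,c) = u - \alpha U_\alpha(u-c)^+ \le u-(u-c)^+ = \inf(u,c) \quad m\mbox{-a.e.,} \ \alpha>0 .
\]
Since $\inf(u,c)\in L^\infty(E,m)$, Proposition~\ref{prop2.3} v) upgrades this to $\alpha U_\alpha\inf(u,c) = \inf(u,c)$ $m$-a.e., whence also $\alpha U_\alpha(u-c)^+ = (u-c)^+$ and (using $(u-c)^- = (u-c)^+ - (u-c)$) $\alpha U_\alpha(u-c)^- = (u-c)^-$ $m$-a.e., for all $\alpha>0$. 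Applying Proposition~\ref{prop 2.2} to the bounded functions $(u-c)^\pm$ gives bounded $\mathcal{U}$-excessive $m$-versions, so by Proposition~\ref{prop2.4} (i)~$\Leftrightarrow$~(iv) the sets $[u\le c] = [(u-c)^+ = 0]$ and $[u\ge c] = [(u-c)^- = 0]$ are $\mathcal{U}$-absorbing; then $[u>c] = \bigcap_{n\ge 1}[u\ge c+\tfrac1n]$ is $\mathcal{U}$-absorbing by Corollary~\ref{coro 2.7}. Thus $[u\le c]$ and its complement are both $\mathcal{U}$-absorbing, so $1_{[u\le c]}$ is $\mathcal{U}$-invariant by Corollary~\ref{coro 2.17}. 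Since $c$ was arbitrary and the $\mathcal{U}$-invariant sets form a $\sigma$-algebra, $u$ is measurable with respect to it, which is iv).

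The remaining implications are short. For iv)~$\Rightarrow$~iii): $\mathcal{A}_\infty$ is a vector lattice (Proposition~\ref{prop 2.15}), so I may assume $u\ge 0$; approximating $u$ from below by finite linear combinations of indicators of $\mathcal{U}$-invariant sets and passing to the limit in $U_\alpha(u_nf)=u_nU_\alpha f$ ($f\in bp\mathcal{B}$) by monotone convergence shows $u$ is $\mathcal{U}$-invariant. For iii)~$\Rightarrow$~i): take $f=1$ in the definition of invariance to get $U_\alpha u = uU_\alpha 1$, so $\alpha U_\alpha u = u\,\alpha U_\alpha 1 = u$ $m$-a.e. Finally, for i)~$\Leftrightarrow$~ii): by Proposition~\ref{prop2.4} (i)~$\Leftrightarrow$~(ii) together with Corollary~\ref{coro 2.17}, a set is $\mathcal{U}$-invariant if and only if it is $\mathcal{U}^\ast$-invariant, so condition iv) is unchanged when $\mathcal{U}$ is replaced by $\mathcal{U}^\ast$; since every step above applies verbatim to the $m$-recurrent resolvent $\mathcal{U}^\ast$ (for which $\alpha U^\ast_\alpha 1 = 1$ $m$-a.e.), we obtain ii)~$\Leftrightarrow$~iv)~$\Leftrightarrow$~i).

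I expect the only delicate point to be the displayed inequality in the i)~$\Rightarrow$~iv) step: one must avoid any integrability hypothesis on $u$ and instead play the two one-sided relations $\alpha U_\alpha(u-c)^+\ge (u-c)^+$ and its consequence $\alpha U_\alpha\inf(u,c)\le\inf(u,c)$ against each other, so that the recurrence criterion Proposition~\ref{prop2.3} v) can be invoked on the bounded function $\inf(u,c)$. Everything else is bookkeeping with the structure theory of absorbing and invariant sets already established above.
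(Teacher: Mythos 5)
Your argument is correct, and it closes the cycle i) $\Rightarrow$ iv) $\Rightarrow$ iii) $\Rightarrow$ i) with ii) handled by duality, but the route to the key equivalence i) $\Leftrightarrow$ iv) is genuinely different from the paper's. The paper simply quotes an external result (\cite{Sc04}, Corollary 21) for i) $\Leftrightarrow$ iv), notes that $\mathcal{U}^\ast$ is $m$-recurrent to get ii) $\Leftrightarrow$ iv), and disposes of iv) $\Rightarrow$ iii) by simple-function approximation and of iii) $\Rightarrow$ i) as clear; your treatment of these last three points is essentially identical. What you do differently is to prove i) $\Rightarrow$ iv) from scratch: you observe that recurrence (Proposition \ref{prop2.3}, v)) forces $\alpha U_\alpha 1=1$ and $\alpha U_\alpha^\ast 1=1$ $m$-a.e., deduce $\alpha U_\alpha(u-c)^+\geq (u-c)^+$ hence $\alpha U_\alpha\inf(u,c)\leq\inf(u,c)$, and then invoke Proposition \ref{prop2.3}, v) on the bounded truncation $\inf(u,c)$ to upgrade this to equality, recovering $\alpha U_\alpha(u-c)^{\pm}=(u-c)^{\pm}$ without any integrability of $u$; from there the absorbing/invariant-set machinery (Propositions \ref{prop 2.2}, \ref{prop2.4}, Corollaries \ref{coro 2.7}, \ref{coro 2.17}) runs exactly as in the proof of Theorem \ref{thm 2.19}. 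In effect you extend the proof of Theorem \ref{thm 2.19} to $p=\infty$ with $m(E)=\infty$ by trading the $L^p$-integration step for the recurrence criterion, which is precisely where the hypothesis of $m$-recurrence enters. Your i) $\Leftrightarrow$ ii) via the self-duality of the invariant $\sigma$-algebra (Proposition \ref{prop2.4}, i) $\Leftrightarrow$ ii), plus Corollary \ref{coro 2.17}) and the $m$-recurrence of $\mathcal{U}^\ast$ matches the paper's duality argument. What your approach buys is a proof self-contained within the paper's own toolkit, making transparent exactly how recurrence substitutes for integrability; what the paper's citation buys is brevity. The only points deserving a word of care, which you implicitly handle correctly, are that Proposition \ref{prop2.3}, v) is stated for signed bounded functions (so applying it to $\inf(u,c)$, which need not be positive, is legitimate) and that all algebraic manipulations with $U_\alpha$ on signed bounded functions are justified by sub-Markovianity.
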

\begin{proof}
The equivalence i) $\Leftrightarrow$ iv) follows by \cite{Sc04}, Corollary 21. 
Also, from Proposition \ref{prop2.3} we have that $\mathcal{U}^\ast$ is $m$-recurrent, hence ii) $\Leftrightarrow$ iv).
The implication iv) $\Rightarrow$ iii) is obtained by approximating with simple functions, and
since iii) $\Rightarrow$ i) is clear, the proof is complete.
\end{proof}

However, we recall that if $m(E)<\infty$ then Proposition \ref{prop 2.17} is just a particular case of Theorem \ref{thm 2.19}.

The next proposition shows that in condition v) of Theorem \ref{thm 2.19} we can put inequality instead of equality.

\begin{prop} \label{prop 2.18} 
The following assertions are equivalent for a function $u \in L^p(E,m)$ such that $u^- \in L^1(E, m)$, $1 \leq p < \infty$.

i) $U_\alpha u = u U_\alpha 1$ and $ U_\alpha^\ast u = u U_\alpha^\ast 1$ $m$-a.e., $\alpha > 0$.

\vspace{0.1cm}

ii) $U_\alpha u \geq u U_\alpha 1$ and $U_\alpha^\ast u \geq u U_\alpha^\ast 1$ $m$-a.e., $\alpha > 0$.
\end{prop}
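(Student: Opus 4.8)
The implication (i) $\Rightarrow$ (ii) is trivial, so the plan is to establish (ii) $\Rightarrow$ (i). Fix $\alpha > 0$. Since $u^- \in L^1(E, m)$ and, by (\ref{eq 2.1}) with $g = 1$, the measure $m$ is sub-invariant for $\mathcal{U}^\ast$ as well, both $U_\alpha u^-$ and $U_\alpha^\ast u^-$ are finite $m$-a.e.; hence $U_\alpha u = U_\alpha u^+ - U_\alpha u^-$ and $U_\alpha^\ast u = U_\alpha^\ast u^+ - U_\alpha^\ast u^-$ are well defined $m$-a.e. The strategy is to first upgrade the two inequalities in (ii), restricted to $u^+$, to equalities (this is the substantial part), then to read off from (ii) the matching inequalities for $u^-$, upgrade those as well, and finally subtract.

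For the first step, from $u^+ \geq u$ and the positivity of $U_\alpha$, $U_\alpha^\ast$, together with (ii), one checks that $U_\alpha u^+ \geq u^+ U_\alpha 1$ and $U_\alpha^\ast u^+ \geq u^+ U_\alpha^\ast 1$ $m$-a.e. (on $[u < 0]$ the right-hand sides vanish and the inequalities are clear). Applying the H\"older inequality to the kernels $U_\alpha$ and $U_\alpha^\ast$, exactly as in the proof of Theorem \ref{thm 2.19}, these yield $U_\alpha\big((u^+)^p\big) \geq (u^+)^p U_\alpha 1$ and $U_\alpha^\ast\big((u^+)^p\big) \geq (u^+)^p U_\alpha^\ast 1$ $m$-a.e.

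Now $(u^+)^p \in L^1(E, m)$, so (\ref{eq 2.1}) with $f$ or $g$ equal to $1$ gives $\int U_\alpha\big((u^+)^p\big)\, dm = \int (u^+)^p U_\alpha^\ast 1\, dm$ and $\int U_\alpha^\ast\big((u^+)^p\big)\, dm = \int (u^+)^p U_\alpha 1\, dm$, all four integrals being finite. Adding the two inequalities of the previous step and integrating, the terms $\int (u^+)^p U_\alpha 1\, dm$ and $\int (u^+)^p U_\alpha^\ast 1\, dm$ cancel, so the sum of the two nonnegative integrands has zero integral; hence $U_\alpha\big((u^+)^p\big) = (u^+)^p U_\alpha 1$ and $U_\alpha^\ast\big((u^+)^p\big) = (u^+)^p U_\alpha^\ast 1$ $m$-a.e. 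Feeding these equalities back into the H\"older estimate forces $U_\alpha u^+ = u^+ U_\alpha 1$ and $U_\alpha^\ast u^+ = u^+ U_\alpha^\ast 1$ $m$-a.e. Subtracting these from the inequalities in (ii) leaves $U_\alpha u^- \leq u^- U_\alpha 1$ and $U_\alpha^\ast u^- \leq u^- U_\alpha^\ast 1$ $m$-a.e., and since $u^- \in L^1(E, m)$ the same integration argument (now applied directly, without the $p$-th power) upgrades these to equalities. Subtracting once more, $U_\alpha u = u U_\alpha 1$ and $U_\alpha^\ast u = u U_\alpha^\ast 1$ $m$-a.e., which is (i).

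The only real obstacle is that, when $m$ is merely $\sigma$-finite, $u^+$ need not lie in $L^1(E, m)$, so one cannot integrate the $u^+$-inequalities directly; the remedy is to pass to $(u^+)^p \in L^1(E, m)$ through the H\"older inequality for the sub-Markovian kernels and then descend back to $u^+$, a device already used in the proof of Theorem \ref{thm 2.19}. Everything else is routine bookkeeping with (\ref{eq 2.1}).
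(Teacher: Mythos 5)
Your proof is correct and follows essentially the same route as the paper: reduce to $u^+$, pass to $(u^+)^p$ via the H\"older inequality for the sub-Markovian kernels, integrate against $m$ using the duality (\ref{eq 2.1}) to force the inequalities into equalities, descend back to $u^+$, and then treat $u^-$ by direct integration in $L^1$. The only difference is cosmetic (you sum the two inequalities before integrating, while the paper runs a cyclic chain of integral identities), and you in fact spell out the feed-back step into the H\"older estimate that the paper leaves implicit.
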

\begin{proof}
Since the implication i) $\Rightarrow$ ii) is trivial we prove only the converse. If condition ii) holds for $u$ then it holds for $u^+$ too and $U_\alpha (u^+)^p (U_\alpha 1)^{p-1} \geq (U_\alpha u^+)^p \geq (u^+)^p (U_\alpha 1)^p$, hence $U_\alpha (u^+)^p \geq (u^+)^p U_\alpha 1$ $m$-a.e. 
Because the same relations hold for $\mathcal{U}^\ast$ we have
\[
\int{U_\alpha (u^+)^p dm} = \int{(u^+)^p U_\alpha^\ast 1 dm} \leq \int{U_\alpha^\ast (u^+)^p dm} = \int{(u^+)^p U_\alpha 1 dm} \leq \int{U_\alpha (u^+)^p dm}
\] 
It follows that $u^+$ satisfies i) hence $U_\alpha u^- \leq u^- U_\alpha 1 \; {\rm and} \; U_\alpha^\ast u^- \leq u^- U_\alpha^\ast 1$ $m$-a.e. Then
\[
\int{U_\alpha u^- dm} = \int {u^- U_\alpha^\ast 1 dm} \leq \int{U_\alpha^\ast u^- dm} = \int{u^- U_\alpha 1 dm} \leq \int{U_\alpha u^- dm} 
\]
thus condition i) is also verified by $u^-$. 
\end{proof}

\begin{prop} \label{prop 2.20} 
If $\mathcal{U}$ is $m$-recurrent or $m$-symmetric then a set $A \in \mathcal{B}$ is $\mathcal{U}$-absorbing if and only if it is $\mathcal{U}$-invariant.  
\end{prop}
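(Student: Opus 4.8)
The plan starts from the observation that one implication needs no hypothesis on $\mathcal{U}$: if $A$ is $\mathcal{U}$-invariant, i.e. $1_A$ is $\mathcal{U}$-invariant, then Corollary~\ref{coro 2.17} (implication i)$\Rightarrow$ii)) tells us that both $A$ and $E\setminus A$ are $\mathcal{U}$-absorbing, so in particular $A$ is $\mathcal{U}$-absorbing. For the converse the same corollary is the key reduction tool: a $\mathcal{U}$-absorbing set $A$ is $\mathcal{U}$-invariant as soon as we can show that $E\setminus A$ is $\mathcal{U}$-absorbing as well. Thus, in both situations, the entire task is to upgrade ``$A$ absorbing'' to ``$E\setminus A$ absorbing''.

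In the $m$-symmetric case this is immediate. Symmetry means that the adjoint resolvent $\mathcal{U}^{\ast}$ of $\mathcal{U}$ with respect to $m$ can be taken equal to $\mathcal{U}$, so $U^{\ast}=U$ $m$-a.e.; then the equivalence i)$\Leftrightarrow$ii) in Proposition~\ref{prop2.4} says exactly that $A$ is $\mathcal{U}$-absorbing if and only if $E\setminus A$ is $\mathcal{U}^{\ast}$-absorbing, i.e. $\mathcal{U}$-absorbing. Feeding this into Corollary~\ref{coro 2.17} finishes this case.

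In the $m$-recurrent case I would argue as follows. Given $A$ $\mathcal{U}$-absorbing, Proposition~\ref{prop2.4}~iii) provides $B\in\mathcal{B}$ with $A=B$ $m$-a.e. and $1_{E\setminus B}\in\mathcal{E}(\mathcal{U})$. Since $1_{E\setminus B}$ is $\mathcal{U}$-excessive and $\mathcal{U}$ is $m$-recurrent, Proposition~\ref{prop2.3}~v$'$) yields $\alpha U_\alpha 1_{E\setminus B}=1_{E\setminus B}$ $m$-a.e. for every $\alpha>0$. Using the linearity of the kernels $U_\alpha$ and the sub-Markovian inequality $\alpha U_\alpha 1\le 1$, this gives $\alpha U_\alpha 1_B=\alpha U_\alpha 1-1_{E\setminus B}\le 1-1_{E\setminus B}=1_B$ $m$-a.e.; restricting to $E\setminus B$, where $1_B$ vanishes, forces $U_\alpha 1_B=0$ $m$-a.e. on $E\setminus B$ for each $\alpha>0$, hence $U 1_B=\sup_n U_{1/n}1_B=0$ $m$-a.e. on $E\setminus B$, i.e. $E\setminus B$ is $\mathcal{U}$-absorbing. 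Since $B=A$ $m$-a.e. is $\mathcal{U}$-absorbing as well (Remark~\ref{rem 2.5}~a)), Corollary~\ref{coro 2.17} gives that $1_B$, and therefore $1_A$ (Remark~\ref{rem 2.14}~i)), is $\mathcal{U}$-invariant. Alternatively, once $\alpha U_\alpha 1_{E\setminus B}=1_{E\setminus B}$ $m$-a.e. is in hand, one may simply invoke Proposition~\ref{prop 2.17} (with $u=1_{E\setminus B}\in L^\infty(E,m)$) to conclude directly that $1_{E\setminus B}$ is $\mathcal{U}$-invariant, and then apply Corollary~\ref{coro 2.17}~iv)$\Rightarrow$i).

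I do not expect a genuine obstacle here: the only non-mechanical step is the reduction ``absorbance of the complement $\Rightarrow$ invariance'' via Corollary~\ref{coro 2.17}, together with the recognition that recurrence supplies the missing absorbance of $E\setminus A$ through the invariance of excessive functions (Proposition~\ref{prop2.3}~v$'$), or Proposition~\ref{prop 2.17}); everything else is routine manipulation of $m$-a.e. identities and the sub-Markovian property $\alpha U_\alpha 1\le 1$.
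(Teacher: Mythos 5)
Your proposal is correct and takes essentially the same route as the paper: Proposition~\ref{prop2.4} to produce the excessive version $1_{E\setminus B}$, Proposition~\ref{prop2.3}~v$'$) to get $\alpha U_\alpha 1_{E\setminus B}=1_{E\setminus B}$ $m$-a.e., and then Corollary~\ref{coro 2.17} (or, as the alternative, Proposition~\ref{prop 2.17}) to pass to invariance, with the symmetric case reduced to Corollary~\ref{coro 2.17} via duality. The only, harmless, variation is that you use merely $\alpha U_\alpha 1\le 1$ and restrict to $E\setminus B$, whereas the paper's identity $\alpha U_\alpha 1_A=\alpha U_\alpha 1-\alpha U_\alpha 1_{E\setminus B}=1_A$ implicitly invokes $\alpha U_\alpha 1=1$ $m$-a.e., which recurrence supplies.
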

\begin{proof}
The symmetric case follows by Corollary \ref{coro 2.17}. 
Assume that $\mathcal{U}$ is $m$-recurrent. If $A$ is $\mathcal{U}$-absorbing then by Proposition \ref{prop2.4} 
there exists $B \in \mathcal{B}$ such that $B = A$ $m$-a.e. and $1_{E \setminus B} \in \mathcal{E}(\mathcal{U})$. 
By Proposition \ref{prop2.3} we have that $\alpha U_\alpha 1_{E \setminus B} = 1_{E \setminus B}$ $m$-a.e., $\alpha > 0$. 
To get that $A$ is $\mathcal{U}$-invariant we can simply apply Proposition \ref{prop 2.17} 
or notice that $ \alpha U_\alpha 1_A = \alpha U_\alpha 1 - \alpha U_\alpha 1_{E \setminus B} = 1_A $ $m$-a.e.,  
hence $E \setminus A$ is $\mathcal{U}$-absorbing and the implication follows by Corollary \ref{coro 2.17}. 
The converse is clear.
\end{proof}

\begin{coro} \label{coro 2.21} 
Consider the following assertions.

i) $\mathcal{U}$ is $m$-irreducible.

\vspace{0.1cm}

ii) Every $L^p(E, m)$-integrable $\mathcal{U}$-invariant function is constant, $1 \leq p < \infty$.

\vspace{0.2cm}

iii) Every bounded $\mathcal{U}$-invariant function is constant.

\vspace{0.1cm}

Then i) $\Rightarrow$ ii) $\Leftarrow$ iii).

If $m(E) < \infty$ then ii) $\Leftrightarrow$ iii). In addition, if $\mathcal{U}$ is $m$-symmetric then i) $\Leftrightarrow$ ii) $\Leftrightarrow$ iii).

If $\mathcal{U}$ is $m$-recurrent then i) $\Leftrightarrow$ iii). 
\end{coro}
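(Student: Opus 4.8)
The corollary packages several implications, so the plan is to prove them block by block. First I would dispatch the two implications that need no extra hypothesis, namely i) $\Rightarrow$ ii) and iii) $\Rightarrow$ ii); then, under $m(E)<\infty$, the implication ii) $\Rightarrow$ iii) is essentially free, and in the $m$-symmetric and the $m$-recurrent cases Proposition~\ref{prop 2.20} is the bridge that brings in i). For i) $\Rightarrow$ ii) I would take a $\mathcal{U}$-invariant $u\in L^p(E,m)$, $1\le p<\infty$, and use Theorem~\ref{thm 2.19} (equivalence of iii) and v) there) to see that $u$ is measurable with respect to the $\sigma$-algebra of all $\mathcal{U}$-invariant sets; hence each level set $[u>c]$ is a $\mathcal{U}$-invariant set and, by Corollary~\ref{coro 2.17}, is $\mathcal{U}$-absorbing, so $m$-irreducibility forces $m([u>c])=0$ or $m([u\le c])=0$ for every $c$. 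Since $u$ is finite $m$-a.e.\ (being in $L^p$), the usual argument with $c_0:=\sup\{c:\ m([u\le c])=0\}$ then yields $u=c_0$ $m$-a.e.

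For iii) $\Rightarrow$ ii) the idea is to truncate. Given a $\mathcal{U}$-invariant $u\in L^p(E,m)$, Proposition~\ref{prop 2.15} gives $u^{+}\in\mathcal{A}_p$, and Remark~\ref{rem 2.16}, ii) gives $\inf(u^{+},c)\in\mathcal{A}_p$ for every $c>0$; this function is bounded, so by iii) it agrees $m$-a.e.\ with a constant $d(c)$, the $d(c)$ increase with $c$, and since $\inf(u^{+},c)\nearrow u^{+}$ pointwise I obtain $u^{+}=\sup_c d(c)$ $m$-a.e., a constant which is finite because $u^{+}\in L^p$. The same applied to $u^{-}$ gives $u=u^{+}-u^{-}$ constant $m$-a.e. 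When $m(E)<\infty$ every bounded function lies in every $L^p(E,m)$, so ii) $\Rightarrow$ iii) is immediate and thus ii) $\Leftrightarrow$ iii). If, in addition, $\mathcal{U}$ is $m$-symmetric, Proposition~\ref{prop 2.20} identifies $\mathcal{U}$-absorbing sets with $\mathcal{U}$-invariant sets, so for a $\mathcal{U}$-absorbing $A$ the function $1_A$ is bounded and $\mathcal{U}$-invariant, hence constant $m$-a.e.\ by iii), i.e.\ $m(A)=0$ or $m(E\setminus A)=0$; this gives iii) $\Rightarrow$ i), and together with i) $\Rightarrow$ ii) $\Leftrightarrow$ iii) the three conditions become equivalent.

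In the $m$-recurrent case Proposition~\ref{prop 2.20} again identifies $\mathcal{U}$-absorbing sets with $\mathcal{U}$-invariant sets, so the argument just given yields iii) $\Rightarrow$ i) with no restriction on $m$. For i) $\Rightarrow$ iii) I would take a bounded $\mathcal{U}$-invariant $u$; since $\mathcal{U}$ is $m$-recurrent, Proposition~\ref{prop 2.17} shows $u$ is measurable with respect to the $\sigma$-algebra of $\mathcal{U}$-invariant sets, so each $[u>c]$ is a $\mathcal{U}$-invariant, hence (Corollary~\ref{coro 2.17}) $\mathcal{U}$-absorbing, hence (by $m$-irreducibility) $m$-trivial set, and the level-set argument from the first paragraph makes $u$ constant $m$-a.e. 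Alternatively, $u+\|u\|_\infty$ is nonnegative and $\mathcal{U}$-invariant, so by Remark~\ref{rem 2.14}, ii) it has a $\mathcal{U}$-excessive $m$-version, which is constant by Proposition~\ref{prop 2.9}, iv).

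The hard part is not any single calculation but keeping track of exactly where each hypothesis is needed. In the $m$-symmetric case the implication ii) $\Rightarrow$ i) genuinely fails when $m(E)=\infty$: for the resolvent of two disjoint copies of a symmetric, irreducible, recurrent Markov chain on an infinite countable set with counting measure $m$, the only $L^p$-integrable $\mathcal{U}$-invariant function is $0$ (so ii) holds), yet either copy is a nontrivial $\mathcal{U}$-invariant, hence $\mathcal{U}$-absorbing, set (so i) fails); this forces the clause ``in addition, if $\mathcal{U}$ is $m$-symmetric'' to be read under the standing assumption $m(E)<\infty$. I would also record that the lattice and truncation steps above (Propositions~\ref{prop 2.15} and \ref{prop 2.17}, Remark~\ref{rem 2.16}, ii)) rest only on the defining identity $U_\alpha(uf)=uU_\alpha f$, $f\in bp\mathcal{B}$, so they apply to bounded $\mathcal{U}$-invariant functions exactly as to $L^p$ ones.
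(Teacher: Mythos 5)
Your proof is correct and follows essentially the same route as the paper's: Theorem 2.19 together with triviality of the $\sigma$-algebra of invariant sets for i) $\Rightarrow$ ii), the lattice and truncation facts (Proposition 2.15, Remark 2.16 ii)) for iii) $\Rightarrow$ ii), and Proposition 2.20 combined with Proposition 2.17 (or Proposition 2.9) for the symmetric and recurrent cases. Your additional counterexample showing that the clause ``if $\mathcal{U}$ is $m$-symmetric then i) $\Leftrightarrow$ ii) $\Leftrightarrow$ iii)'' must be read under the standing assumption $m(E)<\infty$ is correct and consistent with the paper's intended reading.
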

\begin{proof}
i) $ \Rightarrow $ ii). If $\mathcal{U}$ is $m$-irreducible then the $\sigma$-algebra of all $\mathcal{U}$-invariant sets is trivial and assertion ii) follows by Theorem \ref{thm 2.19}.

The implication iii) $\Rightarrow$ ii) follows by Proposition \ref{prop 2.15} and Remark \ref{rem 2.16}, i). If $m(E) < \infty$ then the converse is clear. In addition, if $\mathcal{U}$ is $m$-symmetric then by Proposition \ref{prop 2.20} it follows that iii) $\Rightarrow$ i) hence all of the three assertions are equivalent.

Assume now that $\mathcal{U}$ is $m$-recurrent. If iii) holds and $A$ is $\mathcal{U}$-absorbing then by Proposition \ref{prop 2.20} it follows that $1_A$ is $\mathcal{U}$-invariant and therefore it is constant $m$-a.e. Thus $\mathcal{U}$ is $m$-irreducible. Conversely, assume that i) is satisfied. Then there are no non-trivial $\mathcal{U}$-invariant sets and assertion iii) is deduced from Proposition \ref{prop 2.17}.
\end{proof}

\section{Irreducibility and ergodicity of $L^p$-resolvents} 

In this section we study ergodic properties of a sub-Markovian resolvent of kernels under additional hypotheses such as transience and irreducible recurrence. 

Let $m$ be a $\sigma$-finite measure on $(E, \mathcal{B})$. Further we shall use the notation $(\cdot, \cdot)$ to express the duality between $L^p(E, m)$ and $L^{p'}(E, m)$ and $\| \cdot \|_p$ for the $L^p(E, m)$-norm; $p'$ is the exponential conjugate of $p$: $\frac{1}{p} + \frac{1}{p'} = 1$.

We say that a resolvent family $\mathcal{U} = (U_\alpha)_{\alpha > 0}$ of operators on $L^p(E,m)$, $1<p<\infty$ is {\it ergodic} if the strong limit $\lim\limits_{\alpha \to 0}\alpha U_{\alpha}u$ exists for all $u \in L^p(E, m)$. 
 
The next theorem states a convenient version for the present context of the classical result concerning Abel-ergodicity of a pseudo-resolvent family $(U_\alpha)_{\alpha > 0}$ of operators on a locally convex space; cf. \cite{Yo80}, Chapter VIII, Section 4 and \cite{WaEr93}. 
We drop the sub-Markov property for the moment and proceed with the more general condition that allows $\mathcal{U}$ to be uniformly bounded.

\begin{thm} \label{thm 3.1}  
Let $m$ be a $\sigma$-finite measure on $(E, \mathcal{B})$ and $\mathcal{U} = (U_\alpha)_{\alpha > 0}$ be a resolvent family of continuous linear operators on $L^p(E,m)$, $1<p<\infty$ such that $\|\alpha U_\alpha \|_p \leq M$ for all $\alpha > 0$ and for some positive constant $M < \infty$.
Then $\mathcal {U}$ is ergodic. More precisely, for one (hence for all) $\beta > 0$ and all $u \in L^p(E,m)$ there exists $u' \in {\rm Ker}(\mathcal{I} - \beta U_\beta)$ such that 
\[
\mathop{\lim}\limits_{\alpha \to 0} \| \alpha U_{\alpha}u - u'\|_p = 0.
\]
\end{thm}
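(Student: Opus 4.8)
The plan is to reduce the statement to the classical abstract ergodic theorem for pseudo-resolvents on a Banach space, which yields Abel convergence on the direct sum of the kernel and the closure of the range of $\mathcal{I}-\beta U_\beta$, and then to verify that the bounded family $(\alpha U_\alpha)_{\alpha>0}$ actually forces $L^p(E,m)$ to be exactly this direct sum. First I would recall the resolvent identity $U_\alpha-U_\beta=(\beta-\alpha)U_\alpha U_\beta$, which already shows that $\mathrm{Ker}(\mathcal{I}-\beta U_\beta)$ does not depend on $\beta$ (if $\beta U_\beta u=u$, apply $U_\alpha$ to both sides and use the identity to get $\alpha U_\alpha u = u$), and likewise that the range $R:=\overline{\mathrm{Ran}(\mathcal{I}-\beta U_\beta)}$ is independent of $\beta$; write $N:=\mathrm{Ker}(\mathcal{I}-\beta U_\beta)$. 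On $N$ one trivially has $\alpha U_\alpha u = u \to u=:u'$ as $\alpha\to 0$. On $\mathrm{Ran}(\mathcal{I}-\beta U_\beta)$, if $u=v-\beta U_\beta v$ then a direct computation with the resolvent identity gives $\alpha U_\alpha u = \alpha U_\alpha v - \alpha\beta U_\alpha U_\beta v = \alpha U_\alpha v + \tfrac{\alpha\beta}{\beta-\alpha}(U_\beta v - U_\alpha v)$, and using $\|\alpha U_\alpha\|_p\le M$ one checks that $\|\alpha U_\alpha u\|_p \to 0$ as $\alpha\to 0$; by the uniform bound $\|\alpha U_\alpha\|_p\le M$ this extends to all $u\in R$, so $\alpha U_\alpha u\to 0=:u'$ there. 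Hence $\alpha U_\alpha$ converges strongly on the closed subspace $N\oplus R$, with limit the projection onto $N$ along $R$.

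The remaining, and main, point is to show $L^p(E,m)=N\oplus R$, i.e. that the bounded pseudo-resolvent is everywhere Abel-ergodic; equivalently $N\cap R=\{0\}$ and $N+R$ is dense. Here is where I expect the real work. Density of $N+R$: if $N+R$ is not dense, by Hahn–Banach there is a nonzero $\varphi\in L^{p'}(E,m)$ annihilating both $N$ and $\mathrm{Ran}(\mathcal{I}-\beta U_\beta)$; annihilating the range means $\beta U_\beta^{*}\varphi=\varphi$, so $\varphi\in\mathrm{Ker}(\mathcal{I}-\beta U_\beta^{*})$, and by the dual resolvent identity $\alpha U_\alpha^{*}\varphi=\varphi$ for all $\alpha$. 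Since $1<p<\infty$, $L^p$ is reflexive, and the uniform bound together with reflexivity lets me run a weak-compactness / mean-ergodic argument: for $u\in L^p$ the net $\alpha U_\alpha u$ is bounded, hence has weak cluster points, every weak cluster point lies in $N$ (because $\beta U_\beta(\alpha U_\alpha u)-\alpha U_\alpha u = \tfrac{\alpha\beta}{\alpha-\beta}(U_\alpha u - U_\beta u)$, whose norm is $O(\alpha)$), and pairing such a cluster point with $\varphi$ gives $0$ while pairing $\alpha U_\alpha u$ with $\varphi$ gives $\langle u,\varphi\rangle$ for every $\alpha$; this forces $\langle u,\varphi\rangle=0$ for all $u$, i.e. $\varphi=0$, a contradiction. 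The same weak-cluster-point argument, applied to $u\in N\cap R$, shows $N\cap R=\{0\}$: a $u\in N$ satisfies $\alpha U_\alpha u=u$, while $u\in R$ forces $\alpha U_\alpha u\to 0$, so $u=0$.

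Finally I would assemble the pieces: decompose an arbitrary $u\in L^p(E,m)$ as $u=u_N+u_R$ with $u_N\in N$, $u_R\in R$ (using $L^p=N\oplus R$ just established, with the projections bounded since $\alpha U_\alpha$ converges strongly to the $N$-projection), and conclude $\alpha U_\alpha u = u_N+\alpha U_\alpha u_R \to u_N=:u'\in N=\mathrm{Ker}(\mathcal{I}-\beta U_\beta)$, which is the assertion; ergodicity of $\mathcal{U}$ in the sense of the definition is then immediate. The only subtlety to be careful about is the interchange of limits in $\alpha$ and the density closure, but the uniform bound $\|\alpha U_\alpha\|_p\le M$ makes the standard $3\varepsilon$-argument go through. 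I expect the crux of the write-up to be the mean-ergodic/weak-compactness step identifying weak cluster points of $\alpha U_\alpha u$ with elements of $N$ and thereby proving the direct sum decomposition; everything else is the resolvent identity plus routine estimates.
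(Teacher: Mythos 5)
Your proposal is correct and follows essentially the same route as the paper's proof: strong convergence on $\mathrm{Ker}(\mathcal{I}-\beta U_\beta)$ and on $\mathrm{Ran}(\mathcal{I}-\beta U_\beta)$ via the resolvent identity, density of their sum via Hahn--Banach and the adjoint resolvent, weak compactness from reflexivity to place cluster points of $\alpha U_\alpha u$ in the kernel, and extension to all of $L^p$ by the uniform bound. The only cosmetic difference is that the paper identifies the weak limit as lying in the kernel through a Ces\`aro-mean argument, whereas you use the $O(\alpha)$ estimate for $\beta U_\beta(\alpha U_\alpha u)-\alpha U_\alpha u$ directly, which changes nothing essential.
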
 
\begin{proof} Step I. We claim that for every $f \in L^p(E,m)$ there exists $\alpha_n \searrow 0$ such that $(\alpha_n U_{\alpha_n} f)_n$ is weakly convergent to some element from Ker$(\mathcal{I} - \beta U_\beta)$ and, as a consequence, that Ker$(\mathcal{I} - \beta U_\beta)$ separates Ker$(\mathcal{I} - \beta U^\ast_\beta)$, in the sense that if $v \in \mbox{Ker}(\mathcal{I} - \beta U^\ast_\beta)$ and $(u,v) = 0$ for all $u \in {\rm Ker}(\mathcal{I} - \beta U_\beta)$ then $v = 0$, where $ U^\ast_\beta$ is the adjoint operator of $U_\beta$ on $L^{p'}(E,m)$. 
To prove this, let $f \in L^p(E,m)$, $f' \in L^p(E,m)$ and $\alpha_n \downarrow 0$ s.t. $(g_n)_n := (\alpha_n U_{\alpha_n} f)_n$ is weakly convergent to $f'$. 
By passing to a subsequence we may assume that the sequence of Cesaro means $(\dfrac{1}{n}\mathop{\sum}\limits_{k =1}^{n}g_k)_n$ converges strongly to $f'$. 
Then
\[
\beta U_\beta(\dfrac{1}{n}\mathop{\sum}\limits_{k =1}^{n}g_k) = \dfrac{1}{n}\mathop{\sum}\limits_{k=1}^{n}\alpha_k \beta U_\beta U_{\alpha_k}f = \dfrac{1}{n}\mathop{\sum}\limits_{k=1}^{n}\dfrac{\alpha_k \beta}{\beta - \alpha_k}(U_{\alpha_k} f - U_\beta f) \mathop\rightarrow\limits_{n} f'.
\] 
It follows that $f' \in {\rm Ker}(\mathcal{I} - \beta U_\beta)$.

Assume now that $v \in \mbox{Ker}(\mathcal{I} - \alpha U^\ast_\beta)$ and $(u,v) = 0$ for all $u \in {\rm Ker}(\mathcal{I} - \beta U_\beta)$. If $f \in L^p(E, m)$ then by the first part of this proof there exists $f' \in {\rm Ker}(\mathcal{I} - \beta U_\beta)$ and $\alpha_n \downarrow 0$ s.t. $(\alpha_n U_{\alpha_n} f)_n$ is weakly convergent to $f'$. Then
\[
(f,v) = (f, \alpha_n U^\ast_{\alpha_n} v) = (\alpha_n U_{\alpha_n}f, v) \mathop{\rightarrow}\limits_n (f', v) = 0.
\]
Since $f$ was arbitrarily chosen it follows that $v = 0$ and Ker$(\mathcal{I} - \beta U_\beta)$ separates Ker$(\mathcal{I} - \beta U^\ast_\beta)$.

Step II. We show now that $(\alpha U_\alpha u)_{\alpha > 0}$ is strongly convergent for all $u \in L^p(E, m)$. Choose $\beta > 0$ and consider the subspace 
\[
G := \mbox{Ker}(\mathcal{I} - \beta U_\beta) \oplus \left\{ \beta U_\beta f - f : f \in L^p(E, m) \right\} 
\]
of $L^p(E,m)$ and take $v \in L^{p'}(E, m)$ s.t. $(u, v) = 0$ for all $u \in G$. Since $v$ is orthogonal  to each element of the form $\beta U_\beta f - f$, $f \in L^p(E, m)$ we have that $v \in {\rm Ker}(\mathcal{I} - \beta U^\ast_\beta)$ and by Step I it follows that $v = 0$. This means that $G$ is dense in $L^p(E, m)$ and because $(\alpha U_\alpha)_{\alpha > 0}$ is uniformly bounded it is enough to prove that $(\alpha U_\alpha u)_{\alpha > 0}$ is strongly convergent for $u \in G$ and in fact, for elements of the type $\beta U_\beta f - f$, $f \in L^p(E, m)$. By the resolvent equation we have
\[
\|\alpha U_\alpha (\beta U_\beta f - f)\|_p = \alpha \| \alpha U_\alpha U_\beta f - U_\beta f \|_p \leq \alpha \frac{M+M^2}{\beta}\|f\|_p \mathop{\rightarrow}\limits_{\alpha \to 0} 0
\] 
for all $f \in L^p(E, m)$ and Step II is complete.

It is clear now that Step I and Step II prove the theorem.
\end{proof}

\begin{rem}  
i). Recall that if $(T_t)_{t \geq 0}$ is an uniformly bounded strongly continuous semigroup on a reflexive Banach space it holds that $(\dfrac{1}{t}\int\limits_0^t{T(s)ds})_{t \geq 0}$ is ergodic to the projection on the null space of its generator, as $t$ tends to infinity. 
This property is known as {\it mean ergodicity} and its proof, for which we refer to \cite{EnNa99}, Chapter V, Theorem 4.5 and Example 4.7, follows the same lines as the one of Theorem \ref{thm 3.1}. 
We emphasize that if a strongly continuous semigroup is uniformly bounded then so is its corresponding resolvent, but the converse is not true in general, so the assumption on the boundedness of the resolvent is weaker. 
In the same time, it is natural that the resolvent gives more information about the semigroup rather than its integral means.

ii). If $(T_t)_{t\geq 0}$ is the transition function of a right Markov process on $(E,\mathcal{B})$ which is $m$-recurrent, then by \cite{Fi98}, Theorem 1.1, the following {\it quasi-sure} form of Theorem \ref{thm 3.1} holds:
let $\Sigma$ be the $\sigma$-algebra of all $m$-invariant sets and set $\mu := q \cdot m$ with $q>0$ and $m(q)=1$.  
Then, for every measurable functions $f$ and $g \geq 0$ from $L^1(m)$, there exists an $m$-polar set $B\in \mathcal{B}$ such that
$$
\lim\limits_{t \to \infty}\frac{\int\limits_0^tT_s f(x) ds}{\int\limits_0^tT_s g(x) ds} = \frac{\mu(f\slash q \mid \Sigma)}{\mu(g\slash q \mid \Sigma)}
$$
for all $x \in [Ug>0] \setminus B$;
for the corresponding statement in terms of resolvents see Theorem 6.1 from \cite{Fi98}.
This result is a generalization of its $m$-semipolar version proved in \cite{Fu74}, Theorem 3.1 for standard Markov processes in duality (see also \cite{Sh76} and \cite{Sh77}),
and, as a matter of fact, it is the quasi-sure refinement in continuous time of the well-known ergodic result of Chacon and Ornstein, \cite{ChOr60}. 
\end{rem}

In view of Theorem \ref{thm 2.19} we would like to give a better insight for Theorem \ref{thm 3.1} in the situation that $\mathcal{U}$ is a sub-Markovian resolvent of kernels on $(E, \mathcal{B})$ such that $\mathcal{E}(\mathcal{U}_\beta)$ is min-stable, contains the positive constant functions and generates $\mathcal{B}$, and $m$ a $\sigma$-finite sub-invariant measure, or equivalently, according to the discussion in the beginning of Section 2, that $\mathcal{U}$ (and hence $\mathcal{U}^\ast$) is a strongly continuous sub-Markovian resolvents of contractive operators on $L^p(E, m)$, $1 < p < \infty$. 
For every $1 < p < \infty$ we denote by $({\sf L}_p, D({\sf L}_p))$ the generator associated to $\mathcal{U}$ as a strongly continuous resolvent of operators on $L^p(E,m)$:
\[
D({\sf L}_p) = U_{\alpha}(L^p(E, m)), \; \alpha >0, \; {\sf L}_p(U_{\alpha}f) = \alpha U_{\alpha} f - f, \; f\in L^p(E, m).
\]  
The corresponding generator associated to $\mathcal{U}^\ast$ is denoted by $({\sf L}^{\ast}_{p'}, D({\sf L}^{\ast}))$.
We point out that the adjoint operator of ${\sf L}_p$ is ${\sf L}^\ast_{p'}$ and not ${\sf L}^\ast_{p}$.

The following corollary is a direct consequence of Theorem \ref{thm 2.19}.

\begin{coro} \label{prop 3} 
Let $1 \leq p < \infty$. Then the following assertions hold.

\vspace{0.2cm}

i) {\rm Ker}${\sf L}_p$ = {\rm Ker}${\sf L}^\ast_p$.

\vspace{0.2cm}

ii) {\rm Ker}${\sf L}_p \cap L^{p'}(E, m) \subset {\rm Ker}{\sf L}^\ast_{p'}$.

\vspace{0.2cm} 

iii) If $u \in {\rm Ker}{\sf L}_p$ then $u$ is $\mathcal{U}$-invariant. 

\vspace{0.2cm}

iv) If $\alpha U_\alpha 1 = 1$ or $\alpha U^\ast_\alpha 1 = 1$ $m$-a.e. then the converse of iii) is also true for any $u \in L^p(E, m)$.
\end{coro}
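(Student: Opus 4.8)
The plan is to derive Corollary \ref{prop 3} directly from Theorem \ref{thm 2.19}, using only the dictionary between the generator kernels and the set $\mathcal{I}_p$. First I would observe that, by the very definition of the generator $({\sf L}_p, D({\sf L}_p))$, we have $u \in {\rm Ker}\,{\sf L}_p$ if and only if $u \in L^p(E,m)$ and $\alpha U_\alpha u = u$ $m$-a.e.\ for all $\alpha > 0$; indeed, writing $u = U_\alpha f$ with $f \in L^p$, the condition ${\sf L}_p u = 0$ reads $\alpha U_\alpha u = u$, and the resolvent equation propagates this to all $\alpha$. Hence ${\rm Ker}\,{\sf L}_p = \mathcal{I}_p$ (in the notation of the excerpt), and likewise ${\rm Ker}\,{\sf L}^\ast_p = \mathcal{I}^\ast_p$, the analogous set for $\mathcal{U}^\ast$. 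With this identification the corollary becomes a straight translation of Theorem \ref{thm 2.19}.

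For part i), the equivalence i) $\Leftrightarrow$ ii) of Theorem \ref{thm 2.19} says precisely that for $u \in L^p(E,m)$ one has $\alpha U_\alpha u = u$ $m$-a.e.\ for all $\alpha$ if and only if $\alpha U^\ast_\alpha u = u$ $m$-a.e.\ for all $\alpha$; that is $\mathcal{I}_p = \mathcal{I}^\ast_p$, i.e.\ ${\rm Ker}\,{\sf L}_p = {\rm Ker}\,{\sf L}^\ast_p$. (One small point: ${\sf L}^\ast_p$ here denotes the generator of $\mathcal{U}^\ast$ viewed on $L^p$, not the Banach-space adjoint of ${\sf L}_p$, which is ${\sf L}^\ast_{p'}$; this is exactly the distinction the paragraph preceding the corollary flags.) For part ii), if $u \in {\rm Ker}\,{\sf L}_p \cap L^{p'}(E,m)$ then $u$ is an element of $L^{p'}(E,m)$ satisfying $\alpha U_\alpha u = u$ $m$-a.e.\ for all $\alpha$, so applying Theorem \ref{thm 2.19} with exponent $p'$ in place of $p$ gives $\alpha U^\ast_\alpha u = u$ $m$-a.e., i.e.\ $u \in {\rm Ker}\,{\sf L}^\ast_{p'}$. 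For part iii), the implication i) $\Rightarrow$ iii) of Theorem \ref{thm 2.19} states that every $u \in \mathcal{I}_p$ is $\mathcal{U}$-invariant, which is exactly the claim. For part iv), under the extra hypothesis $\alpha U_\alpha 1 = 1$ or $\alpha U^\ast_\alpha 1 = 1$ $m$-a.e., the last line of Theorem \ref{thm 2.19} asserts that assertions i)--v) are all equivalent, so in particular $\mathcal{U}$-invariance of $u$ forces $u \in \mathcal{I}_p = {\rm Ker}\,{\sf L}_p$, giving the converse of iii).

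The only genuinely non-routine point is verifying the identification ${\rm Ker}\,{\sf L}_p = \mathcal{I}_p$ cleanly, since ${\sf L}_p$ is defined on the range $U_\alpha(L^p(E,m))$ and one must check both that this identification does not depend on the choice of $\alpha$ used to represent elements of the domain and that ``$\alpha U_\alpha u = u$ for one $\alpha$'' already places $u$ in $D({\sf L}_p)$ with ${\sf L}_p u = 0$; but this is precisely the content of the ``for one (and therefore for all) $\alpha$'' clause in condition i) of Theorem \ref{thm 2.19}, whose proof is already given there, so there is no real obstacle. I would also remark that the case $p = 1$ is included in the statement even though Theorem \ref{thm 3.1} (and the strong-continuity discussion) was phrased for $1 < p < \infty$; this is harmless because parts i)--iv) only invoke Theorem \ref{thm 2.19}, which is valid for $1 \le p < \infty$, together with the purely algebraic description of the generator. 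Thus the proof is essentially a one-line appeal to Theorem \ref{thm 2.19} for each item, once the kernel-of-generator bookkeeping is in place.
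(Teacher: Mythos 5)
Your proposal is correct and matches the paper's approach: the paper offers no separate argument, stating only that the corollary is a direct consequence of Theorem \ref{thm 2.19}, and your identification ${\rm Ker}\,{\sf L}_p=\mathcal{I}_p$ plus the i) $\Leftrightarrow$ ii) $\Rightarrow$ iii) equivalences (and the final clause for iv)) is exactly the intended bookkeeping. One small repair for part ii) when $p=1$ (so $p'=\infty$): instead of invoking Theorem \ref{thm 2.19} at exponent $p'$, which for $p'=\infty$ is only stated under $m(E)<\infty$, apply the equivalence i) $\Leftrightarrow$ ii) at exponent $p$ itself to get $\alpha U^\ast_\alpha u=u$ $m$-a.e., and then use $u\in L^{p'}(E,m)$ together with the algebraic description of the generator to conclude $u\in{\rm Ker}\,{\sf L}^\ast_{p'}$; the conclusion is unchanged.
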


In potential theoretical terms, Corollary \ref{prop 3}, i), states that the harmonic and coharmonic functions belonging to $L^{p}$ coincide. 
In combination with Theorem \ref{thm 3.1}, it means that for functions $u \in L^p(E,m)$, the limit of $\alpha U_\alpha u$, $\alpha \searrow 0$, produces both harmonic and coharmonic functions.\\

From now on we consider the same framework as in Section 2, that is $\mathcal{U}=(U_\alpha)_{\alpha > 0}$ is a sub-Markovian resolvent of kernels on $(E, \mathcal{B})$
and $m$ is a $\sigma$-finite sub-invariant measure with respect to $\mathcal{U}$. In particular, $\mathcal{U}$ becomes a resolvent family of contractive operators on $L^p(E,m)$ for all $1 < p < \infty$, hence $\mathcal{U}$ is ergodic in the sense of Theorem \ref{thm 3.1}. 

In the next two propositions we exploit the ergodic property of $\mathcal{U}$, involving the additional properties of $m$-transience and $m$-irreducible recurrence, respectively. 

\begin{prop} \label{prop 3.4} 
Consider the following assertions.

\vspace{0.2cm}

i) $\mathcal{U}$ is $m$-transient.

\vspace{0.2cm}

ii) If $u \in L^p$, $1 < p < \infty$, and $\alpha U_\alpha u = u$ then $u = 0$ $m$-a.e.

\vspace{0.2cm}

iii) For all $u \in L^p(E, m)$, $1 < p < \infty$ we have
\[
\lim\limits_{\alpha \to 0} \| \alpha U_\alpha u \|_p  = 0.
\]

Then i) $\Rightarrow$ ii) $\Leftrightarrow$ iii).

If $m(E) < \infty$ and $\mathcal{U}$ is $m$-irreducible then i), ii), and iii) are equivalent.
\end{prop}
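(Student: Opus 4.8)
The plan is to first establish the equivalence ii) $\Leftrightarrow$ iii), which is essentially a restatement of the ergodic theorem, and then prove i) $\Rightarrow$ ii), which is where the transience hypothesis does its work. For ii) $\Leftrightarrow$ iii), the key observation is Theorem \ref{thm 3.1}: for any $u \in L^p(E,m)$, $1<p<\infty$, there exists $u' \in \mathrm{Ker}(\mathcal{I}-\beta U_\beta)$ with $\alpha U_\alpha u \to u'$ in $L^p$ as $\alpha \searrow 0$. If ii) holds, then $\mathrm{Ker}(\mathcal{I}-\beta U_\beta) = \{0\}$ (note that $\alpha U_\alpha u = u$ for one $\alpha$ is equivalent to holding for all $\alpha$, by Theorem \ref{thm 2.19}, so membership in this kernel forces $u=0$), hence $u'=0$ and iii) follows. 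Conversely, if iii) holds and $\alpha U_\alpha u = u$ for some $u \in L^p$, then $\|u\|_p = \|\alpha U_\alpha u\|_p \to 0$, so $u = 0$ $m$-a.e.; this gives ii).

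For the implication i) $\Rightarrow$ ii), suppose $\mathcal{U}$ is $m$-transient and let $u \in L^p(E,m)$, $1<p<\infty$, satisfy $\alpha U_\alpha u = u$ $m$-a.e. for all $\alpha>0$. By Theorem \ref{thm 2.19}, $\mathcal{I}_p$ is a vector lattice, so $|u| \in \mathcal{I}_p$ and it suffices to treat $u \geq 0$. By Proposition \ref{prop 2.2} applied to $u$ (which satisfies $\alpha U_\alpha u \leq u$ $m$-a.e.), there is a $\mathcal{U}$-excessive function $u'$ with $u' = u$ $m$-a.e.; moreover $\alpha U_\alpha u' = u'$ $m$-a.e. for all $\alpha$. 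Now invoke $m$-transience via Proposition \ref{prop2.1}(iii): choose $f_0 \in p\mathcal{B}\cap L^1(E,m)$ with $0 < f_0 \leq 1$ $m$-a.e. and $Uf_0$ bounded. The idea is that $\alpha U_\alpha u = u$ forces $u$ to be "$U$-invariant" in a way incompatible with the existence of an integrable reference-like function unless $u=0$. Concretely, from $u = \alpha U_\alpha u = \beta U_\beta u + (\alpha-\beta)\cdots$ — better, use that $u' \in \mathcal{E}(\mathcal{U})$ and $\alpha U_\alpha u' = u'$ implies (via the excessive regularization / the formula $U_\alpha u' = \frac{1}{\alpha}(u' - \alpha U_\alpha u' + \cdots)$) that $u'$ behaves like a potential only if it vanishes. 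The cleanest route: for $f \in p\mathcal{B}\cap L^1(E,m)$, consider $\int (Uf)\, u\, dm = \int f\, U^\ast u\, dm$; since transience passes to $\mathcal{U}^\ast$ (Proposition \ref{prop2.1}), and since $u \in \mathcal{I}_p \Leftrightarrow u \in \mathcal{I}_p^\ast$ would need Theorem \ref{thm 2.19}(ii)... Instead, here is the decisive step: by the complete maximum principle and $Uf_0$ bounded, $Uf_0 \leq C$ on $E$; multiply the invariance relation and integrate against $f_0$. From $\alpha U_\alpha u = u$ we get $\int u\, f_0\, dm = \int \alpha U_\alpha u \cdot f_0\, dm = \int u\cdot \alpha U_\alpha^\ast f_0\, dm$ for all $\alpha$; letting $\alpha \searrow 0$ and using that $\alpha U_\alpha^\ast f_0 \searrow 0$ because $U^\ast f_0 < \infty$ $m$-a.e. (again transience of $\mathcal{U}^\ast$, via dominated convergence justified by $\alpha U_\alpha^\ast f_0 \leq f_0 \leq 1$ and $\alpha U_\alpha^\ast f_0 \to 0$ pointwise $m$-a.e. since $U^\ast f_0 < \infty$), we obtain $\int u f_0\, dm = 0$, and since $f_0 > 0$ $m$-a.e. and $u \geq 0$, this forces $u = 0$ $m$-a.e.

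For the final assertion, assume $m(E)<\infty$ and $\mathcal{U}$ is $m$-irreducible; we must show iii) $\Rightarrow$ i), i.e. ii) $\Rightarrow$ i) suffices. Since $m(E) < \infty$, the constant function $1 \in L^p$, so if $\mathcal{U}$ were $m$-recurrent then by Proposition \ref{prop2.3}(v$'$), or by Corollary \ref{coro 2.4}, we would not directly get $\alpha U_\alpha 1 = 1$ — but by Proposition \ref{prop 2.5} the $m$-irreducible resolvent is either $m$-transient or $m$-recurrent, so it suffices to rule out recurrence under ii). If $\mathcal{U}$ is $m$-recurrent, then by Proposition \ref{prop 2.9}(iv), $\alpha U_\alpha 1 = 1$ $m$-a.e., so $1 \in \mathcal{I}_p$ and $1 \neq 0$, contradicting ii). Hence $\mathcal{U}$ is $m$-transient. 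The main obstacle I anticipate is justifying the passage to the limit $\alpha U_\alpha^\ast f_0 \to 0$ cleanly in the i) $\Rightarrow$ ii) step — one must be slightly careful that $\alpha U_\alpha^\ast f_0$ decreases (it need not, in general, for a resolvent that is not excessive-regular at $f_0$), so it may be more robust to run the argument through the excessive version $u'$ together with the potential $U(\text{something})$ and the complete maximum principle, rather than via $\mathcal{U}^\ast$; alternatively, pair $u$ with the $\mathcal{U}$-excessive potential $Uf_0 \leq 1$ directly and use $\alpha U_\alpha(Uf_0) = Uf_0 - U_\alpha f_0 \to Uf_0$, together with $\int u\, U_\alpha f_0\, dm = \int U_\alpha u \cdot f_0\, dm = \frac{1}{\alpha}\int(u - \alpha U_\alpha u)\cdot\frac{f_0}{1}\,\cdots$, which vanishes since $\alpha U_\alpha u = u$; this shows $\int u\, U f_0\, dm = \int u\, (Uf_0 - U_\alpha f_0)\, dm$ is independent of $\alpha$ and equals $\lim_\alpha \int \alpha U_\alpha u\cdot Uf_0\, dm$... the bookkeeping here is the only delicate point, but it is routine once one fixes the right pairing.
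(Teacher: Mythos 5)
Your treatment of ii) $\Leftrightarrow$ iii) via Theorem \ref{thm 3.1}, and of the final assertion (ruling out recurrence via $\alpha U_\alpha 1=1$ and invoking the dichotomy of Proposition \ref{prop 2.5}), is correct and essentially the paper's argument. The problem is the step i) $\Rightarrow$ ii), which is exactly where you yourself signal trouble and then leave it unresolved. As written, the limiting argument rests on two unjustified claims. First, the domination $\alpha U_\alpha^\ast f_0\le f_0$ is false in general: $f_0$ was chosen so that $Uf_0$ is bounded, but $f_0$ has no reason to be $\mathcal{U}^\ast$-supermedian; all you really have is $\alpha U_\alpha^\ast f_0\le 1$. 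Second, even granting that bound, dominated convergence for $\int u\,\alpha U_\alpha^\ast f_0\,dm\to 0$ needs a dominating \emph{integrable} function, and the natural dominant $u\cdot 1=u$ is only in $L^p$; when $m(E)=\infty$ (the case that carries the content of the proposition, since for $m(E)<\infty$ one has $L^p\subset L^1$) the passage to the limit is not justified. Your fallback pairing through $U_\alpha f_0$ has the same integrability problem ($Uf_0$ bounded does not put it in $L^{p'}$ for infinite $m$) and is left as an ellipsis, so the proposal does not close the gap.

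The gap is repairable along either of two lines. One is to move the transience test function to the dual side: since $\mathcal{U}^\ast$ is $m$-transient (Proposition \ref{prop2.1}), choose $g_0\in p\mathcal{B}\cap L^1(E,m)$ with $0<g_0\le 1$ $m$-a.e. and $U^\ast g_0\le C$; then $(\alpha U_\alpha^\ast g_0)^{p'}\le(\alpha C)^{p'-1}\alpha U_\alpha^\ast g_0$, hence $\|\alpha U_\alpha^\ast g_0\|_{p'}^{p'}\le(\alpha C)^{p'-1}\|g_0\|_1\to 0$, and H\"older gives $\int u g_0\,dm=\int u\,\alpha U_\alpha^\ast g_0\,dm\to 0$, so $u=0$ $m$-a.e. for $u\ge 0$ in $\mathcal{I}_p$ (no dominated convergence needed). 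The other is the paper's route, which proves i) $\Rightarrow$ iii) directly: for positive $u\in L^1\cap L^p$ one has $\alpha U_\alpha u\le\alpha Uu\to 0$ $m$-a.e. by transience (Proposition \ref{prop2.1} iv)), which identifies the ergodic limit from Theorem \ref{thm 3.1} as $0$, and the general case follows from the uniform contractivity of $(\alpha U_\alpha)$ together with the density of $L^1\cap L^p$ in $L^p$. Either repair completes your outline; without one of them the implication i) $\Rightarrow$ ii) is not proved.
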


\begin{proof}
i) $\Rightarrow$ iii). We may assume that $u$ is positive. 
If $u \in p\mathcal{B} \cap L^p(E, m) \cap L^1(E, m)$ then by Theorem \ref{thm 3.1} there exists $ u' \in L^p(E, m)$ such that $\alpha U_\alpha u' = u'$ and $\lim\limits_{\alpha \to 0} \| \alpha U_\alpha u - u' \|_p = 0 $. 
By Proposition \ref{prop2.1} we have $\alpha U_\alpha u \leq \alpha U u \mathop{\longrightarrow}\limits_{\alpha \to 0} 0$ $m$-a.e.,
therefore $u' = 0$ $m$-a.e. and $\lim\limits_{\alpha \to 0} \alpha U_\alpha u = 0$ in $ L^p(E, m) $. 
Let now $ u, u' \in p\mathcal{B} \cap L^p(E, m) $ and $(u_n)_n \subset p\mathcal{B} \cap L^p(E, m) \cap L^1(E, m)$ such that $\lim\limits_{n \to \infty} \| u_n - u \|_p = 0$ and $\lim\limits_{\alpha \to 0} \|\alpha U_\alpha u - u' \|_p = 0$ (cf. Theorem \ref{thm 3.1}). 
Then
\[
\|u'\|_p = \lim\limits_{\alpha \to 0} \| \alpha U_\alpha u \|_p \leq \lim\limits_{\alpha \to 0} ( \| \alpha U_\alpha (u - u_n) \|_p + \| \alpha U_\alpha u_n \|_p) \leq 
\]
\[
\leq \| u - u_n \|_p + \lim\limits_{\alpha \to 0} \| \alpha U_\alpha u_n \|_p = \| u - u_n \|_p \mathop{\longrightarrow}\limits_{n} 0
\]
hence $u' = 0$ $m$-a.e.

iii) $\Rightarrow$ ii). If $u \in L^p(E,m)$ such that $\alpha U_\alpha u = u$ then by iii) we have $u = 0$ $m$-a.e.

ii) $\Rightarrow$ iii). If $u \in  L^p(E, m)$ by Theorem \ref{thm 3.1} there exists $u' \in L^p(E,m)$ such that $\alpha U_\alpha u' = 0$ and $\lim\limits_{\alpha \to 0} \| \alpha U_\alpha u - u' \|_p = 0$. By ii) we have $u' = 0$ $m$-a.e.

Assume now that $m(E) < \infty$ and $\mathcal{U}$ is $m$-irreducible. 
Since $1 \in L^p(E,m)$, if iii) holds then we have that $\alpha U_\alpha 1 $ converges pointwise to $0$ as $\alpha$ goes to $0$ hence $\mathcal{U}$ is not $m$-recurrent. 
By Proposition \ref{prop 2.5} it follows that $\mathcal{U}$ is $m$-transient.
\end{proof}

\begin{prop} \label{prop 3.5} 
Consider the following conditions.

i) $\mathcal{U}$ is $m$-irreducible and $m$-recurrent.

\vspace{0.1cm}

ii) If $u \in L^p(E,m)$ and $\alpha U_\alpha u = u$ then $u$ is constant.

\vspace{0.1cm}

iii) For all $u \in L^p(E, m) $ we have
\[
\mathop{\lim}\limits_{\alpha \to 0}\| \alpha U_{\alpha}u - c_u \|_p = 0,
\]
where $c_u$ is the constant defined by
\[
c_u := \left\{\begin{array}{ll}
\dfrac{\int u dm}{m(E)} &, \; {\rm if} \; m(E) < \infty \\
0 &, \; {\rm if} \; m(E) = +\infty.
\end{array}
\right.
\]

\vspace{0.1cm}

Then i) $\Rightarrow$ iii) $\Rightarrow$ ii).

If $m(E) = \infty$ then ii) $\Leftrightarrow$ iii).

If $m(E) < \infty$ then i) $\Leftrightarrow$ iii). If in addition $\mathcal{U}$ is $m$-recurrent then i), ii), and iii) are equivalent.
\end{prop}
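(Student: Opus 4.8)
The plan is to read every implication off the ergodic splitting furnished by Theorem~\ref{thm 3.1}, combined with the invariance dictionary of Theorem~\ref{thm 2.19} and Corollary~\ref{coro 2.21} and with the recurrence criteria of Section~2 (Proposition~\ref{prop2.3}, Corollary~\ref{coro 2.4}, Proposition~\ref{prop 2.20}). Since $\mathcal{U}$ is sub-Markovian we have $\|\alpha U_\alpha\|_p \le 1$, so Theorem~\ref{thm 3.1} applies on $L^p(E,m)$, $1<p<\infty$: for every $u$ there is $u' \in \mathcal{I}_p$ with $\alpha U_\alpha u \to u'$ in $L^p$ as $\alpha \to 0$. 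Everything then hinges on identifying this limit.

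For \emph{i)\,$\Rightarrow$\,iii)} I would argue that $u'$, being in $\mathcal{I}_p$, is $\mathcal{U}$-invariant by Theorem~\ref{thm 2.19} (the implication (i)\,$\Rightarrow$\,(iii)), hence constant by Corollary~\ref{coro 2.21} since $\mathcal{U}$ is $m$-irreducible. If $m(E)=\infty$ a constant lying in $L^p$ must vanish, so $u'=0=c_u$. If $m(E)<\infty$, then $1 \in L^{p'}(E,m)$, and recurrence of $\mathcal{U}$ gives $\alpha U_\alpha^\ast 1 = 1$ $m$-a.e.\ by Corollary~\ref{coro 2.4}; hence $\int \alpha U_\alpha u\,dm = \int u\,\alpha U_\alpha^\ast 1\,dm = \int u\,dm$ for all $\alpha$, and letting $\alpha\to 0$ (the functional $f\mapsto\int f\,dm$ being $L^p$-continuous) identifies the constant as $u' = \int u\,dm / m(E) = c_u$. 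The implication \emph{iii)\,$\Rightarrow$\,ii)} is immediate: if $\alpha U_\alpha u = u$ for all $\alpha>0$ then $\alpha U_\alpha u \to u$, so $u = c_u$ is constant. When $m(E)=\infty$ the converse \emph{ii)\,$\Rightarrow$\,iii)} is the same argument run backwards: the limit $u'\in\mathcal{I}_p$ from Theorem~\ref{thm 3.1} is constant by ii), hence $u'=0=c_u$.

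The substantial case is \emph{iii)\,$\Rightarrow$\,i) when $m(E)<\infty$}. Applying iii) to $u=1$ (for which $c_1=1$) shows $h_\alpha := 1-\alpha U_\alpha 1 \to 0$ in $L^p$; I would then upgrade this to the pointwise identity $\beta U_\beta 1 = 1$ by means of the resolvent equation, which gives, for $0<\alpha<\beta$,
\[
(\beta-\alpha)\,U_\beta h_\alpha = \beta U_\beta 1 - \alpha U_\alpha 1 .
\]
The left-hand side tends to $0$ in $L^p$ (since $\|U_\beta\|_p \le 1/\beta$), while the right-hand side tends to $\beta U_\beta 1 - 1$; thus $\beta U_\beta 1 = 1$ $m$-a.e.\ for every $\beta>0$, so $\mathcal{U}$ is $m$-recurrent by Corollary~\ref{coro 2.4}. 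For irreducibility, let $A\in\mathcal{B}$ be $\mathcal{U}$-absorbing; by Proposition~\ref{prop 2.20} the function $1_A$ is $\mathcal{U}$-invariant, hence $\alpha U_\alpha 1_A = 1_A\,\alpha U_\alpha 1 = 1_A$ with $1_A\in L^p(E,m)$, and the already proven iii)\,$\Rightarrow$\,ii) forces $1_A$ to be constant, i.e.\ $m(A)=0$ or $m(E\setminus A)=0$. This yields i)\,$\Leftrightarrow$\,iii) for $m(E)<\infty$. Finally, when $m(E)<\infty$ and $\mathcal{U}$ is moreover $m$-recurrent, it remains only to check ii)\,$\Rightarrow$\,i): recurrence gives $\alpha U_\alpha 1 = 1$, so for any $\mathcal{U}$-absorbing $A$ again $\alpha U_\alpha 1_A = 1_A$ with $1_A\in L^p$, and ii) makes $1_A$ constant; thus $\mathcal{U}$ is $m$-irreducible, and with recurrence this is i).

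I expect the only genuinely delicate point to be the passage from $L^p$-convergence to the pointwise equality $\beta U_\beta 1 = 1$ in the finite-measure case; the resolvent-equation identity above is what makes it go through, and the remaining implications are a routine combination of Theorems~\ref{thm 3.1} and~\ref{thm 2.19} with the Section~2 recurrence criteria.
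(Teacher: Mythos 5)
Your argument is correct, and its first half (the splitting from Theorem~\ref{thm 3.1}, identifying the limit $u'$ as a constant via Theorem~\ref{thm 2.19} and Corollary~\ref{coro 2.21}, pinning its value through $\alpha U_\alpha^\ast 1=1$ when $m(E)<\infty$, and the implications iii)\,$\Rightarrow$\,ii) and ii)\,$\Rightarrow$\,iii) for $m(E)=\infty$) is essentially the paper's proof. Where you genuinely diverge is the converse iii)\,$\Rightarrow$\,i) (and ii)\,$\Rightarrow$\,i)) in the finite-measure case: the paper tests condition iii) on \emph{bounded excessive functions} $u$, deduces from $\int u\,dm \ge \int \alpha U_\alpha u\,dm \to \int u\,dm$ that $\alpha U_\alpha u=u$ and $u=c_u$ $m$-a.e., and then concludes irreducible recurrence in one stroke from Proposition~\ref{prop 2.9} (iv)\,$\Rightarrow$\,i); you instead apply iii) only to $u\equiv 1$ and use the resolvent-equation identity $(\beta-\alpha)U_\beta(1-\alpha U_\alpha 1)=\beta U_\beta 1-\alpha U_\alpha 1$ to upgrade $L^p$-convergence to the exact equality $\beta U_\beta 1=1$, hence recurrence by Corollary~\ref{coro 2.4}, and then obtain irreducibility from absorbing sets via Proposition~\ref{prop 2.20} together with the already-proved iii)\,$\Rightarrow$\,ii); the same mechanism handles ii)\,$\Rightarrow$\,i) under the extra recurrence hypothesis, where the paper again argues through bounded excessive functions and Proposition~\ref{prop 2.9}. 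Both routes are sound; the paper's is shorter on citations but leans on excessive-function machinery (implicitly the monotonicity of $\alpha\mapsto\alpha U_\alpha u$ for excessive $u$, plus the $\alpha U_\alpha 1=1$ part of Proposition~\ref{prop 2.9}\,iv)), whereas yours replaces that by a direct operator identity — your ``delicate point'' of passing from $L^p$-convergence to the pointwise identity $\beta U_\beta 1=1$ is exactly what Proposition~\ref{prop 2.9} absorbs in the paper's version — at the cost of invoking Corollary~\ref{coro 2.4} and Proposition~\ref{prop 2.20} instead.
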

\begin{proof}
i) $\Rightarrow$ iii). Let $u \in L^p(E, m)$. 
From Theorem \ref{thm 3.1} there exists $ u' \in L^p(E,m)$ such that $\alpha U_\alpha u' = u'$ and $\lim\limits_{\alpha \to 0} \| \alpha U_\alpha u - u' \|_p = 0 $. Furthermore, by Proposition \ref{prop 2.9} we have that $u'$ is constant. 
Clearly $u' = 0$ if $m(E) = \infty$. 
If $m(E) < \infty$ then $\int u' dm = \mathop{\lim}\limits_{\alpha \to 0}(\alpha U_{\alpha}u, 1) = \mathop{\lim}\limits_{\alpha \to 0}(u, \alpha U_{\alpha}^{\ast}1) = \int u dm$ and so $u' = \dfrac{\int u dm}{m(E)}$.

The implication iii) $\Rightarrow$ ii) is clear.

If $m(E) = \infty$ and ii) holds then for $u \in L^p(E, m)$ and $u'$ provided by Theorem \ref{thm 3.1} we have that $u'$ is constant and therefore $u' = 0$ and iii) is satisfied.

Assume now that $m(E) < \infty$. 
Under assertion iii), if $u$ is a bounded excessive function then $\int u dm \geq \int \alpha U_\alpha u dm \mathop\to\limits_{\alpha \to 0} \int u dm$, 
hence $\alpha U_\alpha u = u$ $m$-a.e. and in fact $u = c_u$ $m$-a.e. 
By Proposition \ref{prop 2.9} we get that $\mathcal{U}$ is $m$-irreducible recurrent.
If in addition $\mathcal{U}$ is $m$-recurrent and ii) holds, it follows once again that every bounded excessive function is constant and by Proposition \ref{prop 2.9} we conclude that $\mathcal{U}$ is $m$-irreducible and assertions i), ii), and iii) are equivalent.  
\end{proof}

\begin{rem} \label{rem 3.6} 
a) If $m(E) < \infty$ then the $L^p$-ergodicity in the assertions iii) of Proposition \ref{prop 3.4} and respectively of Proposition \ref{prop 3.5} for $p > 1$ implies also the $L^1$-ergodicity. 
This follows easily by the density of $L^1(E, m) \cap L^p(E, m)$ in $L^1(E, m)$.

b) If $\mathcal{U}$ is the resolvent of an $m$-symmetric right process $X$, then by \cite{FuOsTa11}, Theorem 4.7.3, if $\mathcal{U}$ is $m$-irreducible and $m$-recurrent then for all Borel measurable and $m$-integrable functions $u$ we have $P_m$-a.s. and $P_x$-a.s. for q.e. $x \in E$ that
\[
\lim\limits_{t \to \infty}\frac{1}{t}\int\limits_0^t u(X_s)ds = c_u.
\]
This pathwise ergodicity is entailed by a corresponding ergodicity in terms of shift invariance for which we refer to Theorem 4.7.2.
\end{rem}

\section{Extremality of invariant measures} As in the previous sections, we assume that $\mathcal{U} = (U_\alpha)_{\alpha > 0}$ is merely a sub-Markovian resolvent of kernels on $(E, \mathcal{B})$. 
Let $\mathcal{I}$ be the set of all $\mathcal{U}$-invariant probability measures, i.e. 
$$
\mathcal{I} : = \{\mu : \mu  \; \mbox{ is a}  \; \mbox{ probability measure such that } \; \mu \circ \alpha U_\alpha = \mu, \; \alpha > 0 \}. 
$$
We fix an $\mathcal{U}$-sub-invariant probability measure $m$ and denote by $\mathcal{I}_{m, ac}$ the subset of $\mathcal{I}$ which consists of all absolutely continuous measures with respect to $m$. 
Also, let $\mathcal{U}^\ast$ be the adjoint resolvent of $\mathcal{U}$ with respect to $m$ (cf. (2.1)).

\begin{lem} \label{lem 2.24} 
The following assertions are equivalent for a probability measure $\mu$.

i) $\mu \in \mathcal{I}_{m,  ac}$.

\vspace{0.1cm}

ii) There exists a function $u \in p\mathcal{B}$ such that $\alpha U_\alpha u = u$ $m$-a.e. for all $\alpha > 0$, $\mu = u \cdot m$, and $m(u) = 1$. 
In particular, the function $u$ is $\mathcal{U}$-invariant  (or equivalently,  $\mathcal{U}^\ast$-invariant).
\end{lem}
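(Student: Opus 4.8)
The plan is to prove the two implications separately; the main ingredients are the Radon--Nikodym theorem, the weak duality relation (\ref{eq 2.1}), the sub-Markov property $\alpha U_\alpha 1\le 1$, and Theorem \ref{thm 2.19}, which will let us pass between the invariance condition on $u$ written for $\mathcal{U}$ and the one written for $\mathcal{U}^\ast$.

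\emph{From ii) to i).} Assume $u\in p\mathcal{B}$ with $\alpha U_\alpha u=u$ $m$-a.e.\ for all $\alpha>0$, $\mu=u\cdot m$ and $m(u)=1$; then $u\in L^1(E,m)$, so by the equivalence i) $\Leftrightarrow$ ii) of Theorem \ref{thm 2.19} (with $p=1$) we also have $\alpha U_\alpha^\ast u=u$ $m$-a.e. Hence, for every $f\in bp\mathcal{B}$ and $\alpha>0$, using (\ref{eq 2.1}),
\[
\int \alpha U_\alpha f\, d\mu=\int (\alpha U_\alpha f)\,u\,dm=\int f\,(\alpha U_\alpha^\ast u)\,dm=\int f\,u\,dm=\int f\, d\mu .
\]
Thus $\mu\circ\alpha U_\alpha=\mu$; since moreover $\mu\ll m$ and $\mu$ is a probability measure, $\mu\in\mathcal{I}_{m,ac}$.

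\emph{From i) to ii).} Let $\mu\in\mathcal{I}_{m,ac}$ and choose a Radon--Nikodym density $u\in p\mathcal{B}$, $\mu=u\cdot m$; since $\mu(E)=1$ we have $m(u)=1$, so $u\in L^1(E,m)$. The invariance $\mu\circ\alpha U_\alpha=\mu$ says $\int \alpha U_\alpha f\,d\mu=\int f\, d\mu$ for all $f\in bp\mathcal{B}$, which by (\ref{eq 2.1}) becomes $\int f\,(\alpha U_\alpha^\ast u)\,dm=\int f\,u\,dm$. Since $u$ and $\alpha U_\alpha^\ast u$ are nonnegative and $m$-integrable --- indeed $\int \alpha U_\alpha^\ast u\,dm=\int u\,\alpha U_\alpha 1\,dm\le\int u\,dm=1$ by (\ref{eq 2.1}) and $\alpha U_\alpha 1\le 1$ --- testing against $f=1_A$, $A\in\mathcal{B}$, yields $\alpha U_\alpha^\ast u=u$ $m$-a.e.\ for every $\alpha>0$. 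Now apply Theorem \ref{thm 2.19} to the resolvent $\mathcal{U}^\ast$, which is legitimate because its adjoint with respect to $m$ is $\mathcal{U}$ and $m$ is sub-invariant for $\mathcal{U}^\ast$ as well (again by (\ref{eq 2.1}) and $\alpha U_\alpha 1\le 1$): the equivalence i) $\Leftrightarrow$ ii) of that theorem transforms $\alpha U_\alpha^\ast u=u$ into $\alpha U_\alpha u=u$ $m$-a.e.\ for all $\alpha>0$, which is condition ii). The ``in particular'' assertion --- that $u$ is $\mathcal{U}$-invariant, equivalently $\mathcal{U}^\ast$-invariant --- then follows from Theorem \ref{thm 2.19}, i) $\Rightarrow$ iii), together with Proposition \ref{prop 2.16}.

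The one delicate point --- and the closest thing to an obstacle --- is this symmetry step: the invariance of $\mu$ yields, via duality, only a statement about the action of $\mathcal{U}^\ast$ on the density $u$, and one must invoke Theorem \ref{thm 2.19} (the $L^p$-characterization of invariant functions, i.e.\ the fact that for an $L^p$ function harmonicity and co-harmonicity coincide) to transfer it back to $\mathcal{U}$. The remaining bookkeeping is mild: $u\in L^1(E,m)$ because $\mu$ is a probability measure, and $m$, being finite, is sub-invariant for $\mathcal{U}^\ast$, so Theorem \ref{thm 2.19} is indeed applicable with $\mathcal{U}^\ast$ in the role of $\mathcal{U}$.
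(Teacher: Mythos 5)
Your proof is correct and follows essentially the same route as the paper's: pass to the density, use the weak duality (\ref{eq 2.1}) to turn invariance of $\mu$ into $\alpha U_\alpha^\ast u = u$ $m$-a.e., and then invoke the equivalence i) $\Leftrightarrow$ ii) of Theorem \ref{thm 2.19} to transfer between $\mathcal{U}$ and $\mathcal{U}^\ast$ (and its implication to iii) for the $\mathcal{U}$-invariance statement). You merely spell out a few points the paper leaves implicit, such as the integrability bookkeeping and the sub-invariance of $m$ for $\mathcal{U}^\ast$.
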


\begin{proof}
i) $\Rightarrow$ ii). Let $u \in p\mathcal{B} \cap L^1(E, m)$ such that $\mu = u \cdot m \in \mathcal{I}_{m, ac}$. Then for every $f \in bp\mathcal{B}$ we have
\begin{equation} \label{eq 4.1}
\int{f \alpha U_\alpha^\ast u dm} = \int{u \alpha U_\alpha f dm = \int{fu dm}}
\end{equation}
hence $\alpha U_\alpha^\ast u = u$, $\alpha > 0$. By Theorem \ref{thm 2.19} we conclude that $u$ is $\mathcal{U}$-invariant.

The implication ii) $\Rightarrow$ i) follows by Proposition \ref{thm 2.19} and relation (\ref{eq 4.1}).
\end{proof}

Let $\mathcal{G}^m$ be the set of all probability measures $\mu$ on $(E, \mathcal{B})$ of the form $\mu = u\cdot m$, where $u$ is $\mathcal{U}$-invariant (with respect to $m$).

\begin{rem} \label{rem 2.22} 
i) Because the $\mathcal{U}$-invariant functions are $\mathcal{U}^{\ast}$-excessive, it follows that $\mathcal{G}^m$ is a set of sub-invariant measures.

ii) $\mathcal{G}^{m}$ is a non-empty convex set; $m \in \mathcal{G}_{m}$.

iii) $\mathcal{U}$ is $m$-recurrent if and only if $m \in \mathcal{I}$.

iv) We have the inclusion $\mathcal{I}_{m, ac} \subset \mathcal{G}^m $. 
If $\mathcal{U}$ is $m$-recurrent then $\mathcal{G}^m = \mathcal{I}_{m, ac}$.
\end{rem}

\begin{thm} \label{thm 2.26} 
Consider the following assertions.

i) $\mathcal{U}$ is $m$-irreducible.

\vspace{0.1cm}

ii) $\mathcal{G}^{m} = \{m\}$.

\vspace{0.1cm}

iii) The measure $m$ is extremal in $\mathcal{G}^{m}$.

\vspace{0.1cm}

iv) The measure $m$ is extremal in $\mathcal{I}$.

\vspace{0.1cm}

Then i) $\Rightarrow$ ii) $\Leftrightarrow$ iii). 

If $\mathcal{U}$ is $m$-symmetric then assertions i) - iii) are equivalent.

If $\mathcal{U}$ is $m$-recurrent then assertions i) - iv) are equivalent.
\end{thm}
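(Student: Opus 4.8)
The plan is to establish the unconditional chain i) $\Rightarrow$ ii) $\Leftrightarrow$ iii) directly, then isolate one auxiliary fact that drives both the symmetric and the recurrent cases, and finally add a short measure comparison to bring in iv).

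For i) $\Rightarrow$ ii): an element of $\mathcal{G}^m$ has the form $\mu = u\cdot m$ with $u\ge 0$ $\mathcal{U}$-invariant and $m(u) = 1$, and since $m(E) = 1$ we have $u\in L^1(E,m)$; thus Corollary~\ref{coro 2.21} (implication i) $\Rightarrow$ ii)) forces $u$ to be $m$-a.e.\ constant, hence $u = 1$ $m$-a.e.\ and $\mu = m$. The implication ii) $\Rightarrow$ iii) is trivial. For iii) $\Rightarrow$ ii) I argue by contraposition: if $\mu = u\cdot m\in\mathcal{G}^m$ with $\mu\neq m$, then $m(u) = 1 = m(E)$ forces $u$ not to be $m$-a.e.\ constant, so its essential infimum and supremum differ and one may choose a level $c$ with $0 < m(A) < 1$ for $A := [u\le c]$. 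By Theorem~\ref{thm 2.19} (equivalence iii) $\Leftrightarrow$ v)) the set $A$, hence also $E\setminus A$, is $\mathcal{U}$-invariant, so $1_A$ and $1_{E\setminus A}$ are nonnegative $\mathcal{U}$-invariant functions and $m = m(A)\,\mu_1 + m(E\setminus A)\,\mu_2$ with $\mu_1 := m(A)^{-1}1_A\cdot m$ and $\mu_2 := m(E\setminus A)^{-1}1_{E\setminus A}\cdot m$ two distinct elements of $\mathcal{G}^m$ and weights in $(0,1)$, contradicting the extremality of $m$ in $\mathcal{G}^m$.

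The auxiliary fact is: $(\ast)$ \emph{whenever $m$ is a probability measure, $\mathcal{G}^m = \{m\}$ implies that every $\mathcal{U}$-invariant $u\in L^p(E,m)$, $1\le p<\infty$, is $m$-a.e.\ constant.} Indeed, $u^{\pm}$ are $\mathcal{U}$-invariant by Proposition~\ref{prop 2.15} and lie in $L^1(E,m)$ since $m(E) = 1$; if $m(u^+) > 0$ then $m(u^+)^{-1}u^+\cdot m\in\mathcal{G}^m = \{m\}$ gives $u^+ = m(u^+)$ $m$-a.e., and otherwise $u^+ = 0$, and likewise for $u^-$. Given $(\ast)$: if $\mathcal{U}$ is $m$-symmetric, then ii) yields assertion ii) of Corollary~\ref{coro 2.21}, which the symmetric case of that corollary upgrades to $m$-irreducibility, i.e.\ our i); together with i) $\Rightarrow$ ii) $\Leftrightarrow$ iii) this makes i)--iii) equivalent. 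If instead $\mathcal{U}$ is $m$-recurrent, then $(\ast)$ again yields assertion ii) of Corollary~\ref{coro 2.21}, which, since $m(E) < \infty$ and $\mathcal{U}$ is $m$-recurrent, is equivalent there to assertion i); hence i) $\Leftrightarrow$ ii) $\Leftrightarrow$ iii) under recurrence.

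Finally, to incorporate iv) under $m$-recurrence, use Remark~\ref{rem 2.22}: recurrence gives $\mathcal{G}^m = \mathcal{I}_{m,ac}\subseteq\mathcal{I}$ and $m\in\mathcal{I}$. For iv) $\Rightarrow$ iii): a convex decomposition of $m$ inside $\mathcal{G}^m$ is one inside $\mathcal{I}$, so extremality in $\mathcal{I}$ makes it trivial. For iii) $\Rightarrow$ iv): iii) $\Leftrightarrow$ ii) gives $\mathcal{I}_{m,ac} = \{m\}$, and if $m = \lambda\mu_1 + (1-\lambda)\mu_2$ with $\mu_1,\mu_2\in\mathcal{I}$ and $\lambda\in(0,1)$, then $\mu_1\le\lambda^{-1}m$ and $\mu_2\le(1-\lambda)^{-1}m$, so $\mu_1,\mu_2\ll m$, whence $\mu_1,\mu_2\in\mathcal{I}_{m,ac} = \{m\}$ and $m$ is extremal in $\mathcal{I}$. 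I expect the only genuinely delicate step to be verifying that the passage ``all $L^p$-integrable invariant functions are constant $\Rightarrow$ $\mathcal{U}$ is $m$-irreducible'' is exactly what Corollary~\ref{coro 2.21} supplies under the symmetry, resp.\ recurrence, hypotheses; the remaining steps are routine convex-combination bookkeeping.
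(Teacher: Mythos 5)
Your proposal is correct, and it follows the paper's overall strategy (reduce everything to constancy of invariant densities, using that $m$ is a probability measure), but with two genuine variations in routing. For iii) $\Rightarrow$ ii) the paper truncates the density: it sets $\alpha:=\int u\wedge 1\,dm\in(0,1)$, notes $u\wedge 1$ is $\mathcal{U}$-invariant by Proposition \ref{prop 2.15}, and decomposes $m=\alpha\,\frac{u\wedge 1}{\alpha}\cdot m+(1-\alpha)\frac{1-u\wedge 1}{1-\alpha}\cdot m$, deriving a contradiction from extremality; you instead pass through Theorem \ref{thm 2.19} iii) $\Leftrightarrow$ v) to see that the level set $A=[u\le c]$ is $\mathcal{U}$-invariant and decompose $m$ via $1_A$ and $1_{E\setminus A}$ --- both are valid, yours trading the lattice property for the invariant-$\sigma$-algebra characterization (and correctly using $u\in L^1$ since $m(u)=1$). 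For the symmetric and recurrent upgrades ii) $\Rightarrow$ i), the paper argues directly: given an absorbing set $A$ with $m(A)>0$, Proposition \ref{prop 2.20} makes $1_A$ invariant and $\frac{1_A}{m(A)}\cdot m\in\mathcal{G}^m=\{m\}$ forces $m(E\setminus A)=0$; you funnel this through your auxiliary fact $(\ast)$ and the symmetric/recurrent cases of Corollary \ref{coro 2.21}, which is legitimate since those cases of the corollary encapsulate exactly the Proposition \ref{prop 2.20} argument (and the needed ii) $\Leftrightarrow$ iii) there is available because $m(E)=1<\infty$, as you note). Finally, for iv) you prove iv) $\Rightarrow$ iii) and iii) $\Rightarrow$ iv) using $\mathcal{G}^m=\mathcal{I}_{m,ac}\subset\mathcal{I}$, $m\in\mathcal{I}$, and the absolute continuity $\mu_i\le\lambda^{-1}m$ of the components, whereas the paper proves iv) $\Rightarrow$ i) directly (again via Proposition \ref{prop 2.20}) and iii) $\Rightarrow$ iv) via Lemma \ref{lem 2.24} and Remark \ref{rem 2.22}; the content is the same, your version being slightly more modular and the paper's slightly more self-contained. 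No gaps.
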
 

\begin{proof}
The implication i) $\Rightarrow$ ii) follows by Corollary \ref{coro 2.21}.

ii) $\Leftrightarrow$ iii). Clearly, ii) implies iii). Assume that iii) holds and let $\mu \in \mathcal{G}^m$, $\mu = u\cdot m$. 
If $u \leq 1$ or $u \geq 1$, since $\int u \wedge 1 dm = 1$, we get that $\mu = m$. Assume that $\mu \neq m$. 
Consequently we have $0 < \int u \wedge 1 dm < 1$, hence if we put $\alpha := \int u \wedge 1 dm$ then $\alpha \in (0,1)$. 
By Proposition \ref{prop 2.15} it follows that $u \wedge 1$ is also $\mathcal{U}$-invariant function. 
Therefore the measures $\mu_1 := \dfrac{u \wedge 1}{\alpha}\cdot m$ and $\mu_2 := \dfrac{1 - u\wedge 1}{1-\alpha}\cdot m$ belong to $\mathcal{G}^m$ and clearly we have $m = \alpha \mu_1 + (1-\alpha)\mu_2$. 
The measure $m$ being extremal in $\mathcal{G}^m$, we get the contradictory equality $\mu_1 = m$ and therefore ii) holds.

Assume now that $\mathcal{U}$ is $m$-symmetric. 
Clearly, is enough to show that ii) $\Rightarrow$ i), so let $A \subset E$ be an $\mathcal{U}$-absorbing set. 
By Proposition \ref{prop 2.20} we get that the function $1_A$ is $\mathcal{U}$-invariant. 
If $m(A) > 0$ then the measure $\dfrac{1_A}{m(A)}\cdot m$ belongs to $\mathcal{G}^m = \{m\}$ so $m(E \setminus A) = 0$, hence $\mathcal{U}$ is $m$-irreducible.

Le us consider the last case when $\mathcal{U}$ is $m$-recurrent. 
iii) $\Rightarrow$ iv).  
If $m = \alpha m_1 + ( 1 - \alpha ) m_2$ with $m_1$, $m_2 \in \mathcal{I}$ and $\alpha \in (0, 1)$ then by Lemma \ref{lem 2.24} and Remark \ref{rem 2.22}, iv) we have that $m_1 \in \mathcal{I}_{m, ac} = \mathcal{G}^m$, hence $m_1 = m$ and $m$ is extremal in $\mathcal{I}$.

iv) $\Rightarrow$ i). First we notice that under condition iv), from Remark \ref{rem 2.22}, iv) it follows that iii) and hence ii) hold. 
Let now $A \subset E$ be an $\mathcal{U}$-absorbing set. 
By Proposition \ref{prop 2.20} we get that the function $1_A$ is $\mathcal{U}$-invariant. 
If $m(A) > 0$ then the measure $\dfrac{1_A}{m(A)}\cdot m$ belongs to $\mathcal{G}^m = \{m\}$ so $m(E \setminus A) = 0$. 
In conclusion we obtain that $\mathcal{U}$ is $m$-irreducible.
\end{proof}

As an application of Theorem \ref{thm 2.26}, we end this sections with the following known result (cf. e.g. \cite{DaZa96}, Proposition 3.2.5; strongly continuous semigroups) on the singularity of extremal invariant measures. We make the remark that, in contrast with the previous work, we drop the strong continuity assumption.
The key ingredient is the ergodicity of $\mathcal{U}$ with respect to an extremal measure. 

\begin{prop} \label{prop 3.8} 
If $\mu$ and $\nu$ are extremal measures in $\mathcal{I}$ such that $\mu \neq \nu$ then $\mu$ and $\nu$ are singular.
\end{prop}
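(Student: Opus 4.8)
The plan is to argue by contradiction: assume $\mu\neq\nu$ are both extremal in $\mathcal{I}$ but are \emph{not} mutually singular, and deduce $\mu=\nu$. First I would set $\lambda:=\frac{1}{2}(\mu+\nu)$. Since $\mathcal{I}$ is convex, $\lambda\in\mathcal{I}$, so $\lambda$ is a $\mathcal{U}$-invariant (hence sub-invariant) probability measure, and all of Section 2 applies with $\lambda$ in the role of $m$. By Remark \ref{rem 2.22}(iii) the resolvent $\mathcal{U}$ is $\lambda$-recurrent, and because $\lambda(E)=1<\infty$, Corollary \ref{coro 2.4} gives $\alpha U_\alpha 1=1$ $\lambda$-a.e. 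Both $\mu$ and $\nu$ are absolutely continuous with respect to $\lambda$; write $\mu=f\cdot\lambda$ and $\nu=g\cdot\lambda$ with $f,g\in p\mathcal{B}\cap L^1(E,\lambda)$ and $\lambda(f)=\lambda(g)=1$. Lemma \ref{lem 2.24} then shows that $f$ and $g$ are $\mathcal{U}$-invariant with respect to $\lambda$.

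Next I would pass to the pointwise infimum $h:=f\wedge g$. By Proposition \ref{prop 2.15} (applied with $\lambda$) the $\mathcal{U}$-invariant functions in $L^1(E,\lambda)$ form a vector lattice, so $h$, $f-h=(f-g)^+$ and $g-h=(g-f)^+$ are all $\mathcal{U}$-invariant; together with $\alpha U_\alpha 1=1$ $\lambda$-a.e. and Theorem \ref{thm 2.19}, each of them is fixed $\lambda$-a.e. by every $\alpha U_\alpha$. Put $c:=\lambda(h)$. Non-singularity of $\mu,\nu$ forces $\lambda([f>0]\cap[g>0])>0$, whence $c>0$; and $\mu\neq\nu$ forces $f\neq g$ on a set of positive $\lambda$-measure, whence $c=\lambda(f\wedge g)<\lambda(f)=1$. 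Thus $c\in(0,1)$. Now decompose $f=h+(f-h)$: since $\lambda(h)=c$ and $\lambda(f-h)=1-c$, Lemma \ref{lem 2.24}(ii)$\Rightarrow$(i) shows that $\rho:=\frac{1}{c}\,h\cdot\lambda$ and $\sigma:=\frac{1}{1-c}(f-h)\cdot\lambda$ both lie in $\mathcal{I}_{\lambda,ac}\subset\mathcal{I}$, and $\mu=c\rho+(1-c)\sigma$. Extremality of $\mu$ gives $\mu=\rho$, i.e. $f=\frac{1}{c}h$ $\lambda$-a.e. The identical decomposition $g=h+(g-h)$ gives $\nu=c\rho+(1-c)\tau$ with the \emph{same} $\rho$ and $\tau:=\frac{1}{1-c}(g-h)\cdot\lambda\in\mathcal{I}$, so extremality of $\nu$ gives $\nu=\rho$, i.e. $g=\frac{1}{c}h$ $\lambda$-a.e. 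Hence $f=g$ $\lambda$-a.e., that is $\mu=\nu$, contradicting $\mu\neq\nu$; so $\mu$ and $\nu$ must be singular.

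Most of the argument is bookkeeping. The one point I expect to require care is that $\lambda=\frac{1}{2}(\mu+\nu)$ is only known to be \emph{invariant}, not extremal, and yet we need its absolutely continuous invariant densities to be genuinely $\mathcal{U}$-invariant and the renormalized pieces $\rho,\sigma,\tau$ to stay inside $\mathcal{I}$ — this is precisely where $\lambda$-recurrence (Remark \ref{rem 2.22}(iii)), finiteness of $\lambda$ (Corollary \ref{coro 2.4}), Theorem \ref{thm 2.19}, and Lemma \ref{lem 2.24} do the work. The only genuinely analytic step is checking $\lambda(h)\in(0,1)$ from the two standing hypotheses, and this is elementary.
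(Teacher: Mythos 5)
Your proof is correct, but it follows a genuinely different route from the paper's. The paper argues via ergodicity: since each of $\mu,\nu$ is invariant, $\mathcal{U}$ is recurrent with respect to it (Remark \ref{rem 2.22}, iii)), extremality then gives irreducibility by Theorem \ref{thm 2.26}, and Proposition \ref{prop 3.5} yields $\alpha U_\alpha 1_A \to \mu(A)$ in $L^p(\mu)$ and $\to \nu(A)$ in $L^p(\nu)$ for a set $A$ with $\mu(A)\neq\nu(A)$; passing to a common subsequence converging a.e., the sets where $\alpha_n U_{\alpha_n}1_A$ converges to $\mu(A)$, resp. to $\nu(A)$, are disjoint and carry full $\mu$-, resp. $\nu$-measure, which gives singularity directly. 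You instead run the classical convexity/Radon--Nikodym argument relative to the barycenter $\lambda=\tfrac12(\mu+\nu)$: the densities $f,g$ lie in $\mathcal{I}_1$ for $\lambda$ by Lemma \ref{lem 2.24} and Theorem \ref{thm 2.19}, the lattice property of $\mathcal{I}_1$ lets you split $f=h+(f-g)^+$ and $g=h+(g-f)^+$ with $h=f\wedge g$, non-singularity gives $c=\lambda(h)\in(0,1)$ (your step ``$f\neq g$ on a positive set $\Rightarrow c<1$'' implicitly uses $\lambda(f)=\lambda(g)$, which is fine), and extremality of $\mu$ and of $\nu$ forces both to equal the common component $\tfrac1c h\cdot\lambda$, contradicting $\mu\neq\nu$. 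Your route is more elementary: it bypasses the Abel ergodic theorem (Theorem \ref{thm 3.1}), Proposition \ref{prop 3.5}, and Theorem \ref{thm 2.26} altogether, needing only the Section 2 invariance results and Lemma \ref{lem 2.24} (indeed, even the appeals to recurrence and to $\alpha U_\alpha 1=1$ $\lambda$-a.e.\ are dispensable, since $\mathcal{I}_1$ is a lattice and Lemma \ref{lem 2.24} works with $\mathcal{I}_1$-densities directly). What the paper's approach buys in exchange is something slightly more constructive: explicit disjoint sets, defined by the limiting behaviour of $\alpha_n U_{\alpha_n}1_A$, supporting $\mu$ and $\nu$ respectively, i.e.\ a concrete ergodic separation of the two measures rather than an abstract contradiction.
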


\begin{proof}
Let $A \in \mathcal{B}$ such that $\mu (A) \neq \nu (A)$. 
By Remark \ref{rem 2.22}, iii), Theorem \ref{thm 2.26}, and Proposition \ref{prop 3.5} there exists a sequence $(\alpha_n)_{n \geq 1}$ decreasing to $0$ such that 
\[
\lim\limits_n \alpha_n U_{\alpha_n} 1_A = \mu (A), \; \mu \mbox{-a.e. and} \; \lim\limits_n \alpha_n U_{\alpha_n} 1_A = \nu (A), \; \nu \mbox{-a.e.}
\]
If we set 
\[
\Gamma_1 : = \left\{ x \in E : \lim\limits_n \alpha_n U_{\alpha_n} 1_A(x) = \mu (A) \right\},\
\Gamma_2 : = \left\{ x \in E : \lim\limits_n \alpha_n U_{\alpha_n} 1_A(x) = \nu (A) \right\},
\]
then $\Gamma_1 \cap \Gamma_2 = \emptyset$ and $\mu(A) = \nu(A) = 1$. Therefore $\mu$ and $\nu$ are singular.
\end{proof}

\section{Irreducibility  of (non-symmetric) Dirichlet forms}
In this section we assume that $\mathcal{U}$ and $\mathcal{U}^{\ast}$ are the resolvent and respectively the co-resolvent of a (non-symmetric) Dirichlet form $(\mathcal{E}, D(\mathcal{E}))$ on $L^2(E, m)$, i.e.,
\[
U_{\alpha}(L^2(E, m)) \subset D(\mathcal{E}), \; U_{\alpha}^{\ast}(L^2(E, m)) \subset D(\mathcal{E})
\]
and
\[
\mathcal{E}_{\alpha}(U_{\alpha}f, u) = \mathcal{E}_{\alpha}(u, U_{\alpha}^{\ast}f) = (f, u)_{L^2(E, m)}
\]
for all $\alpha > 0$, $f \in L^2(E, m)$ and $u \in D(\mathcal{E})$, where $\mathcal{E}_{\alpha}: = \mathcal{E} + \alpha(\cdot, \cdot)_{L^2(E, m)}$;
for the definition of the  (non-symmetric) Dirichlet form see \cite{MaRo92}, Definition 4.5.
Recall that $\mathcal{U}$ and $\mathcal{U}^\ast$ become (uniquely) strongly continuous sub-Markovian resolvents of contractive operators on $L^2(E, m)$  and $m$ is a sub-invariant measure (cf. \cite{MaRo92}, Theorem 2.8 and Theorem 4.4; see also Chapter II, Section 5).
 
According to the discussion at the beginning of Section 2, we can assume that $\mathcal{U}$ and $\mathcal{U}^\ast$ are sub-Markovian resolvents of kernels in weak duality with respect to $m$. In particular, all notions  that are related to $\mathcal{U}$ depend implicitly on the fixed measure $m$.

We suppose that $\mathcal{E}$ satisfies the {\it (strong) sector condition}, that is there exists a constant $k > 0$ such that
\[
|\mathcal{E}(u, v)| \leq k\mathcal{E}(u, u)^{\frac{1}{2}} \mathcal{E}(v,v)^{\frac{1}{2}}
\]
for all $u,v \in D(\mathcal{E})$.

We denote by $({\sf L}, D({\sf L}))$ (resp. $({\sf L}^{\ast}, D({\sf L}^{\ast}))$) the generator (resp. co-generator) of the form $(\mathcal{E}, D(\mathcal{E}))$,
\[
D({\sf L}) := U_{\alpha}(L^2(E, m)), \; {\sf L}(U_{\alpha}f) := \alpha U_{\alpha} f - f, \; f\in L^2(E, m)
\]
and recall that 
$\mathcal{E}(u,v) = -({\sf L}u,v)_{L^2(E, m)}$ for all  $u \in D({\sf L})$ and  $v \in D(\mathcal{E}).$

For the reader convenience we restate and prove the next well-known characterization of zero-energy elements.

\begin{lem} \label{lem 5.1} 
The following assertions are equivalent for $u \in L^2(E, m)$.

i) $u \in D(\mathcal{E})$ and $\mathcal{E}(u,u) = 0$.

\vspace{0.1cm}

ii) $u \in D({\sf L})$ and ${\sf L}u = 0$.

\vspace{0.1cm}

iii) $\alpha U_{\alpha}u = u$ for one (or equivalent for all) $\alpha > 0$.
\end{lem}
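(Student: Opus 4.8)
The plan is to prove the three equivalences by a short cycle, i) $\Rightarrow$ iii) $\Rightarrow$ ii) $\Rightarrow$ i), exploiting the symmetry of the argument in $\mathcal{E}$ and $\mathcal{E}^\ast$ (recall $\mathcal{E}^\ast(u,v) := \mathcal{E}(v,u)$ is again a Dirichlet form with resolvent $\mathcal{U}^\ast$, so every statement about $\mathcal{E}$ has an adjoint counterpart for free).

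\emph{i) $\Rightarrow$ ii).} Suppose $u \in D(\mathcal{E})$ with $\mathcal{E}(u,u) = 0$. Using the sector condition, for every $v \in D(\mathcal{E})$ we get $|\mathcal{E}(u,v)| \leq k\,\mathcal{E}(u,u)^{1/2}\mathcal{E}(v,v)^{1/2} = 0$, so $\mathcal{E}(u,v) = 0$ for all $v \in D(\mathcal{E})$. Then $\mathcal{E}_\alpha(u,v) = \alpha(u,v)_{L^2}$ for all $v \in D(\mathcal{E})$, which by the defining property of $U_\alpha$ (namely $\mathcal{E}_\alpha(U_\alpha f, v) = (f,v)_{L^2}$) says exactly $\mathcal{E}_\alpha(u,v) = \mathcal{E}_\alpha(U_\alpha(\alpha u), v)$ for all $v \in D(\mathcal{E})$. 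Since $\mathcal{E}_\alpha$ is a (non-symmetric) inner product that is coercive and continuous on $D(\mathcal{E})$, this forces $u = U_\alpha(\alpha u) = \alpha U_\alpha u$, i.e. $u \in D({\sf L})$ and ${\sf L}u = \alpha U_\alpha u - u = 0$.

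\emph{iii) $\Rightarrow$ ii) and ii) $\Rightarrow$ iii).} If $\alpha U_\alpha u = u$ for some $\alpha > 0$, then $u = U_\alpha(\alpha u) \in U_\alpha(L^2(E,m)) = D({\sf L})$ and ${\sf L}u = \alpha U_\alpha u - u = 0$; conversely if $u \in D({\sf L})$ with ${\sf L}u = 0$, write $u = U_\alpha f$ with $f := \alpha u - {\sf L}u = \alpha u$, whence $\alpha U_\alpha u = U_\alpha(\alpha u) = U_\alpha f = u$. The resolvent equation then propagates $\alpha U_\alpha u = u$ to all $\beta > 0$: from $U_\beta u = U_\alpha u + (\alpha - \beta)U_\beta U_\alpha u$ and $\alpha U_\alpha u = u$ one solves for $\beta U_\beta u = u$ (this is the standard ``one $\alpha$ suffices'' argument, already invoked in Theorem \ref{thm 2.19}, and here it is genuine $L^2$-equality).

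\emph{ii) $\Rightarrow$ i).} If $u \in D({\sf L})$ and ${\sf L}u = 0$, then by the identity $\mathcal{E}(u,v) = -({\sf L}u, v)_{L^2}$ valid for $u \in D({\sf L})$, $v \in D(\mathcal{E})$, we get $\mathcal{E}(u,v) = 0$ for all $v \in D(\mathcal{E})$; in particular $u \in D(\mathcal{E})$ and $\mathcal{E}(u,u) = 0$. This closes the cycle, and the parenthetical ``for one or equivalently for all $\alpha$'' in iii) was handled in the previous step. The only mildly delicate point is the coercivity step in i) $\Rightarrow$ ii): one must know that the sector condition plus $\mathcal{E}_\alpha(w,w) \geq \alpha\|w\|_{L^2}^2$ makes $\mathcal{E}_\alpha$ behave like a genuine scalar product on $D(\mathcal{E})$ so that $\mathcal{E}_\alpha(w,\cdot) \equiv 0$ on $D(\mathcal{E})$ implies $w = 0$ — this is exactly the Lax–Milgram input underlying the construction of $U_\alpha$ in \cite{MaRo92}, so I expect no real obstacle, only the need to cite it correctly.
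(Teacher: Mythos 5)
Your proof is correct and follows essentially the same route as the paper: sector condition to kill $\mathcal{E}(u,\cdot)$, the weak characterization of $U_\alpha$ to identify $u$ with $\alpha U_\alpha u$, then the trivial passages between iii), ii) via the definition of ${\sf L}$ and $\mathcal{E}(u,u)=-({\sf L}u,u)$. The only cosmetic difference is in i) $\Rightarrow$ iii): the paper tests against $U_\alpha^{\ast}f$ for arbitrary $f\in L^2(E,m)$ and reads off $(u-\alpha U_\alpha u,f)=0$, whereas you test $\mathcal{E}_\alpha(u-\alpha U_\alpha u,\cdot)$ against the difference itself and use $\mathcal{E}_\alpha(w,w)\geq\alpha\|w\|_{L^2}^2$, which needs only positivity of $\mathcal{E}$ rather than the co-resolvent.
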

\begin{proof}
i) $\Rightarrow$ iii). Let $u \in D(\mathcal{E})$ with $\mathcal{E}(u,u) = 0$ and $f \in L^2(E, m)$, then by the sector condition we get $\mathcal{E}(u, U_{\alpha}^{\ast} f) = 0$,
\[
(u - \alpha U_{\alpha}u, f)_{L^2(E, m)} = \mathcal{E}_{\alpha}(u, U_{\alpha}^{\ast}f) - \alpha(u, U_{\alpha}^{\ast}f) = \mathcal{E}(u, U_{\alpha}^{\ast}f) = 0,
\]
hence $\alpha U_{\alpha} u = u$.

The implication iii) $\Rightarrow$ ii) is clear by  the definition of $({\sf L}, D({\sf L}))$ and ii) $\Rightarrow$ i) follows since $\mathcal{E}(u,u) = -({\sf L}u, u)$ if $u \in D({\sf L})$.
\end{proof}

\begin{prop} \label{prop 5.2} 
The following assertions are equivalent for $u \in L^2(E, m) \cap L^{\infty}(E, m)$.

i) $u$ is $\mathcal{U}$-invariant (with respect to $m$).

\vspace{0.1cm}

ii) If $v \in D({\sf L})$ then $uv \in D({\sf L})$ and ${\sf L}(uv) = u{\sf L}v$.

\vspace{0.1cm}

iii) If $v,w \in D(\mathcal{E})$ then $uv \in D(\mathcal{E})$ and $\mathcal{E}(uv, w) = \mathcal{E}(v, uw)$.
\end{prop}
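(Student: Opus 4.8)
The plan is to prove the cyclic chain (i) $\Rightarrow$ (ii) $\Rightarrow$ (iii) $\Rightarrow$ (i). For (i) $\Rightarrow$ (ii) I would first note that the defining relation $U_{\alpha}(uf)=uU_{\alpha}f$ of $\mathcal{U}$-invariance, which holds $m$-a.e. for bounded measurable $f$, extends to all $f\in L^{2}(E,m)$ since both sides are bounded operators on $L^{2}(E,m)$ (using $u\in L^{\infty}(E,m)$) and bounded functions are dense; then, writing $v=U_{\alpha}\big((\alpha-{\sf L})v\big)$ for $v\in D({\sf L})$, one gets $uv=U_{\alpha}\big(u(\alpha-{\sf L})v\big)\in U_{\alpha}(L^{2}(E,m))=D({\sf L})$ and ${\sf L}(uv)=\alpha uv-u(\alpha-{\sf L})v=u{\sf L}v$.

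The heart of the proof is (ii) $\Rightarrow$ (iii), and the key step I would carry out is the a priori energy estimate
\[
\mathcal{E}(u\psi,u\psi)\le k^{2}\|u\|_{\infty}^{2}\,\mathcal{E}(\psi,\psi)\qquad\text{for all }\psi\in D({\sf L}).
\]
To prove it one may assume $\|u\|_{\infty}\le 1$, and iterating (ii) gives $u^{2^{n}}\psi\in D({\sf L})$ with ${\sf L}(u^{2^{n}}\psi)=u^{2^{n}}{\sf L}\psi$. Setting $X_{0}:=\mathcal{E}(\psi,\psi)$ and $X_{n}:=\mathcal{E}(u^{2^{n-1}}\psi,u^{2^{n-1}}\psi)\ge 0$ for $n\ge 1$, the relation $\mathcal{E}(w,w)=-({\sf L}w,w)$ on $D({\sf L})$ together with (ii) yields $X_{n}=-({\sf L}\psi,u^{2^{n}}\psi)=\mathcal{E}(\psi,u^{2^{n}}\psi)$ for $n\ge 1$, whence the strong sector condition gives $X_{n}\le kX_{0}^{1/2}X_{n+1}^{1/2}$ (for $n\ge 1$) while $|X_{n}|\le\|{\sf L}\psi\|\,\|\psi\|=:C$ uniformly in $n$; unwinding the recursion produces $X_{1}\le k^{2(1-2^{-j})}X_{0}^{1-2^{-j}}C^{2^{-j}}$ for every $j$, and $j\to\infty$ gives $X_{1}\le k^{2}X_{0}$, which is the estimate. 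Consequently multiplication by $u$ is bounded from $\big(D({\sf L}),\mathcal{E}_{1}^{1/2}\big)$ into $D(\mathcal{E})$; since $D({\sf L})$ is $\mathcal{E}_{1}$-dense in $D(\mathcal{E})$ and $D(\mathcal{E})$ embeds continuously into $L^{2}(E,m)$, it extends to $D(\mathcal{E})$ with the extension still equal to pointwise multiplication by $u$, so $uv\in D(\mathcal{E})$ for all $v\in D(\mathcal{E})$. For the identity I would use $\mathcal{E}(u\psi,w)=-({\sf L}(u\psi),w)=-(u{\sf L}\psi,w)=-({\sf L}\psi,uw)=\mathcal{E}(\psi,uw)$ for $\psi\in D({\sf L})$, $w\in D(\mathcal{E})$ (legitimate because $uw\in D(\mathcal{E})$), and then pass to the $\mathcal{E}_{1}$-limit in $\psi$.

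For (iii) $\Rightarrow$ (i) I would show $U_{\alpha}(uf)=uU_{\alpha}f$ $m$-a.e. for $f\in L^{2}(E,m)$: given $f,g\in L^{2}(E,m)$, by (iii) both $uU_{\alpha}f$ and $uU_{\alpha}^{\ast}g$ lie in $D(\mathcal{E})$, and combining the defining relations of $U_{\alpha},U_{\alpha}^{\ast}$ with $\mathcal{E}(uU_{\alpha}f,U_{\alpha}^{\ast}g)=\mathcal{E}(U_{\alpha}f,uU_{\alpha}^{\ast}g)$ one obtains $(uU_{\alpha}f,g)=\mathcal{E}_{\alpha}(uU_{\alpha}f,U_{\alpha}^{\ast}g)=\mathcal{E}_{\alpha}(U_{\alpha}f,uU_{\alpha}^{\ast}g)=(f,uU_{\alpha}^{\ast}g)=(uf,U_{\alpha}^{\ast}g)=(U_{\alpha}(uf),g)$, the last equality by the weak duality (\ref{eq 2.1}); arbitrariness of $g$ gives the claim, and a standard monotone convergence argument (splitting $u=u^{+}-u^{-}$ and exhausting $E$ by sets of finite $m$-measure) extends it to all $f\in bp\mathcal{B}$, so $u$ is $\mathcal{U}$-invariant.

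The main obstacle will be the energy estimate in the second paragraph: the identity $\mathcal{E}(u\psi,u\psi)=\mathcal{E}(\psi,u^{2}\psi)$ follows at once from (ii), but turning it into a bound by a constant times $\mathcal{E}(\psi,\psi)$ requires both the sector condition and the self-improving iteration, whose success hinges on the uniform-in-$n$ control of $X_{n}$ so that the correction factor $C^{2^{-j}}$ tends to $1$. Everything else — the density passage from $D({\sf L})$ to $D(\mathcal{E})$ and the duality computation for (iii) $\Rightarrow$ (i) — is routine once the estimate is in hand.
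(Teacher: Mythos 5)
Your argument is correct, but your route to assertion iii) differs from the paper's. The paper proves i) $\Rightarrow$ iii) directly, using the approximating forms $\mathcal{E}^{(\alpha)}(v,w):=\alpha\int w(v-\alpha U_\alpha v)\,dm$: invariance gives $\alpha U_\alpha(uv)=u\,\alpha U_\alpha v$ on $L^2$, so $\mathcal{E}^{(\alpha)}(uv,uv)=\int u^2\,\alpha v(v-\alpha U_\alpha v)\,dm\le \|u\|_\infty^2 k^2\,\mathcal{E}(v,v)$ uniformly in $\alpha$, whence $uv\in D(\mathcal{E})$, and letting $\alpha\to\infty$ in $\mathcal{E}^{(\alpha)}(uv,w)$ yields $\mathcal{E}(uv,w)=\mathcal{E}(v,uw)$ in one stroke; for iii) $\Rightarrow$ i) it then invokes Proposition \ref{prop 2.16}, which is essentially your duality computation. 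You instead deduce iii) from ii) by observing that property ii) propagates to the powers $u^{2^n}$, combining $\mathcal{E}(w,w)=-({\sf L}w,w)$ with the strong sector condition to get the self-improving recursion $X_n\le kX_0^{1/2}X_{n+1}^{1/2}$ together with the uniform bound $X_n\le\|{\sf L}\psi\|_2\|\psi\|_2$, and then extending from $D({\sf L})$ to $D(\mathcal{E})$ by density and closedness (note the degenerate cases $X_0=0$ or $C=0$ are trivial, and that $\mathcal{E}$ and its symmetric part agree on the diagonal, so your estimate does control the relevant $\widetilde{\mathcal{E}}_1^{1/2}$-norm, while the sector condition gives the continuity needed to pass to the limit in the identity). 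Your i) $\Rightarrow$ ii) and iii) $\Rightarrow$ i) coincide with the paper's. What the comparison buys: the paper's use of $\mathcal{E}^{(\alpha)}$ is shorter because invariance interacts directly with the resolvent approximation of the form and avoids any iteration, whereas your argument is a purely generator-level proof of ii) $\Rightarrow$ iii) that never uses invariance itself, which is of some independent interest (it shows the multiplier property of the generator alone forces the energy bound $\mathcal{E}(uv,uv)\le k^2\|u\|_\infty^2\mathcal{E}(v,v)$, with the same constant as the paper's), at the price of a longer and more delicate chain.
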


\begin{proof}
i) $\Rightarrow$ ii). Let $\alpha > 0$ and $v = U_{\alpha}f$, $f \in L^2(E, m)$. 
If $u$ is $\mathcal{U}$-invariant then $uv = U_{\alpha}(uf)$. 
Therefore $uv \in D({\sf L})$ and
\[
{\sf L}(uv) = \alpha U_{\alpha}(uf) - uf = u(\alpha U_{\alpha}f - f) = u{\sf L}v.
\]

ii) $\Rightarrow$ i). Let $f \in L^2(E, m)$, $v = U_{\alpha}f$. 
Then by ii) there exists $g \in L^2(E, m)$ such that $uv = U_{\alpha}g$ and from ${\sf L}(uv) = u{\sf L}v$ we get
\[
{\sf L}(uv) = \alpha U_{\alpha}g - g = \alpha uv - g, \; u{\sf L}v = u(\alpha U_{\alpha}f - f) = \alpha uv - uf.
\]
Hence $g = uf$ and thus $u U_{\alpha}f = U_{\alpha}(uf)$, i.e. $u$ is $\mathcal{U}$-invariant.

i) $\Rightarrow$ iii). Let $v,w \in D(\mathcal{E})$ and $u$ be $\mathcal{U}$-invariant. Then
\[
\mathop{\sup}\limits_{\alpha} \mathcal{E}^{\alpha}(uv, uv) = \mathop{\sup}\limits_{\alpha}\int \alpha uv(uv - \alpha U_{\alpha} uv) dm = 
\mathop{\sup}\limits_{\alpha}\int u^2\alpha v(v -\alpha U_{\alpha}v) dm \leq
\]
\[
\leq \| u \|^2_{\infty}\mathop{\sup}\limits_{\alpha}\int \alpha v(v - \alpha U_{\alpha}v) dm \leq \|u\|^2_{\infty}k^2\mathcal{E}(v,v) < \infty.
\]
We deduce that $uv \in D(\mathcal{E})$ and therefore
\[
\mathcal{E}(uv, w) = \mathop{\lim}\limits_{\alpha \to \infty}\mathcal{E}^{\alpha}(uv, w) = \mathop{\lim}\limits_{\alpha \to \infty}\int \alpha w(uv - \alpha U_{\alpha}(uv)) dm =
\]
\[
= \mathop{\lim}\limits_{\alpha \to \infty}\int\alpha uw(v -\alpha U_{\alpha}v) dm = \mathcal{E}(v, uw).
\]

iii) $\Rightarrow$ i). Let $v = U_{\alpha} f$, $w = U_{\alpha}^{\ast}g$ with $f,g \in bp\mathcal{B}\cap L^2(E, m)$. By hypothesis iii) we have $uv, uw \in D(\mathcal{E})$ and
\[
\int uf U_{\alpha}^{\ast}g dm = \mathcal{E}_{\alpha}(v, uw) = \mathcal{E}_{\alpha}(uv, w) = \int gu U_{\alpha}f dm.
\]
According with Proposition \ref{prop 2.16} we conclude that $u$ is $\mathcal{U}$-invariant.
\end{proof}

\begin{rem}
By Theorem \ref{thm 2.19}, if $u \in L^2(E, m) \cap L^{\infty}(E, m)$ satisfies any (and hence all) of the conditions in Lemma \ref{lem 5.1} then $u$ also satisfies the ones in Proposition \ref{prop 5.2}. If $\mathcal{E}$ is Markovian then the converse is also true. 
\end{rem}

As in \cite{Fu07}, Definition  4.1, the Dirichlet form $\mathcal{E}$ is called {\it recurrent} (resp. {\it irreducible}) if the associated resolvent $\mathcal{U}$ is $m$-recurrent (resp. $m$-irreducible). Let $(\widetilde{\mathcal{E}}, D(\widetilde{\mathcal{E}}))$ be the symmetric part of $\mathcal{E}$,
\[
\widetilde{\mathcal{E}}(u,v):= \frac{1}{2}(\mathcal{E}(u,v) + \mathcal{E}(v,u)) \;  \mbox{for all} \; u,v \in D(\widetilde{\mathcal{E}}) := D(\mathcal{E}).
\]

\begin{coro} \label{coro 5.3} 
Assume that $m(E) < \infty$.

i) The following assertions are equivalent.

i.1) $\mathcal{E}$ is recurrent.

\vspace{0.1cm}

i.2) $\widetilde{\mathcal{E}}$ is recurrent.

\vspace{0.1cm}

i.3) $1 \in D(\mathcal{E})$ and $\mathcal{E}(1,1) = 0$.

\vspace{0.1cm}

ii) If $\mathcal{E}$ is recurrent then the following assertions are equivalent.

\vspace{0.1cm}

ii.1) $\mathcal{E}$ is irreducible.

\vspace{0.1cm}

ii.2) $\widetilde{\mathcal{E}}$ is irreducible.

\vspace{0.1cm}

ii.3) If $u \in D(\mathcal{E})$ and $\mathcal{E}(u,u)=0$ then $u$ is constant.

\vspace{0.1cm}

ii.4) If $u \in D({\sf L})$ and ${\sf L}u = 0$ then $u$ is constant.

\vspace{0.2cm}

ii.5) If $u \in D({\sf L})$ such that for all $v \in D({\sf L})$ we have that $uv \in D({\sf L})$ and ${\sf L}(uv) = u{\sf L}v$ then $u$ is constant.

\vspace{0.1cm}

ii.6) $\int{(\alpha U_\alpha u - \frac{1}{m(E)}\int{u dm})^2 dm} \mathop{\longrightarrow}\limits_{\alpha \to 0} 0$ for all $u \in L^2(E, m)$.
\end{coro}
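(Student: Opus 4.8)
The statement is a compilation result, so the plan is to route every item into Lemma~\ref{lem 5.1}, Corollary~\ref{coro 2.4}, Proposition~\ref{prop 3.5} and Proposition~\ref{prop 5.2}, exploiting that $m(E)<\infty$ forces $1\in L^2(E,m)$. \emph{Part i).} By Lemma~\ref{lem 5.1}, condition i.3) is equivalent to $\alpha U_\alpha 1=1$ $m$-a.e.\ for all $\alpha>0$, and since $m(E)<\infty$ Corollary~\ref{coro 2.4} identifies the latter with $m$-recurrence of $\mathcal{U}$, i.e.\ with i.1). For i.2) $\Leftrightarrow$ i.3) I would note that the symmetric part $\widetilde{\mathcal{E}}$ is again a symmetric Dirichlet form on $L^2(E,m)$ with the same sub-invariant measure $m$ (and it trivially satisfies the sector condition), so the already proved equivalence i.1) $\Leftrightarrow$ i.3), applied to $\widetilde{\mathcal{E}}$, says that $\widetilde{\mathcal{E}}$ is recurrent iff $1\in D(\widetilde{\mathcal{E}})$ and $\widetilde{\mathcal{E}}(1,1)=0$; since $D(\widetilde{\mathcal{E}})=D(\mathcal{E})$ and $\widetilde{\mathcal{E}}(1,1)=\mathcal{E}(1,1)$, this is exactly i.3).

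\emph{Part ii)}, first pass. Assume $\mathcal{E}$ recurrent. By Lemma~\ref{lem 5.1} the conditions ii.3) and ii.4) are the same statement, namely that every $u\in L^2(E,m)$ with $\alpha U_\alpha u=u$ is constant. Because $m(E)<\infty$ and $\mathcal{U}$ is $m$-recurrent, Proposition~\ref{prop 3.5} applied with $p=2$ gives in one stroke ii.1) $\Leftrightarrow$ ii.3) $\Leftrightarrow$ ii.6): its assertion i) reduces, under recurrence, to ``$\mathcal{U}$ is $m$-irreducible'' ($=$ ii.1)), its assertion ii) is ii.3), and its assertion iii) for $p=2$ is exactly ii.6) (with $c_u=\frac{1}{m(E)}\int u\,dm$, since $m(E)<\infty$). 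For ii.1) $\Leftrightarrow$ ii.2) I would pass again to $\widetilde{\mathcal{E}}$, which is recurrent by Part i): the equivalence ii.1) $\Leftrightarrow$ ii.3) applied to $\widetilde{\mathcal{E}}$ characterizes its irreducibility by ``$u\in D(\widetilde{\mathcal{E}})$, $\widetilde{\mathcal{E}}(u,u)=0\Rightarrow u$ constant'', which, using once more $D(\widetilde{\mathcal{E}})=D(\mathcal{E})$ and $\widetilde{\mathcal{E}}(u,u)=\mathcal{E}(u,u)$, is literally ii.3).

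\emph{Part ii)}, inserting ii.5). Recurrence yields $\alpha U_\alpha 1=1$, hence $1=U_\alpha(\alpha 1)\in D({\sf L})$ with ${\sf L}1=0$; if $u$ satisfies the hypothesis of ii.5) then taking $v=1$ gives ${\sf L}u={\sf L}(u\cdot 1)=u\,{\sf L}1=0$, so ii.4) $\Rightarrow$ ii.5). For the converse ii.5) $\Rightarrow$ ii.1), if $A\in\mathcal{B}$ is $\mathcal{U}$-absorbing then by Proposition~\ref{prop 2.20} (recurrence) $1_A$ is $\mathcal{U}$-invariant, whence $\alpha U_\alpha 1_A=1_A\,\alpha U_\alpha 1=1_A$ and so $1_A\in D({\sf L})$; since $1_A\in L^2(E,m)\cap L^\infty(E,m)$, Proposition~\ref{prop 5.2} shows that $1_A v\in D({\sf L})$ and ${\sf L}(1_A v)=1_A{\sf L}v$ for all $v\in D({\sf L})$, so ii.5) forces $1_A$ to be constant, i.e.\ $m(A)=0$ or $m(E\setminus A)=0$; thus $\mathcal{U}$ is $m$-irreducible. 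This closes the cycle ii.1) $\Rightarrow$ ii.4) $\Rightarrow$ ii.5) $\Rightarrow$ ii.1), and together with ii.2), ii.3), ii.6) all six conditions are equivalent.

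The argument is largely bookkeeping; the points I expect to require care are the two reductions to the symmetric part --- one must justify that $\widetilde{\mathcal{E}}$ is a bona fide (symmetric) Dirichlet form for which $m$ is sub-invariant, which follows from the sector condition making $\mathcal{E}_1^{1/2}$ and $\widetilde{\mathcal{E}}_1^{1/2}$ equivalent norms together with $m$-symmetry of the symmetric part --- and the implication ii.5) $\Rightarrow$ ii.1), where one has to recognize that a $\mathcal{U}$-absorbing set produces, through Propositions~\ref{prop 2.20} and \ref{prop 5.2}, an element of $D({\sf L})$ meeting the multiplicativity hypothesis of ii.5).
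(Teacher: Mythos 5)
Your proposal is correct and follows essentially the same route as the paper, which simply cites Lemma \ref{lem 5.1} and Corollary \ref{coro 2.4} for part i) (applied also to the symmetric part) and a combination of Lemma \ref{lem 5.1}, Proposition \ref{prop 5.2}, Corollary \ref{coro 2.21}, and Proposition \ref{prop 3.5} for part ii); your only deviation is handling absorbing sets through Proposition \ref{prop 2.20} directly (together with Proposition \ref{prop 5.2}) instead of invoking Corollary \ref{coro 2.21}, which is an equivalent bookkeeping choice, and your explicit cycle through ii.4) and ii.5) fills in exactly what the paper leaves implicit.
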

\begin{proof}
The equivalence i.1) $\Leftrightarrow$ i.3) follows by Lemma \ref{lem 5.1} and Corollary \ref{coro 2.4}. 
This also shows the equivalence with i.2) of the other two assertions. 
The fact that assertions ii.1) - ii.6) are equivalent follows by a simple combination of Lemma \ref{lem 5.1}, 
Proposition \ref{prop 5.2}, Corollary \ref{coro 2.21}, and Proposition \ref{prop 3.5}. 
\end{proof}

\noindent
{\bf Extremality of Gibbs states.} From now on our framework is the one considered in \cite{AlKoRo97a}. 
All measures which appear are probability measures on a locally convex topological vector space $E$ and its borel $\sigma$-algebra $\mathcal{B}$. 
The set $\mathcal{F}C_b^\infty$ of all {\it finitely based smooth bounded functions on $E$} is defined as
\[
\mathcal{F}C_b^\infty : = \{ f(l_1, \ldots, l_n) : n \in \mathbb{N}, f \in C_b^\infty(\mathbb{R}^n), l_1, \ldots, l_n \in E'\},
\]
where $E'$ is the topological dual space of $E$. For $K \subset E$ and $(b_k)_{k \in K}$ 
a family of $\mathcal{B}$-measurable functions, we denote by $\mathcal{G}^b$ the set of all probability measures $m$ on $E$ such that $b_k \in L^2(E, m)$ and 
\begin{equation} \label{eq 5.1}
\int{\frac{\partial u}{\partial k} dm} = - \int{u b_k dm}, 
\end{equation}
for all $u \in \mathcal{F}C_b^\infty$ and $k \in K$. Elements in $\mathcal{G}^b$ are called {\it Gibbs states associated with b}. 
We fix $K$, $b$, and $m \in \mathcal{G}^b$, and we consider the corresponding Dirichlet form 
$(\mathcal{E}_{m, k}, D(\mathcal{E}_{m, k}))$ defined as the closure on $L^2 (E, m)$ of
\[
\mathcal{E}_{m, k}(u, v) = \int{\frac{\partial u}{\partial k}\frac{\partial v}{\partial k} dm}; \; \; \; \; \; \; u,\; v \in \mathcal{F}C_b^\infty.
\]

Hereinafter we assume that $K$ is countable and 
\[
\mathop{\sum\limits_{k \in K}} |l(k)|^2 < \infty \; \; \; \; \; \; {\rm for \; all} \; l \in E'.
\]
Then we can define the Dirichlet form $(\mathcal{E}_m, D(\mathcal{E}_m))$ by setting
\[
D(\mathcal{E}_m) : = \{ u \in \bigcap\limits_{k \in K} D(\mathcal{E}_{m, k}) : \sum\limits_{k \in K} \mathcal{E}_{m, k}(u, u) < \infty \}
\]
and
\[
\mathcal{E}_m(u, v) : = \frac{1}{2} \sum\limits_{k \in K} \mathcal{E}_{m, k}(u, v),  \; \; \; \; u, \; v \in D(\mathcal{E}_m).
\]
For more details on the definition and closability  of the forms introduced above see \cite{AlKoRo97a} and the references therein.

Further, we denote by $\mathcal{U} = (U_\alpha)_{\alpha > 0}$ the resolvent of kernels associated to $(\mathcal{E}_m, D(\mathcal{E}_m))$ and, as in Section 3, let $\mathcal{I}_{m,ac}$ be the set of all $\mathcal{U}$-invariant probability  measures which are absolutely continuous with respect to $m$.

\begin{rem} 
Since $1 \in D(\mathcal{E}_m)$ and $\mathcal{E}_m(1,1) = 0$, by Corollary \ref{coro 5.3} we have that $\mathcal{E}_m$ is recurrent.
\end{rem}

Let $\mathcal{G}^b_{m, ac}$ be the set of all probability measures from  $\mathcal{G}^b$ which are absolutely continuous with respect to $m$. 
We give now  a characterization of $\mathcal{G}^b_{m, ac}$ in terms of excessive functions.

\begin{thm} \label{thm 5.5} 
The following assertions are equivalent for $\rho \in p\mathcal{B} \cap L^1(E, m)$ such that $m(\rho) = 1$.

i) There exists an $\mathcal{U}$-excessive function which is an $m$-version of $\rho$.

\vspace{0.1cm}

ii) The measure $\rho \cdot m$ belongs to $ \mathcal{G}^b$.

\vspace{0.1cm}

Consequently, $\mathcal{I}_{m, ac} = \mathcal{G}^b_{m, ac}$.
\end{thm}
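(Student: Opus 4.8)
The plan is to deduce both the equivalence i)$\Leftrightarrow$ii) and the concluding display from the single identity $\mathcal{I}_{m,ac}=\mathcal{G}^b_{m,ac}$.

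\emph{Reduction.} First recall that $\mathcal{U}$ is $m$-recurrent (by Corollary~\ref{coro 5.3}, as noted in the Remark preceding the theorem, using $1\in D(\mathcal{E}_m)$, $\mathcal{E}_m(1,1)=0$ and $m(E)=1<\infty$); hence $\alpha U_\alpha 1=1$ $m$-a.e. by Corollary~\ref{coro 2.4}. In this situation condition i) is equivalent to $\rho\cdot m\in\mathcal{I}_{m,ac}$: if $v$ is a $\mathcal{U}$-excessive $m$-version of $\rho$, then $\alpha U_\alpha v=v$ $m$-a.e. for all $\alpha$ (Proposition~\ref{prop2.3}, recurrence), so $\rho\cdot m=v\cdot m\in\mathcal{I}_{m,ac}$ by Lemma~\ref{lem 2.24}; conversely, the density $u$ supplied by Lemma~\ref{lem 2.24} satisfies $\alpha U_\alpha u=u$ for every $\alpha$, and such a function is $\mathcal{U}$-excessive. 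Since $\rho\cdot m$ is always absolutely continuous w.r.t. $m$, condition ii) is just ``$\rho\cdot m\in\mathcal{G}^b_{m,ac}$''; hence proving i)$\Leftrightarrow$ii) for every admissible $\rho$ is the same as proving $\mathcal{I}_{m,ac}=\mathcal{G}^b_{m,ac}$, which is also the concluding assertion. It remains to prove the two inclusions.

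\emph{$\mathcal{G}^b_{m,ac}\subseteq\mathcal{I}_{m,ac}$.} The decisive point is that on $\mathcal{F}C_b^\infty$ the generator ${\sf L}$ of $\mathcal{E}_m$ has the measure-independent form ${\sf L}u=\tfrac12\sum_{k\in K}\bigl(\frac{\partial^2 u}{\partial k^2}+b_k\frac{\partial u}{\partial k}\bigr)$, obtained by applying the integration-by-parts identity (\ref{eq 5.1}) to the test function $v\,\frac{\partial u}{\partial k}$. Now let $\mu=\rho\cdot m\in\mathcal{G}^b$. Applying (\ref{eq 5.1}) \emph{relative to $\mu$} to $\frac{\partial u}{\partial k}\in\mathcal{F}C_b^\infty$ gives $\int\frac{\partial^2 u}{\partial k^2}\,d\mu=-\int b_k\frac{\partial u}{\partial k}\,d\mu$, whence $\int{\sf L}u\,d\mu=0$ for every $u\in\mathcal{F}C_b^\infty$. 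Since $\mathcal{F}C_b^\infty$ is a core for $\mathcal{E}_m$ this propagates to $\int{\sf L}g\,d\mu=0$ for all $g\in D({\sf L})$; taking $g=U_\alpha f$ with $f\in p\mathcal{B}\cap L^\infty(E,m)$ — so that ${\sf L}g=\alpha U_\alpha f-f$ is bounded, because $\alpha U_\alpha 1=1$ — one gets $\int\alpha U_\alpha f\,d\mu=\int f\,d\mu$. A density/monotone-class argument then yields $\mu\circ\alpha U_\alpha=\mu$, i.e. $\mu\in\mathcal{I}_{m,ac}$.

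\emph{$\mathcal{I}_{m,ac}\subseteq\mathcal{G}^b_{m,ac}$.} Given $\mu=\rho\cdot m\in\mathcal{I}_{m,ac}$, by Lemma~\ref{lem 2.24} the density $\rho$ is $\mathcal{U}$-invariant, hence so is each truncation $\rho_n:=\rho\wedge n\in L^2(E,m)\cap L^\infty(E,m)$ (Remark~\ref{rem 2.16}). Proposition~\ref{prop 5.2}, used first with the factor $1\in D(\mathcal{E}_m)$, gives $\rho_n\in D(\mathcal{E}_m)$ (and then $\rho_n^{\,2}\in D(\mathcal{E}_m)$), and next, with factors $\rho_n$, $1$, $\rho_n$, that $\mathcal{E}_m(\rho_n,\rho_n)=\mathcal{E}_m(1,\rho_n^{\,2})=0$, because $\mathcal{E}_m(1,h)=\tfrac12\sum_k\int\frac{\partial 1}{\partial k}\frac{\partial h}{\partial k}\,dm=0$; thus $\frac{\partial\rho_n}{\partial k}=0$ $m$-a.e. for every $k$. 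For $u\in\mathcal{F}C_b^\infty$ the function $u\rho_n$ is a bounded element of $D(\mathcal{E}_{m,k})$ to which (\ref{eq 5.1}) extends by closability, and the product rule together with $\frac{\partial\rho_n}{\partial k}=0$ gives $\frac{\partial(u\rho_n)}{\partial k}=\rho_n\frac{\partial u}{\partial k}$; hence $\int\rho_n\frac{\partial u}{\partial k}\,dm=-\int u\rho_n b_k\,dm$. Letting $n\to\infty$ (dominated convergence, using $\rho\in L^1(E,m)$ and the boundedness of $\frac{\partial u}{\partial k}$ on the left, and $b_k\in L^2(E,\mu)$ on the right) yields $\int\frac{\partial u}{\partial k}\,d\mu=-\int u b_k\,d\mu$, i.e. $\mu\in\mathcal{G}^b$.

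\emph{Main obstacle.} The substantive work is in the analytic facts invoked above: that $\mathcal{F}C_b^\infty$ is an operator core for $({\sf L},D({\sf L}))$ and a form core for each $\mathcal{E}_{m,k}$, that (\ref{eq 5.1}) propagates from $\mathcal{F}C_b^\infty$ to the function classes actually used (to $U_\alpha f$ and to products $u\rho_n$), and the accompanying integrability estimates — in particular $\sum_k b_k^2\in L^1(E,\mu)$, needed both for the interchange of $\sum_k$ with $\int$ and for the passage to the limit in $n$. These are exactly the standing hypotheses of the framework recalled just before the theorem (cf. \cite{AlKoRo97a}); granting them, the argument reduces to the bookkeeping above together with Lemma~\ref{lem 2.24}, Propositions~\ref{prop2.3} and~\ref{prop 5.2}, and Corollaries~\ref{coro 5.3} and~\ref{coro 2.4}.
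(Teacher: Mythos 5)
Your reduction of the theorem to the single identity $\mathcal{I}_{m,ac}=\mathcal{G}^b_{m,ac}$ is fine (it is essentially how the ``Consequently'' part follows in the paper, via recurrence, Proposition \ref{prop2.3}, Proposition \ref{prop 2.2} and Lemma \ref{lem 2.24}), and your argument for $\mathcal{I}_{m,ac}\subseteq\mathcal{G}^b_{m,ac}$ is a hands-on version of the paper's i) $\Rightarrow$ ii) (which instead quotes the chain rule and \cite{AlKuRo90}, Theorem 2.5, after observing $\mathcal{E}_m(\rho\wedge n,\rho\wedge n)=0$); note, though, a circularity in your limit $n\to\infty$: you invoke $b_k\in L^2(E,\mu)$ to dominate the right-hand side, but $b_k\in L^2(E,\mu)$ is part of the definition of $\mu\in\mathcal{G}^b$, i.e.\ part of what is being proved, and it does not follow from $b_k\in L^2(E,m)$, $\rho\in L^1(E,m)$ alone.

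The genuine gap is in the inclusion $\mathcal{G}^b_{m,ac}\subseteq\mathcal{I}_{m,ac}$ (the hard direction, ii) $\Rightarrow$ i)). Your argument needs: (a) $\mathcal{F}C_b^\infty\subset D({\sf L})$ with ${\sf L}u=\tfrac12\sum_k(\partial^2_k u+b_k\partial_k u)$, which requires the series to converge in $L^2(E,m)$ and hence quantitative assumptions on $b$ that are not made; (b) that $\mathcal{F}C_b^\infty$ is an \emph{operator} core for $({\sf L},D({\sf L}))$ — a much stronger statement than the form-core property that holds by construction, and in fact a delicate (Markov/essential self-adjointness) uniqueness issue which is false in general; (c) an integrability bridge in the propagation step, since graph-norm convergence in $L^2(E,m)$ gives $\int{\sf L}u_n\,d\mu\to\int{\sf L}g\,d\mu$ only if $\rho\in L^2(E,m)$ or $\rho$ is bounded, whereas here $\rho\in L^1(E,m)$ only; and (d) $\sum_k b_k^2\in L^1(E,\mu)$ for the interchange of sum and integral. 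You acknowledge these points but claim they are ``exactly the standing hypotheses of the framework''; they are not — the framework assumes only that $K$ is countable, $\sum_{k\in K}|l(k)|^2<\infty$ for $l\in E'$, $b_k\in L^2(E,m)$, and the integration-by-parts identity (\ref{eq 5.1}). Avoiding precisely these regularity assumptions on $b$ is the point of the theorem, and it is why the paper's proof of ii) $\Rightarrow$ i) goes through the nontrivial result of \cite{BoRo95}, Lemma 6.14, that $\rho\cdot m\in\mathcal{G}^b$ forces $\sqrt{\rho}\in D(\mathcal{E}_m)$ with $\mathcal{E}_m(\sqrt{\rho},\sqrt{\rho})=0$; then Lemma \ref{lem 5.1} and Theorem \ref{thm 2.19} give that $\sqrt{\rho}$ is $\mathcal{U}$-invariant, and the identity $\rho=\rho\,\alpha U_\alpha 1=\alpha\sqrt{\rho}\,U_\alpha\sqrt{\rho}=\alpha U_\alpha\rho$ $m$-a.e.\ together with Proposition \ref{prop 2.2} yields the excessive $m$-version. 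Without an argument replacing \cite{BoRo95}, Lemma 6.14 (or an explicit additional hypothesis on $b$), your proof of this inclusion does not go through.
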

\begin{proof} 
i) $\Rightarrow$ ii). Without loss of generality we may assume that $\rho \in  \mathcal{E}(\mathcal{U})$. 
Suppose first that  $\rho \in bp(\mathcal{B}) \cap \mathcal{E}(\mathcal{U})$. 
By Proposition \ref{prop2.3} it follows that $\alpha U_\alpha \rho = \rho$ for all $\alpha > 0$, hence $\rho \in D(\mathcal{E}_{m})$. 
From Lemma \ref{lem 5.1}  we have that $\mathcal{E}_m(\rho, \rho) = 0$ and by the chain rule ( see \cite{AlKoRo97a}, Remark 1.1, iii)) 
and \cite{AlKuRo90}, Theorem 2.5, we conclude that $\rho \cdot m \in \mathcal{G}^b$. 
If $\rho \in \mathcal{E}(\mathcal{U})$ then $(\inf(\rho, n))_{n} \subset bp\mathcal{B} \cap \mathcal{E}(\mathcal{U})$ 
converges pointwise to $\rho$ and by the dominated convergence theorem  applied in relation (\ref{eq 5.1}) we get that $\rho \cdot m \in \mathcal{G}^b$.

ii) $\Rightarrow$ i).  If  $\rho \cdot m \in \mathcal{G}^b$ then by \cite{BoRo95}, Lemma 6.14 it follows that $\sqrt{\rho} \in D(\mathcal{E}_m)$ and $\mathcal{E}_m(\sqrt{\rho}, \sqrt{\rho}) = 0$. 
By Lemma \ref{lem 5.1} and Theorem \ref{thm 2.19} it follows that $\sqrt{\rho}$ is $\mathcal{U}$-invariant. 
Since
\[
\rho = \rho \alpha U_\alpha 1 = \alpha \sqrt{\rho} U_\alpha \sqrt{\rho} = \alpha U_\alpha \rho \quad m \mbox{-a.e.}
\]
by Proposition \ref{prop 2.2} we get that condition i) is satisfied.
\end{proof} 

Now, Theorem \ref{thm 5.5} places us in the context of Theorem \ref{thm 2.26} and we obtain:

\begin{thm} \label{thm 5.6} 
The following assertions are equivalent.

i) The form $\mathcal{E}_m$ is irreducible.

\vspace{0.1cm}

ii) $\mathcal{G}^b_{m, ac} = \{m\}$.

\vspace{0.1cm}

iii) The measure $m$ is extremal in $\mathcal{G}^b_{m, ac}$.

\vspace{0.1cm}

iv) The measure $m$ is extremal in $\mathcal{G}^b$.
\end{thm}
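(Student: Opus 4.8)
The plan is to deduce the theorem from Theorem~\ref{thm 2.26} by matching $\mathcal{G}^b_{m,ac}$ with the set $\mathcal{G}^m$ of measures of the form $u\cdot m$ with $u$ a $\mathcal{U}$-invariant function, and then to handle the passage from $\mathcal{G}^b_{m,ac}$ to the full Gibbs simplex $\mathcal{G}^b$ by a soft convexity argument.

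First I would note that $\mathcal{E}_m$ is recurrent: this is exactly the Remark preceding the statement (we have $1\in D(\mathcal{E}_m)$ and $\mathcal{E}_m(1,1)=0$, and $m(E)=1<\infty$, so Corollary~\ref{coro 5.3} applies), hence the associated resolvent $\mathcal{U}$ is $m$-recurrent. Consequently Remark~\ref{rem 2.22}, iv), gives $\mathcal{G}^m=\mathcal{I}_{m,ac}$, while Theorem~\ref{thm 5.5} gives $\mathcal{I}_{m,ac}=\mathcal{G}^b_{m,ac}$; together, $\mathcal{G}^m=\mathcal{G}^b_{m,ac}$. Since ``$\mathcal{E}_m$ is irreducible'' means by definition ``$\mathcal{U}$ is $m$-irreducible'', and since $\mathcal{U}$ is $m$-recurrent so that all of i)--iv) of Theorem~\ref{thm 2.26} are equivalent, reading those assertions through this dictionary yields at once the equivalences i)~$\Leftrightarrow$~ii)~$\Leftrightarrow$~iii) of the present theorem.

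For the remaining assertion iv), which enlarges $\mathcal{G}^b_{m,ac}$ to $\mathcal{G}^b$, I would argue purely with convexity. The set $\mathcal{G}^b$ is convex (the defining identity~(\ref{eq 5.1}) and the condition $b_k\in L^2(E,\mu)$ are both stable under convex combinations), it contains $\mathcal{G}^b_{m,ac}$, and $m\in\mathcal{G}^b$. Extremality of $m$ in $\mathcal{G}^b$ thus trivially implies extremality in the subset $\mathcal{G}^b_{m,ac}$, i.e. iv)~$\Rightarrow$~iii). Conversely, if $m=t\mu_1+(1-t)\mu_2$ with $\mu_1,\mu_2\in\mathcal{G}^b$ and $t\in(0,1)$, then $t\mu_1\le m$ and $(1-t)\mu_2\le m$ force $\mu_1\ll m$ and $\mu_2\ll m$, so $\mu_1,\mu_2\in\mathcal{G}^b_{m,ac}$; extremality of $m$ there gives $\mu_1=\mu_2=m$, which is iii)~$\Rightarrow$~iv).

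I do not expect a serious obstacle: once Theorems~\ref{thm 2.26} and~\ref{thm 5.5} are available the argument is bookkeeping. The only points needing a little care are verifying that the recurrence hypothesis of Theorem~\ref{thm 2.26} is met (it is the reason its clause iv), and hence the full chain down to iii), is at our disposal) and the elementary observation that the components of a convex decomposition of $m$ are automatically absolutely continuous with respect to $m$, which is what makes the step iii)~$\Leftrightarrow$~iv) work.
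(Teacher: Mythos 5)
Your proposal is correct and follows essentially the same route as the paper: the equivalences i)--iii) come from Theorem \ref{thm 2.26} combined with Theorem \ref{thm 5.5} (via recurrence of $\mathcal{E}_m$ and the identification $\mathcal{G}^m=\mathcal{I}_{m,ac}=\mathcal{G}^b_{m,ac}$), and iii) $\Leftrightarrow$ iv) is handled exactly as in the paper by noting that the components of any convex decomposition of $m$ within $\mathcal{G}^b$ are automatically absolutely continuous with respect to $m$. Your write-up merely makes explicit some bookkeeping (recurrence check, Remark \ref{rem 2.22}, iv)) that the paper leaves implicit.
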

\begin{proof}
The fact that i), ii), and iii) are equivalent follows by Theorem \ref{thm 5.5} and Theorem \ref{thm 2.26}. 
Clearly iv) implies iii). 
If iii) holds and $m = \alpha m_1 + (1 - \alpha) m_2$ 
with $\alpha \in (0, 1)$ and $m_1, m_2 \in \mathcal{G}^b$, 
then $m_1$ and $m_2$ belong to $\mathcal{G}^b_{m, ac}$, hence $m=m_1=m_2$.
\end{proof}

In the spirit of \cite{RoWa01}, further connections between irreducibility, extremality of Gibbs measures, and functional inequalities can be studied. 
In this sense, we would also like to refer to the classical work of \cite{StZe92a}, \cite{StZe92b}, and \cite{StZe92c}.

\vspace{1mm}

\noindent
{\bf Example} (The non-symmetric case). 
Assume now that there exists a separable real Hilbert space $(H, \langle  , \rangle_H)$ densely and continuously embedded into $E$ and $K$ is an orthonormal basis of $H$. 
By the chain rule, for every $u \in \mathcal{F}C_b^\infty$ and $z \in E$ fixed, $h \mapsto \frac{\partial u}{\partial h}(z)$ is a continuous linear functional on $H$, hence $\nabla u(z) \in H$ is uniqely defined by
\[
\langle \nabla u(z), h\rangle_H = \frac{\partial u}{\partial h} (z), \; h \in H.
\]
Then 
\[
\mathcal{E}_m (u,v) = \frac{1}{2}\int
\langle \nabla u, \nabla v\rangle_H dm.
\]

Let $A$ be a map from $E$ to the space of all bounded linear operators 
denoted by $\mathcal{L}(H)$, such that $z \mapsto \langle A(z)h_1, h_2\rangle_H$ is $\mathcal{B}(E)$-measurable for all $h_1$, $h_2 \in H$. 
Additionally, suppose that there exists $C > 0$ with
\[
\langle A(z)h, h \rangle_H \geq C \|h\|^2_H \; \mbox{ for all} \; h \in H,
\]
and that $\|\widetilde{A}\| \in L^1(E, m)$ and $\|\check{A}\| \in L^\infty(E, m)$, 
where $\widetilde{A} := \frac{1}{2}(A + \hat{A})$, $\check{A} := \frac{1}{2}(A - \hat{A})$ and $\hat{A}(z)$ denotes the adjoint of $A(z)$, $z\in E$.
Let $b, c \in L^\infty(E \rightarrow H, m)$ such that for all $u \in \mathcal{F}C_b^\infty$ it holds
\begin{equation} \label{eq 5.2}
\int \langle b, \nabla u\rangle_H dm, \; \; \; \int \langle c, \nabla u\rangle_Hdm \geq 0. 
\end{equation}
Then by \cite{MaRo92}, Chapter II, Section 3, 
\[
\mathcal{E}'(u, v) := \int \langle A \nabla u, \nabla v\rangle_H dm + \int u \langle b, \nabla v \rangle_H dm + \int \langle c, \nabla u\rangle_H v dm
\]
is a closable densely defined positive bilinear form on $L^2(E, m)$, whose closure is a Dirichlet form.
Moreover, if $J$ is a symmetric finite positive measure on $(E \times E, \mathcal{B} \otimes \mathcal{B})$ such that $(\mathcal{E}_J, \mathcal{F}C_b^\infty)$ given by
\[
\mathcal{E}_J(u,v):=\int \int (u(x) - u(y)) (v(x) - v(y)) J(dxdy),  \; u,v \in \mathcal{F}C_b^\infty, 
\]
is closable, then $(\mathcal{E} := \mathcal{E}' + \mathcal{E}_J, \mathcal{F}C_b^\infty)$ is closable and its closure is a Dirichlet form.
Note that since $1 \in \mathcal{F}C_b^\infty$, $m(E) < \infty$, and $\mathcal{E}(1,1)=0$, by Corollary \ref{coro 5.3} it follows that $\mathcal{E}$, $\mathcal{E}'$, and $\mathcal{E}_J$ are recurrent.

\begin{coro} \label{coro 5.8} 
The following assertions hold.

i) Let $\mathcal{E}_m$ be irreducible (equivalently, let $m$ be extremal in $\mathcal{G}^b$). 
Then $\mathcal{E}$ is irreducible.

\vspace{0.1cm}

ii) Assume that $\int\langle b + c, \nabla u\rangle_Hdm = 0$ for all $u \in \mathcal{F}C_b^\infty$ and there exists $C' > 0$ such that $<Ah,h>_H \leq C' \|h\|_H^2$ for all $h \in H$. 
Then $\mathcal{E}'$ is irreducible if and only if $\mathcal{E}_m$ is irreducible.
\end{coro}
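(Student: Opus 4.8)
Both parts will be derived from Corollary~\ref{coro 5.3}(ii), which — since $m$ is a probability measure (so $m(E)<\infty$) and $\mathcal{E}$, $\mathcal{E}'$, $\mathcal{E}_m$ are recurrent — reduces irreducibility of each of these strongly sectorial forms to the statement that its only zero-energy elements are the constants. The point is that assumption (\ref{eq 5.2}) already forces the first-order part of $\mathcal{E}'$ to be ``divergence free'': since $\mathcal{F}C_b^\infty$ is a vector space, applying (\ref{eq 5.2}) to $u$ and to $-u$ gives $\int\langle b,\nabla u\rangle_H\,dm=\int\langle c,\nabla u\rangle_H\,dm=0$ for every $u\in\mathcal{F}C_b^\infty$; and since $u^2\in\mathcal{F}C_b^\infty$ whenever $u\in\mathcal{F}C_b^\infty$, we get for such $u$
\[
\mathcal{E}'(u,u)=\int\langle A\nabla u,\nabla u\rangle_H\,dm+\frac12\int\langle b+c,\nabla(u^2)\rangle_H\,dm=\int\langle A\nabla u,\nabla u\rangle_H\,dm .
\]
Using $\langle A(z)h,h\rangle_H\ge C\|h\|_H^2$ and $\mathcal{E}_J\ge 0$ this yields, on the common form core $\mathcal{F}C_b^\infty$, the energy domination $\mathcal{E}(u,u)\ge\mathcal{E}'(u,u)\ge C\int\|\nabla u\|_H^2\,dm=2C\,\mathcal{E}_m(u,u)$, and — when in addition $\langle A(z)h,h\rangle_H\le C'\|h\|_H^2$ (the hypothesis of (ii)) — the two-sided bound $2C\,\mathcal{E}_m(u,u)\le\mathcal{E}'(u,u)\le 2C'\,\mathcal{E}_m(u,u)$.

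\emph{Part (i).} By Corollary~\ref{coro 5.3}(ii) applied to $\mathcal{E}$ it suffices to show that $u\in D(\mathcal{E})$ with $\mathcal{E}(u,u)=0$ forces $u$ constant. Choose $u_n\in\mathcal{F}C_b^\infty$ with $u_n\to u$ in $D(\mathcal{E})$; since for a sectorial form the graph norm is equivalent to $(\widetilde{\mathcal{E}}(\cdot,\cdot)+\|\cdot\|_{L^2}^2)^{1/2}$, we get $\mathcal{E}(u_n,u_n)=\widetilde{\mathcal{E}}(u_n,u_n)\to\widetilde{\mathcal{E}}(u,u)=\mathcal{E}(u,u)=0$ and $(u_n)_n$ is $\widetilde{\mathcal{E}}_1$-Cauchy. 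The domination inequality makes $(u_n)_n$ Cauchy for the form $\mathcal{E}_m$ with $\mathcal{E}_m(u_n,u_n)\to 0$, so closedness of $\mathcal{E}_m$ gives $u\in D(\mathcal{E}_m)$ and $\mathcal{E}_m(u,u)=0$. Since $\mathcal{E}_m$ is irreducible (and recurrent), Corollary~\ref{coro 5.3}(ii) forces $u$ constant; hence $\mathcal{E}$ is irreducible. The parenthetical equivalence of irreducibility of $\mathcal{E}_m$ with extremality of $m$ in $\mathcal{G}^b$ is Theorem~\ref{thm 5.6}.

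\emph{Part (ii).} The two-sided bound, valid on the common core $\mathcal{F}C_b^\infty$, shows that the graph norm of $\mathcal{E}'$ (equivalently, that of its symmetric part) and the graph norm of $\mathcal{E}_m$ are equivalent on $\mathcal{F}C_b^\infty$; hence $D(\mathcal{E}')=D(\mathcal{E}_m)$, and by the same Cauchy-sequence/closedness argument as in part (i) the bound extends to this common domain, so that for $u$ in it one has $\mathcal{E}'(u,u)=0$ if and only if $\mathcal{E}_m(u,u)=0$. Both $\mathcal{E}'$ and $\mathcal{E}_m$ are recurrent, so Corollary~\ref{coro 5.3}(ii) applied to each gives: $\mathcal{E}'$ is irreducible $\Leftrightarrow$ every zero-$\mathcal{E}'$-energy element is constant $\Leftrightarrow$ every zero-$\mathcal{E}_m$-energy element is constant $\Leftrightarrow$ $\mathcal{E}_m$ is irreducible.

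\emph{Main obstacle.} The only genuinely technical step is transporting the energy (in)equalities from the form core $\mathcal{F}C_b^\infty$ to the full domains: this uses that $\mathcal{E}$, $\mathcal{E}'$, $\mathcal{E}_J$ and $\mathcal{E}_m$ are, by construction, closures of their restrictions to $\mathcal{F}C_b^\infty$, that sectoriality makes the domain a Hilbert space under the symmetric-part norm, and that $\mathcal{E}_m$ (resp. $\mathcal{E}'$) is closed, so that an $L^2$-limit of a form-Cauchy sequence lies in the domain with the limiting energy. Once this is in place, everything else is a direct bookkeeping application of Corollary~\ref{coro 5.3} and Theorem~\ref{thm 5.6}.
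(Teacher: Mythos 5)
Your argument is correct and is essentially the paper's own proof: both parts are reduced to Corollary \ref{coro 5.3}, ii) by comparing energies on the common core $\mathcal{F}C_b^\infty$ (lower bound $\mathcal{E}\ge\mathcal{E}'\ge 2C\,\mathcal{E}_m$ for part i), two-sided bound $2C\,\mathcal{E}_m\le\mathcal{E}'\le 2C'\,\mathcal{E}_m$ for part ii)) and then transporting these to the full domains via closedness, exactly as in the paper (whose proof of ii) likewise asserts $D(\mathcal{E}')=D(\mathcal{E}_m)$ from the core comparison). One caveat: your opening claim that (\ref{eq 5.2}) alone forces $\int\langle b,\nabla u\rangle_H\,dm=\int\langle c,\nabla u\rangle_H\,dm=0$ for all $u\in\mathcal{F}C_b^\infty$ rests on reading (\ref{eq 5.2}) as holding on the whole vector space; the intended condition (as in \cite{MaRo92}, Chapter II, Section 3, needed for the Dirichlet property) is only for nonnegative $u$, and indeed your reading would render the first hypothesis of part ii) vacuous. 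This does not damage your proof, but the clean formulation is: for i) apply (\ref{eq 5.2}) to $u^2\ge 0$ (using $\nabla(u^2)=2u\nabla u$) to get $\mathcal{E}'(u,u)\ge\int\langle A\nabla u,\nabla u\rangle_H\,dm$, which is all the lower bound needs; for ii) the identity $\mathcal{E}'(u,u)=\int\langle A\nabla u,\nabla u\rangle_H\,dm$ should be derived from the stated assumption $\int\langle b+c,\nabla w\rangle_H\,dm=0$ applied to $w=u^2$ — this is precisely the paper's ``integrating by parts and using the first assumption in ii)''.
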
 

\begin{proof}
i) By the strict ellipticity of $A$ and the relations (\ref{eq 5.2}), it is straightforward to check that $D(\mathcal{E}) \subset D(\mathcal{E}_m)$ and if $u \in D(\mathcal{E})$ such that $\mathcal{E}(u,u)=0$, then $\mathcal{E}_m(u,u)=0$. 
Now, the assertion follows by Corollary \ref{coro 5.3}, ii).

ii) Notice that if $u \in \mathcal{F}C_b^\infty$, then integrating by parts and using the first assumption in ii) we get 
\[
\mathcal{E}'(u,u) =  \int\langle A \nabla u, \nabla u\rangle_H dm.
\]
Therefore, $C \mathcal{E}(u,u) \leq \mathcal{E}'(u,u) \leq C' \mathcal{E}(u,u)$, $D(\mathcal{E}') = D(\mathcal{E})$, and the statement follows by applying Corollary \ref{coro 5.3}, ii).
\end{proof}

\begin{acknowledgements}\label{ackref}
We would like to thank Nicu Boboc for fruitful discussions in the initial phase of this paper. 
\end{acknowledgements}

\affiliationone{
Lucian Beznea\\
Simion Stoilow Institute of Mathematics\\ 
of the Romanian Academy,\\
Research unit No. 2, P.O. Box  1-764,\\
RO-014700 Bucharest, Romania,\\
and University of Bucharest, Faculty\\ of Mathematics and Computer
Science
\email{e-mail: lucian.beznea@imar.ro}}
\affiliationtwo{
Iulian C\^impean\\
Simion Stoilow Institute of Mathematics\\
of the Romanian Academy,\\
Research unit No. 2, P.O. Box  1-764,\\
RO-014700 Bucharest\\
Romania
\email{iulian.cimpean@imar.ro}}
\affiliationthree{
Michael R\"ockner\\
Fakult\"at f\"ur Mathematik, Universit\"at\\
Bielefeld,\\
Postfach 100 131, D-33501 Bielefeld\\
Germany
\email{roeckner@mathematik.uni-bielefeld.de}
}
\end{document}